\theoremstyle{plain} \newtheorem{definisjon}{Definition}[section]
\theoremstyle{plain} \newtheorem{teorem}[definisjon]{Theorem}
\theoremstyle{plain} \newtheorem{lemma}[definisjon]{Lemma}
\theoremstyle{plain} 
\theoremstyle{definition} \newtheorem{remark}[definisjon]{Remark}
\theoremstyle{definition} 
\numberwithin{equation}{section}
\newcommand{\ie}{i.e.~}
\newcommand{\E}{\mathbb{E}}
\newcommand{\Prob}{\mathbb{P}}
\newcommand{\ind}{\mathbf{1}}
\newcommand{\Rr}{{\mathbb{R}_0}}
\newcommand{\RR}{{\mathbb{R}}}
\newcommand{\Tt}{[0,T]}
\newcommand{\Ff}{\mathcal{F}}
\newcommand{\Gg}{\mathcal{G}}
\newcommand{\FF}{\mathbb{F}}
\newcommand{\GG}{\mathbb{G}}
\newcommand{\Bb}{\mathcal{B}}
\newcommand{\Ht}{\tilde{H}}
\newcommand{\Ham}{\mathcal{H}}
\newcommand{\II}{\mathcal{I}}
\newcommand{\UU}{\mathcal{U}}
\newcommand{\LL}{\mathcal{L}}
\newcommand{\ins}{\,}
\newcommand{\minus}{\text{-}}
\DeclareMathOperator{\Hg}{\mathcal{H}^\Gg_2}
\DeclareMathOperator{\Af}{\mathcal{A}^\FF}
\DeclareMathOperator{\Ag}{\mathcal{A}^\GG}
\renewcommand{\theenumi}{\roman{enumi})}
\renewcommand{\labelenumi}{\theenumi}
\begin{document}
\title[BSDE's driven by time-changed L{\'e}vy noises]{BSDEs driven by time-changed L{\'e}vy noises and optimal control}
\date{\today }
\author[Di Nunno]{Giulia Di Nunno}
\address{Giulia Di Nunno: Center of Mathematics for applications, University of Oslo,
PO Box 1053 Blindern, N-0316 Oslo, Norway, and, Norwegian School of Economics and Business Administration, Helleveien 30, N-5045 Bergen, Norway.}
\author[Sjursen]{Steffen Sjursen}
\address{Steffen Sjursen: Centre of Mathematics for Applications , University of Oslo, PO Box 1053 Blindern, N-0316 Oslo, Norway}

\email[]{giulian\@@math.uio.no, s.a.sjursen\@@cma.uio.no}

\begin{abstract}
We study backward stochastic differential equations (BSDE's) for time-changed L{\'e}vy noises when the time-change is independent of the L{\'e}vy process. 
We prove existence and uniqueness of the solution and we obtain an explicit formula for linear BSDE's and a comparison principle. 
BSDE's naturally appear in control problems. Here we prove a sufficient maximum principle for a general optimal control problem of a system driven by a time-changed L{\'e}vy noise. As an illustration we solve the mean-variance portfolio
selection problem.
\end{abstract}

\subjclass[2010]{60H10, 91G80}
\keywords{BSDE, time-change, maximum principle, doubly stochastic Poisson process, conditionally independent increments} 

\maketitle

\section{Introduction}

We establish a framework for the study of backward stochastic differential equations (BSDE's) driven by a conditional Brownian motion and a doubly stochastic Poisson random field. Indeed the structure of these noises can be strongly related to the corresponding time-changed Browian motion and the time-changed Poisson random measure when the time-change is independent of the Brownian motion and Poisson field.

In the framework of the non-anticipating integration for martingale random fields, we prove the existence and uniqueness of the solution of a general BSDE of the form 
\begin{align}
Y_t &= \xi + \int\limits_t^T g_s\big(\lambda_s,Y_s,\phi_s \big) \ins ds - \int\limits_t^T\int\limits_{\mathbb{R}} \phi_s(z)  \ins \mu(ds,dz) \nonumber \\
&= \xi + \int\limits_t^T g_s\big(\lambda_s,Y_s,\phi_s \big) \ins ds - \int\limits_t^T \phi_s(0) \ins dB_s -\int\limits_t^T\int\limits_\Rr \phi_s(z) \ins \tilde{H}(ds,dz) 
\label{eq:BSDE_intro}
\end{align}
where $\mu$ is the mixture of a conditional Brownian measure $B$ on $[0,T]\times \{ 0 \}$ and a centered doubly stochastic Poisson measure $\Ht$ on $[0,T]\times \Rr$ ($\Rr := \RR \setminus \{0\})$. Namely 
\begin{equation}
\mu(\Delta) := B\big( \Delta \cap [0,T]\times \{ 0 \} \big) + \Ht \big( \Delta \cap [0,T]\times \Rr \big),
\end{equation}
for any Borel measurable set $\Delta$ in $[0,T] \times \RR$. Moreover we specifically study linear BSDE's achieving a closed form solution for the process $Y$ and use this solution to obtain a comparison theorem.

These results rely strongly on the stochastic integral representation of square integrable random variables and martingales. In the language of time-change, we can formulate the result as follows: Given the time-change processes $\Lambda^B$ and $\Lambda^H$, %of which we assume knowlegde, 
the complete filtered probability space $(\Omega, \Ff, \Prob, \GG)$ where $\GG$ is the filtration generated by $\mu$ and the whole of $\Lambda^B$ and $\Lambda^H$, any $L^2$-martingale $M$ can be represented as
\begin{equation}
M_t = M_0 + \int\limits_0^t\int\limits_{\mathbb{R}} \phi_s(z) \ins \mu(ds,dz) 
\label{eq:martingale_represention_introduction}
\end{equation}
where $\phi$ is proved to exist and $M_0$ is a random element measurable with respect to $\Lambda^B$ and $\Lambda^H$.

In \cite{Sjursen2011} a detailed study on the structure of the spaces generated by the measure $\Ht$ is carried though achieving chaos decompositions via orthogonal polynomials and also integral representation results of type \eqref{eq:martingale_represention_introduction} in which the integrand is given in closed form via the non-anticipating stochastic derivative in first place and then via Clark-Ocone type formulae and anticipating stochastic derivatives. These results  hold for very general choices of $\Lambda^H$ also beyond the present paper. Here we give an alternative slimmer proof for representation \eqref{eq:martingale_represention_introduction} which will provide only existence of the integrand $\phi$. This is enough for the study of \eqref{eq:BSDE_intro}.
% Here we tailor the study of \eqref{eq:martingale_represention_introduction} to the existence of the BSDE \eqref{eq:BSDE_intro}.

We remark that \eqref{eq:martingale_represention_introduction} shows that martingales $M$ of the type considered do not have a (full) predictable representation property as described in \cite{Bremaud1978,Nualart1995,Revuz1991} since the initial value $M_0$ is not a constant in general. Indeed the predictable representation property depends on the combination of integrator and the information flow. In \cite[Theorem 2.2]{DiNunno2007} it is proved that the predictable property with respect to the class of random measures $\mu$ with independent values \emph{if and only if} $\mu$ is given as a mixture of Gaussian and centered Poisson random measures. % Note that the sufficient statements for the Gaussian and the centered Poisson cases seperately are well known since long, see e.g. \textbf{Cite Ito and Dellacherie}.

The integration and the representation results are developed with respect to the filtration $\GG$, the filtration generated by $\mu$ and \emph{the entire history} of $\Lambda^B$ and $\Lambda^H$. It is with respect to %to the entire history of 
$\Lambda^B$ and $\Lambda^H$ that $H$ and $B$ have %the highly useful 
conditionally independent increments. From the point of view of modeling and applications $\GG$ is not a natural choice of filtration since it includes ``anticipating information'', the future values of $\Lambda^B$ and $\Lambda^H$. However we can still apply our results in the problems related to models where the reference filtration is $\FF$, the smallest right-continouos filtration to which $\mu$ is adapted. Indeed we show sufficient conditions for solving an optimal control problem with a classical performance functional for both $\GG$- and $\FF$-predictable controls. This is achieved by projecting the results obtained for the $\GG$-predictable case onto the $\FF$-predictable one.

\medskip

% In the case of point processes with the stochastic intensity measurable with respect to the filtration generated by the noise, it is also known that the predictable representation property holds, see e.g. \cite{Davis1976,Boel1975,Jacod1975}. However a full characterization of the random integrators for which the predictable representation property holds beyond the cases above is still missing.

The framework proposed based on specific integral representation under $\GG$ is a novel framework for problems related to time-changed processes. 
The work \cite{Lim2005} considers BSDE's with doubly stochastic Poisson processes, where the intensity of the doubly stochastic Poisson process depends on a Brownian motion in a specific way. Our setting does not overlap with that of \cite{Lim2005} due to a different relationship between the noises considered.
Our BSDE also differs from another approach to BSDE's beyond L{\'e}vy processes, \cite{Carbone2008, bobrovnytska2004, Oksendal2010, Jeanblanc2011,Kohlmann2010}, where an extra martingale $N$ is inserted to the backward stochastic differential equation so that $Y$ attains the terminal condition $Y_t = \xi$ and $Y_0$ is a real number. BSDE's with random measures is discussed in \cite{Jianming2000} assuming a martingale representation exist. We however prove the martingale representation and explicitly link the random measures, the martingale representation, and the conditions on the driver. %($g$ in \eqref{eq:BSDE_intro}).

% These time-changed processes are used in credit risk modeling (see e.g. \cite{Grandell1992, Lando1998}) and stochastic volatility models (see e.g. \cite{Barndorff-Nielsen2002,stein1991,Carr2003}). Thus the study of the related BSDE's gives the necessary tools in some relevant aspects of e.g. control and risk management. 

\medskip

Taking a view towards applications we sketch some of the uses of the time-changed L\'evy processes in mathematical finance and the relevance of our BSDE-framework. This is not meant as a comprehensive review. The time-changed L\'evy processes occur in mathematical finance in the modeling of asset prices as follows:
\begin{align}
dS_t &= S_{t\minus} \Big( \int\limits_\RR \psi_t(z) \ins \mu(dt,dz) \nonumber \\
&= S_{t\minus} \Big(  \psi_t(0) \ins dB_t + \int\limits_\Rr \psi_t(z) \ins \Ht(dt,dz) \Big) \quad S_0 >0.
\label{eq:asset_as_timechange}
\end{align}
%
% Examples of type \eqref{eq:asset_as_timechange} include stochastic volatility models like \cite{Barndorff-Nielsen2002,Carr2003,stein1991} and default risk as in \cite{Lando1998}. 

The well-known stochastic exponentiation model of \cite[Section 4.3]{Carr2003}, where stocks are modeled as time-changed pure jump L\'evy processes, can be described in our terminology as
\begin{equation}
S_t= \exp\Big\{ \int\limits_0^t \int\limits_\Rr z \ins \Ht(ds,dz) - \int\limits_0^t\int\limits_\Rr \big[ e^z-1-z \big] \ins \lambda^H_s \nu(dz) ds \Big\} 
\end{equation}
which in differential form is
\begin{equation}
dS_t = S_{t\minus} \Big( \int\limits_\Rr \big(e^z-1 \big) \ins \Ht(ds,dz) \Big).
\label{eq:Carr_model}
\end{equation}
Here the jump measure $\nu$ and time-change intensity $\lambda^H$ determine the properties of $S$.

A popular class of stochastic volatility models with Brownian filtrations including \cite{Barndorff-Nielsen2002,heston1993,Hull1987,stein1991} is
\begin{align}
dS_t &=  \rho S_{t\minus} \ins dt +   \sigma S_{t\minus} \lambda_t^B  \ins dW_t^{(1)} \label{eq:Brownian_stochastic_volatility}  \\
d\lambda^B_t &= M(\lambda_t^B) \ins dt + K(\lambda^B_t) \ins dW_t^{(2)} \label{eq:brownian_volatility}
\end{align}
where $M$ and $K$ are real functions, $\rho, \sigma \in \RR$ and $W^{(1)}$ and $W^{(2)}$ are Brownian motions. Here $S$ is the asset price and $\lambda^B$ the stochastic volatility.
Whenever $W^{(1)}$ and $W^{(2)}$ are \emph{independent}, $B_t:= \int_0^t \lambda_t^B \ins dW_t^{(1)}$ is a conditional independent Brownian motion as in Definition \ref{definisjon:listA} and our framework applies. 
%BSDE's for Brownian filtrations are well treated in the literature (see e.g. \cite{Karoui1997}). However BSDE equations where the noise from the volatility does not appear (in this case $W^{(2)}$) has recieved little attention. 

% but BSDE's in a filtration with time-change has recieved no attention outside \cite{Lim2005} 
%
%. Note however that we can deal with problems where the noise in the volatility is non-gaussian, like introducing jump terms in \eqref{eq:brownian_volatility}.
%
% \smallskip \noindent
% The stochastic volatility model of \cite{stein1991} is in our terminology given by
% \begin{align*}
% dS_t &= S_{t\minus} \sigma \ins dB_t \\
% d\lambda^B &= \alpha ( m - \lambda^B_t )\ins dt + K_2 \sqrt{\lambda^B}_t \ins dW_t 
% \end{align*}
% %
% where $m, K_2, \sigma$ are constants and $W$ a Brownian motion. Remark however that BSDE's for Brownian filtrations are well treated in the literature (see e.g \cite{Karoui1997}).

\smallskip \noindent
In credit risk, the jump times of the doubly stochastic Poisson process are used to signify the occurence of downwards abrupt price movements and default. A classical example \cite{Lando1998} is the case of an integer valued stochastic process $H_t$, $t\in [0,T]$, with $\nu(dz) = \ind_{\{z=1\}}(z)$ and $\lambda^H$ given. Then $\Ht_t = H_t -\Lambda^H_t$. 
% \ie that $H$ can be described as an integer valued stochastic process, $H_t$, $t\in[0,T]$. 
The default time $\tau$ is the first jump of $H$, ie $\tau = \inf_{t}\{ H(t) >0\}$. This is then used to model bonds or derivatives of the form $P \ind_{\tau>T}$, where $P$ is a random variable, so that $P \ind_{\tau>T}$ is a payoff which is received only if there is no default. An example of type \eqref{eq:asset_as_timechange} is the zero coupon bond which can be modeled as
\begin{equation*}
dS_t = S_{t\minus} \Big( \lambda^H_{t\minus} \ins dt -  \ins d \Ht_t \Big), \quad S_0=1, \; \text{ for } t\leq \tau. 
\end{equation*}

\medskip
% Our work is, to the best of our knowlegde, the only one using BSDE's to treat optimization problems of form \eqref{eq:performance_functional} with time-changed L\'evy processes. Furthermore it is, again t

To the best of our knowlegde, the present work is the first to detail BSDE's for time-changed L\'evy processes in general form, which opens up for studies on risk measures and filtration-consistent expectations as in \cite{Gianin2006,Royer2006} via our comparison theorem. 
Moreover we explicitly treat general optimal control problems with time-changed L\'evy processes, see e.g. \eqref{eq:performance_functional}, via the present BSDE. Indeed the BSDE can be used to investigate mean-variance hedging, utility maximization and optimal consumption problems for assets modeled as in \eqref{eq:asset_as_timechange} via Theorems \ref{teorem:optimal_control_GG} and \ref{teorem:optimal_control_FF}. 
% We refer to \cite{Framstad2004} for computational examples from the L\'evy case. 
%In particular, we can do this with time-changed L{\'e}vy processes \emph{with jumps} with very few assumptions on the model for the volatility. 
Utility maximization for time-changed L{\'e}vy processes is studied in \cite{kallsen2010a,kallsen2010b} for the power utility. Mean-variance hedging (for stochastic volatility and credit risk) has been discussed in terms of affine models \cite{Kallsen2009,Kallsen2011} and with BSDEs for general semi-martingales \cite{bobrovnytska2004,Jeanblanc2011,Kohlmann2010}. However \cite{bobrovnytska2004} only consider continuous semi-martingales, \cite{Jeanblanc2011} requires a system of several BSDEs while \cite{Kohlmann2010} requires a martingale representation result which is not true in our setting. 
%For the mean-variance problems we find alternate ways of describing the optimal controls, which in particular leads to an explicit formula for the optimal portfolio to the mean-variance selection problem. While for utility maximization we find a characterization of the optimal control in a more general setting.
% Our claim to novelty is that we find alternate ways of describing the optimal controls in more general settings, which in particular leads to a very explicit description to the mean-variance portfolio problem.
% Carbone2008t

\medskip

The present paper is organized as follows. In the next section the details about the noises considered and the integration framework are set into place. Section 3 is dedicated to the martingale representation type of result while section 4 deals with existence and uniqueness of the solution of the BSDE's \eqref{eq:BSDE_intro}. The study of explicit solutions of linear BSDE's and their applications to prove a comparison theorem is given in section 5. Finally we show a sufficient maximum principle in section 6 and we trace its use in some optimal control problems in section 7. There we study expected utility of the final wealth, for which we find a characterization of the optimal portfolio, and a mean-variance portfolio selection problem for which we give an explicit formula of the optimal portfolio.

\section{The framework}

\subsection{The random measures and their properties}

Let $(\Omega, \Ff, \Prob)$ be a complete probability space and $X := [0,T] \times \RR$, we will consider $X = \big ([0,T] \cup \{0\} \big) \cup \big( [0,T]\times \Rr\big)$, where $\Rr = \mathbb{R}\setminus \{0\}$ and $T>0$. Denote $\Bb_X$ the Borel $\sigma$-algebra on $X$. %and $\Bb_X^c$ is the elements of $\Bb_X$ with compact closure. We recall that $B_X$ is separable and is generated by $\Bb_X^c$.
Throughout this presentation $\Delta \subset X$ denotes an element $\Delta$ in $\Bb_X$. 

Let $\lambda:= (\lambda^B, \lambda^H)$ be a two dimensional stochastic process such that % both $\lambda^B$ and $\lambda^H$ satisfy 
each component $\lambda^l$, $l=B,H$, satisfies
\begin{enumerate}
\item $\lambda_t^l \geq 0$ $\Prob$-a.s. for all $t\in [0,T]$,
% $\mathbb{P} \big( \lambda_t < 0 ) = 0 $ for all $t \in \Tt$.
\label{item:lambda_1}
\item $\lim_{h\to 0} \mathbb{P} \big( \big| \lambda_{t+h}^l-\lambda_t^l \big| \geq \epsilon \big) = 0$ 
for all $\epsilon > 0$ and almost all $t \in \Tt$,
\label{item:lambda_2}
\item %\begin{equation} 
$\E \big[ \int_0^T  \lambda_t^l  \ins dt \big] < \infty, $ % ,\quad \text{for all } C \in \mathcal{B}_\Tt^c.
%\label{eq:lambda_finite_E}
%\end{equation} 
\label{item:lambda_3}
\end{enumerate}
We denote $\LL$ as the space of all processes $\lambda:= (\lambda^B, \lambda^H)$ satisfying \ref{item:lambda_1}-\ref{item:lambda_2}-\ref{item:lambda_3} above.

% (\ie \ref{item:lambda_1}-\ref{item:lambda_2}-\ref{item:lambda_3} also holds for $\lambda^H$). \textbf{Remark that $\lambda$ isn't continouos in probability since the continuity only holds for almost all t}

Define the random measure $\Lambda$ on $X$ by 
% \begin{equation}
% \Lambda(\Delta) = \int\int\limits_{\Delta}  \delta_{0}(dz) \ins  \lambda_t^B \ins dt + \int\int\limits_{\Delta} \ind_{\Rr}(z) \nu(dz) \ins \lambda^H_t \ins dt 
% \Lambda(\Delta) := \int\limits_{\Delta} \ind_{\{0\}}(z) \ins \lambda_t^B dt + \int\limits_\Delta \ind_{\Rr}(z) \ins \nu(dz) \lambda^H_t dt,
% \end{equation}
\begin{equation}
\Lambda(\Delta) := \int\limits_0^T \ind_{\{(t,0) \in \Delta  \}}(t) \ins \lambda_t^B dt + \int\limits_0^T \int\limits_{\Rr} \ind_\Delta (t,z) \ins \nu(dz) \lambda^H_t dt, 
\label{eq:measure_Lambda}
\end{equation}
as the mixture of measures on disjoint sets. Here $\nu$ is a deterministic, $\sigma$-finite measure on the Borel sets of $\Rr$ satisfying 
\begin{equation*}
\int_\Rr z^2 \ins \nu(dz)<\infty. 
\end{equation*}
We denote the $\sigma$-algebra generated by the values of $\Lambda$  by $\Ff^\Lambda$.
% We let $\Ff^\Lambda$ be the $\sigma$-algebra generated by $\Lambda$. 
Furthermore, $\Lambda^H$ denotes the restriction of $\Lambda$ to $[0,T]\times \Rr$ and $\Lambda^B$ the restriction of $\Lambda$ to $[0,T]\times \{0\}$. Hence $\Lambda(\Delta)=\Lambda^B(\Delta\cap[0,T]\times\{0\})+\Lambda^H(\Delta\cap[0,T]\times\Rr)$, $\Delta\subseteq X$. 
Here below we introduce the noises driving \eqref{eq:BSDE_intro}.

% Denote $\Phi$ as the cumulative probability distribution of a standard normal random variable.

\begin{definisjon}
$B$ is a signed random measure on the Borel sets of $\Tt \times \{0\}$ satisfying, %with $\Phi$ being the cumulative probability distribution function of a standard normal random variable,
\begin{enumerate}
\renewcommand{\theenumi}{A\arabic{enumi})}
\item $\Prob\Big( B(\Delta) \leq x \,\Big| \Ff^\Lambda \Big) = \Prob\Big( B(\Delta) \leq x  \,\Big| \Lambda^B(\Delta) \Big) =  \Phi \big(\frac{x}{\sqrt{ \Lambda^B(\Delta)}}\big)$, $x\in\mathbb{R}$, $\Delta \subseteq [0,T]\times\{0\}$,
\label{list:A1}
\item $B(\Delta_1)$ and $B(\Delta_2)$ are conditionally independent given $\Ff^{\Lambda}$ whenever $\Delta_1$ and $\Delta_2$ are disjoint sets. 
\label{list:A2}
\end{enumerate}
Here $\Phi$ stands for the cumulative probability distribution function of a standard normal random variable.

$H$ is a random measure on the Borel sets of $\Tt\times \Rr$ satisfying
\begin{enumerate}
\renewcommand{\theenumi}{A\arabic{enumi})}
\setcounter{enumi}{2}
% \item $\Prob\Big( H(B) = k\Big) = \E \Big[ \frac{\alpha(B)^k}{k!} e^{-\alpha(B)} \Big]$, 
% \label{list:A1}
\item $\Prob\Big( H(\Delta) = k \,\Big|\Ff^\Lambda \Big) = \Prob\Big( H(\Delta) = k \,\Big| \Lambda^H(\Delta) \Big)  =  \frac{\Lambda^H(\Delta)^k}{k!} e^{-\Lambda^H(\Delta)}$, $k\in \mathbb{N}$, $\Delta \subseteq \Tt\times \Rr$,
% \item $\Prob\Big( H(\Delta) = k \,\Big| \alpha(\Delta) \Big) = \Prob\Big( H(\Delta) = k \,\Big| \Ff^{\Lambda^H} \Big) =  \frac{\alpha(\Delta)^k}{k!} e^{-\alpha(\Delta)}$, 
\label{list:A3}
\item $H(\Delta_1)$ and $H(\Delta_2)$ are conditionally independent given $\Ff^{\Lambda}$ whenever $\Delta_1$ and $\Delta_2$ are disjoint sets. 
\label{list:A4}
\end{enumerate}

Furthermore we assume that 
\begin{enumerate}
\renewcommand{\theenumi}{A\arabic{enumi})}
\setcounter{enumi}{4}
\item $B$ and $H$ are conditionally independent given $\Ff^\Lambda$.
\label{list:A5}
\end{enumerate}
\label{definisjon:listA}
\end{definisjon}

% Naturally when considering $B(\Delta)$ and $H(\Delta)$ it is assumed that $\Delta$ is a subset of either $[0,T]\times\{0\}$ or $[0,T]\times\Rr$.

Conditions \ref{list:A1} and \ref{list:A3} mean that conditional on $\Lambda$, $B$ is a Gaussian random measure and  $H$ is Poisson a random measure. % To be specific $\E[ B(\Delta) | \Lambda^B(\Delta) ]$ is normally distributed with mean $0$ and variance $\Lambda^H(\Delta)$ while $\E[ H(\Delta) | \Lambda^H(\Delta) ]$ is Poisson distributed with parameter $\Lambda^H(\Delta)$. 
In particular, if $\lambda^B$ and $\lambda^H$ are deterministic then $B$ is a Wiener process and $H$ is a Poisson random random measure.

We refer to \cite{Grigelions1975} or \cite{Kallenberg1997} for the existence of conditional distributions as in Definition \ref{definisjon:listA}.

Let $\Ht:= H-\Lambda^H$ be the signed random measure given by
\begin{equation*}
\Ht(\Delta) = H(\Delta) - \Lambda^H(\Delta), \quad \Delta \subset \Tt\times \Rr.
\end{equation*}
\begin{definisjon}
We define the signed random measure $\mu$ on the Borel subsets of $X$ by
\begin{equation}
\mu(\Delta) := B\Big(\Delta \cap[0,T]\times \{0\} \Big) + \Ht\Big(\Delta \cap [0,T]\times \Rr\Big), \quad \Delta \subseteq X.
\label{eq:mu_definition}
\end{equation}
\end{definisjon}
Clearly, from \ref{list:A1} we have that the conditional first moment of $B$ is $\E \big[ B(\Delta) \big| \Ff^\Lambda \big] = 0$ and from \ref{list:A3} the conditional first moment of $H$ is $\E \big[ H (\Delta) \big| \Ff^\Lambda \big] = \Lambda^H(\Delta)$ so that $\E \big[ \Ht (\Delta) \big| \Ff^\Lambda \big] = 0$.
Thus
\begin{equation}
\E \big[ \mu (\Delta) \big| \Ff^\Lambda \big] = 0.
\label{eq_first_moment_mu}
\end{equation}
The second conditional moments of $B$ and $\Ht$ are given by 
\begin{align*}
\E \big[ B (\Delta)^2 \big| \Ff^\Lambda \big] &= \Lambda^B(\Delta), \\
\E \big[ \Ht (\Delta)^2 \big| \Ff^\Lambda \big] &= \Lambda^H(\Delta).
\end{align*}
By the conditional independence \ref{list:A2}, \ref{list:A4} and \ref{list:A5} we have
\begin{equation*}
\E \Big[ \mu(\Delta)^2 \Big|  \Ff^\Lambda \Big] = \Lambda(\Delta)
\end{equation*}
and
\begin{equation}
\E\big[ \mu(\Delta_1) \mu(\Delta_2)  \big| \Ff^\Lambda \big] = \E\big[ \mu(\Delta_1)  \big| \Ff^\Lambda \big] \E\big[ \mu(\Delta_2)  \big| \Ff^\Lambda \big] = 0
\label{eq:conditional_independence_easy}
\end{equation}
for $\Delta_1$ and $\Delta_2$ disjoint. Hence $\mu(\Delta_1)$ and $\mu(\Delta_2)$ are conditionally orthogonal.

The random measures $B$ and $H$ are related to a specific form of time-change for Brownian motion and pure jump L{\'e}vy process. More specifically define $B_t := B( [0,t]\times \{ 0\})$, $\Lambda^B_t := \int_0^t \lambda^B_s \ins ds$, $\eta_t := \int_0^t\int_{\Rr} z \ins \Ht(ds,dz)$ and $\hat{\Lambda}^H_t := \int_0^t \lambda_s^H \ins ds$, for $t \in [0,T]$.

We can immediately see the role that the time-change processes $\Lambda^B$ and $\hat{\Lambda}^H$ play, studying the characteristic function of $B$ and $\eta$. In fact, from \ref{list:A1} and \ref{list:A3} we see that the conditional characteristic functions of $B_t$ and $\eta_t$ are given by
\begin{align}
\E \big[ e^{ic B_t} \big| \Ff^\Lambda \big] &= \exp\big\{ \int\limits_0^t \frac{1}{2}c^2  \ins \lambda^B_s ds \big\} = \exp\big\{ \frac{1}{2}c^2 \Lambda_t^B \big\}, \quad c\in \RR
\label{eq:characteristic_B} \\
\E \big[ e^{ic \eta_t} \big| \Ff^\Lambda \big] &= \exp\big\{\int\limits_0^t \int\limits_\Rr \big[e^{icz} -1-icz \big]\ins \nu(dz) \lambda_t^H dt \big\} \nonumber \\
&= \exp\big\{ \big( \int\limits_\Rr \big[e^{icz} -1-icz \big]\ins \nu(dz) \big) \, \hat{\Lambda}_t^H \big\}, \quad c \in \RR.
\label{eq:characteristic_H} 
\end{align}
%
% \textbf{The characteristic function is also useful to easily connect to other works like some of the time-changes described by Carr et al}

Indeed there is a strong connection between the distributions of $B$ and the Brownian motion and between $\eta$ and a centered pure jump L{\'e}vy process with the same jump behavior. The relationship is based on a random distortion of the time scale. The following characterization is due to \cite[Theorem 3.1]{Serfozo1972} (see also \cite{Grigelions1975}). 
\begin{teorem}
Let $W_t$, $t\in \Tt$ be a Brownian motion and $N_t$, $t\in\Tt$ be a centered pure jump L\'evy process with Levy measure $\nu$. Assume that both $W$ and $N$ are \emph{independent} of $\Lambda$. 
Then $B$ satisfies \ref{list:A1}-\eqref{eq:characteristic_B} and \ref{list:A2} if and only if, for any $t\geq 0$,
\begin{equation*}
B_t  \stackrel{d}{=} W_{\Lambda_t^B},
\end{equation*}
and $\eta$ satisfies \ref{list:A3}-\eqref{eq:characteristic_H} and \ref{list:A4} if and only if, for any $t\geq 0$,
\begin{equation*}
\eta_t \stackrel{d}{=} N_{\hat{\Lambda}^H_t}. 
\end{equation*}
\end{teorem}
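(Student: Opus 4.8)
The plan is to prove both equivalences by conditioning on $\Ff^\Lambda$ and comparing conditional finite-dimensional characteristic functions. The key observation is that since $W$ and $N$ are independent of $\Lambda$, conditioning on $\Ff^\Lambda$ freezes the time-changes $t\mapsto \Lambda^B_t$ and $t\mapsto \hat{\Lambda}^H_t$ into deterministic, continuous, nondecreasing clocks, while leaving $W$ and $N$ with their usual independent-increment structure. Since the law of such a process is determined by the joint characteristic functions of its increments along finite partitions, and since (by the cited existence of regular conditional distributions, \cite{Grigelions1975,Kallenberg1997}) both $B$ and $W_{\Lambda^B}$ admit regular conditional laws given $\Ff^\Lambda$, it suffices to show that these conditional laws coincide; the unconditional equality $\stackrel{d}{=}$, understood as equality of the joint law of the noise together with $\Lambda$, then follows by integrating against the law of $\Lambda$.

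For the Gaussian part, fix a partition $0=t_0<\dots<t_n$ and set $\Delta_i := (t_{i-1},t_i]\times\{0\}$, so the $\Delta_i$ are disjoint with $\Lambda^B(\Delta_i)=\Lambda^B_{t_i}-\Lambda^B_{t_{i-1}}$. Assuming \ref{list:A1}, \eqref{eq:characteristic_B} and \ref{list:A2}, the conditional independence \ref{list:A2} makes $B(\Delta_1),\dots,B(\Delta_n)$ conditionally independent given $\Ff^\Lambda$, while \ref{list:A1} makes each conditionally centered Gaussian with variance $\Lambda^B(\Delta_i)$; hence their conditional joint characteristic function factorizes as $\prod_{i}\exp\{-\tfrac12 c_i^2(\Lambda^B_{t_i}-\Lambda^B_{t_{i-1}})\}$. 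On the other hand, because $W$ has independent, stationary, centered Gaussian increments and the clock $\Lambda^B$ is deterministic given $\Ff^\Lambda$, the increments $W_{\Lambda^B_{t_i}}-W_{\Lambda^B_{t_{i-1}}}$ are conditionally independent and centered Gaussian with the same variances, giving the identical conditional joint characteristic function. Thus the conditional finite-dimensional laws agree, which proves the forward (``only if'') implication; the ``if'' implication is obtained by reading the conditional law of $W_{\Lambda^B}$ off the same computation and transferring it to $B$ through the assumed equality in joint law.

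The jump part runs in parallel, with the L\'evy-Khintchine exponent replacing the Gaussian one. Assumptions \ref{list:A3} and \ref{list:A4} state precisely that, conditionally on $\Ff^\Lambda$, $H$ is a Poisson random measure with intensity $\nu(dz)\lambda^H_t\,dt$ (independent Poisson-distributed values over disjoint sets with the stated means characterize such a measure). Consequently the compensated integrals $\eta_t-\eta_s=\int_s^t\int_\Rr z\,\Ht(du,dz)$ are, conditionally, independent over disjoint time intervals and carry the exponential/L\'evy-Khintchine conditional characteristic function $\exp\{(\hat{\Lambda}^H_t-\hat{\Lambda}^H_s)\int_\Rr[e^{icz}-1-icz]\,\nu(dz)\}$ recorded in \eqref{eq:characteristic_H}. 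Since $N$ is a centered pure-jump L\'evy process with L\'evy measure $\nu$, its increments over $[\hat{\Lambda}^H_s,\hat{\Lambda}^H_t]$ carry exactly this exponent once the clock is frozen; matching conditional finite-dimensional characteristic functions and integrating over $\Lambda$ then yields $\eta_t\stackrel{d}{=}N_{\hat{\Lambda}^H_t}$, and conversely.

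The main obstacle I anticipate is not the characteristic-function bookkeeping but the measure-theoretic justification surrounding the conditioning: establishing that regular conditional distributions given $\Ff^\Lambda$ exist and that equality of these conditional laws upgrades to equality of the genuine (joint) laws of the processes. This is also where the precise meaning of $\stackrel{d}{=}$ matters, since the ``only if'' direction, which must recover the conditional statements \ref{list:A1} and \ref{list:A3}, requires the equality to hold jointly with $\Lambda$ rather than merely marginally in $B_t$ or $\eta_t$. I would therefore phrase the equivalences at the level of conditional laws given $\Ff^\Lambda$ and invoke \cite{Serfozo1972,Grigelions1975} for the existence and transfer of these conditional distributions.
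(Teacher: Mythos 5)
The paper offers no proof to compare yours against: the theorem is imported as a known characterization, attributed to \cite[Theorem 3.1]{Serfozo1972} (see also \cite{Grigelions1975}). Judged on its own merits, your argument is the standard one and is essentially correct: conditionally on $\Ff^\Lambda$, assumptions \ref{list:A1} and \ref{list:A2} (resp.\ \ref{list:A3} and \ref{list:A4}) pin down the conditional finite-dimensional laws of $B$ (resp.\ of $\eta$) as those of a process with conditionally independent, centered Gaussian (resp.\ compensated Poisson integral) increments governed by the frozen clock, while independence of $W$ and $N$ from $\Lambda$ yields exactly the same conditional laws for $W_{\Lambda^B}$ and $N_{\hat{\Lambda}^H}$, and integrating over the law of $\Lambda$ finishes the forward implication. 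Your closing caveat is not cosmetic bookkeeping but the mathematical heart of the converse, and you are right to insist on it: read literally as equality of one-dimensional marginal laws, the ``if'' direction is false --- e.g.\ with $\lambda^B\equiv 1$ the signed measure $B(dt)=\tfrac{Z}{2\sqrt{t}}\,dt$ with $Z\sim N(0,1)$ gives $B_t=Z\sqrt{t}\sim N(0,t)$ for every $t$, matching $W_t$ marginally, yet violates \ref{list:A1} on general sets and \ref{list:A2} entirely. The equivalence holds only when $\stackrel{d}{=}$ is understood, as you propose and as \cite{Serfozo1972} does, at the level of conditional laws given the time-change (equivalently, joint laws with $\Lambda$). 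Two minor points you should make explicit in a final write-up: \ref{list:A2} and \ref{list:A4} are stated for pairs of disjoint sets but must be read as mutual conditional independence over any finite disjoint family for your product factorization of the conditional characteristic function to be licensed; and your exponent $-\tfrac12 c_i^2(\Lambda^B_{t_i}-\Lambda^B_{t_{i-1}})$ is the correct Gaussian one --- the displayed formula \eqref{eq:characteristic_B} in the paper carries a sign typo, so the apparent mismatch is not an error on your side.
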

In addition, $B$ is infinitely divisible if $\Lambda^B$ is infinitely divisible and $\eta$ is infinitely divisible if $\hat{\Lambda}^H$ is infinitely divisible, see \cite[Theorem 7.1]{Barndorff-Nielsen2006}.

\subsection{Stochastic non-anticipating integration}
\label{section:integration}

Let us define $\FF^{\mu}= \{\Ff_t^{\mu}$,\; $t\in [0,T]\}$ as the filtration generated by $\mu(\Delta)$, $\Delta\subset [0,t]\times \RR$. In view of \eqref{eq:mu_definition}, \ref{list:A1}, and \ref{list:A3} we can see, that for any $t\in [0,T]$, 
\begin{equation*}
\Ff_t^\mu = \Ff_t^B \vee \Ff_t^H \vee \Ff_t^\Lambda, 
\end{equation*}
where $\Ff_t^B$ is generated by $B(\Delta\cap [0,T]\times \{0\})$, $\Ff_t^H$ by $H(\Delta\cap [0,T]\times \Rr)$, and $\Ff^\Lambda_t$ by $\Lambda(\Delta)$, $\Delta \in [0,t]\times \RR$. This is an application of \cite[Theorem 1]{Winkel2001} and \cite[Theorem 2.8]{Sjursen2011}. Set $\FF= \{\Ff_t$,\; $t\in [0,T]\}$ where
\begin{equation*}
\Ff_t = \bigcap_{r>t} \Ff^{\mu}_r 
\end{equation*}
Furthermore, we set $\GG=\{\Gg_t,\; t\in [0,T]\}$ where $\Gg_t=\Ff_t^{\mu}\vee \Ff^{\Lambda}$.
Remark that $\Gg_T = \Ff_T$, $\Gg_0=\Ff^\Lambda$, while $\Ff_0^{\mu}$ is trivial. From now on we set $\Ff=\Ff_T$.

\begin{lemma}
The filtration $\GG$ is right-continouos. 
\end{lemma}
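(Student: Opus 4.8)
The plan is to prove the nontrivial inclusion $\Gg_{t+}\subseteq\Gg_t$ for each $t\in[0,T)$ (the reverse inclusion is automatic since $\GG$ is increasing), exploiting that, conditionally on $\Ff^\Lambda$, both $B$ and $H$ have independent increments. First I reduce the claim to a statement about conditional expectations: it suffices to find a family $\mathcal{M}$ of bounded random variables, closed under multiplication and generating $\Gg_T=\Ff$, such that $\E[Z\mid\Gg_{t+}]=\E[Z\mid\Gg_t]$ for all $Z\in\mathcal{M}$. A functional monotone class argument then extends this identity to every bounded $\Ff$-measurable $Z$; taking $Z=\ind_A$ for $A\in\Gg_{t+}$ gives $\ind_A=\E[\ind_A\mid\Gg_{t+}]=\E[\ind_A\mid\Gg_t]$, so $A\in\Gg_t$ by completeness of the filtration. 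As generating family I would take products $Z=F\,G$ with $F$ bounded and $\Ff^\Lambda$-measurable and $G=\exp\{i\int_0^T g_r\,dB_r+i\int_0^T\int_\Rr h_r(z)\,\Ht(dr,dz)\}$ for real bounded simple functions $g,h$; recalling $\Gg_T=\Ff_T^B\vee\Ff_T^H\vee\Ff^\Lambda$, these products generate $\Gg_T$.

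Second, fix such a $Z=FG$ and $s\in(t,T]$. Since $F$ is $\Ff^\Lambda$-measurable, hence both $\Gg_t$- and $\Gg_{t+}$-measurable, it factors out of both conditional expectations, so it is enough to handle $G$. Write $G=G^{[0,s]}\,G^{(s,T]}$, splitting the stochastic integrals at time $s$; the factor $G^{[0,s]}$ is $\Gg_s$-measurable. By the conditional independence of increments over disjoint sets, \ref{list:A2}, \ref{list:A4} and \ref{list:A5}, the factor $G^{(s,T]}$ is conditionally independent of $\Ff_s^\mu$ given $\Ff^\Lambda$, whence
\[
\E\big[G^{(s,T]}\,\big|\,\Gg_s\big]=\E\big[G^{(s,T]}\,\big|\,\Ff^\Lambda\big]=:\Psi_s,
\]
where $\Psi_s$ is $\Ff^\Lambda$-measurable and computed explicitly from the conditional characteristic functions \eqref{eq:characteristic_B} and \eqref{eq:characteristic_H}:
\[
\Psi_s=\exp\Big\{-\tfrac12\int_s^T g_r^2\,\lambda_r^B\,dr+\int_s^T\int_\Rr\big(e^{ih_r(z)}-1-ih_r(z)\big)\,\nu(dz)\,\lambda_r^H\,dr\Big\}.
\]
Consequently $\E[G\mid\Gg_s]=G^{[0,s]}\,\Psi_s$, and in particular $\E[G\mid\Gg_t]=G^{[0,t]}\,\Psi_t$.

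Third, I let $s\downarrow t$. By the reverse martingale convergence theorem applied to the decreasing family $(\Gg_s)_{s>t}$, whose intersection is $\Gg_{t+}$, one has $\E[G\mid\Gg_{t+}]=\lim_{s\downarrow t}G^{[0,s]}\Psi_s$. The integrability assumption \ref{item:lambda_3} makes the time integrals in $\Psi_s$ absolutely continuous in $s$, so $\Psi_s\to\Psi_t$; and $G^{[0,s]}\to G^{[0,t]}$ in probability because the conditional variance of the added increment, $\int_t^s g_r^2\lambda_r^B\,dr+\int_t^s\int_\Rr h_r(z)^2\,\nu(dz)\lambda_r^H\,dr$, tends to $0$. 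As both factors are bounded, the product converges to $G^{[0,t]}\Psi_t=\E[G\mid\Gg_t]$, which is $\Gg_t$-measurable, yielding $\E[G\mid\Gg_{t+}]=\E[G\mid\Gg_t]$ and hence the desired identity on $\mathcal{M}$. The main obstacle is the middle step: making precise that, conditionally on $\Ff^\Lambda$, the innovation of $\mu$ on $(t,s]$ is independent of $\Gg_s$ and vanishes as $s\downarrow t$. This is exactly where the conditionally independent-increment structure of Definition \ref{definisjon:listA} is essential; once it is in place, the reverse-martingale limit and the monotone-class extension are routine.
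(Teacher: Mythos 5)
Your proof is correct and follows precisely the ``classical argument from the L\'evy case'' that the paper itself invokes: the paper's proof is a one-line citation to Applebaum's Theorem 2.1.9, whose proof is exactly your scheme of conditional characteristic exponentials, (conditionally) independent increments, reverse martingale convergence as $s\downarrow t$, and a monotone class argument, here adapted by conditioning on $\Ff^\Lambda$ throughout. In short, you have supplied the details the paper deliberately omits, using the same approach.
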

\begin{proof}
This can be shown using classical arguments from the L\'evy case (as in e.g. \cite[Theorem 2.1.9]{Applebaum2004}).
\end{proof}

For $\Delta \subset (t,T]\times \RR$, the conditional independence \ref{list:A2}, \ref{list:A4} means that
\begin{equation}
\E\big[ \mu(\Delta) \ins \big| \Gg_t \big] = \E\big[ \mu(\Delta) \ins \big| \Ff_t \vee \Ff^\Lambda \big] = \E\big[ \mu(\Delta) \ins \big| \Ff^\Lambda \big] =0.
\label{eq:conditional_independence_explained}
\end{equation}
Thus $\mu$ has the martingale property with respect to $\GG$ from \eqref{eq_first_moment_mu}.
% \begin{equation}
% \E\Big[ \mu(\Delta) \Big| \Ff_t \Big] = \E\Big[ E [ \mu(\Delta) | \Gg_t ]  \Big| \Ff_t \Big] = 0, \quad \Delta \subset (t,T]\times \RR
% \label{eq:mu_martingale_property}
% \end{equation}
%
Hence $\mu$ is a martingale random field with respect to $\GG$ in the sense of \cite{DiNunno2010} since 
\begin{itemize}
\item $\mu$ has a $\sigma$-finite variance measure 
\begin{equation*}
m(\Delta) := \E \big[ \mu(\Delta)^2] =  \E \big[ \Lambda(\Delta)],
\end{equation*}
\item $\mu$ is $\GG$-adapted, % to both $\GG$ and $\FF$,
\item $\mu$ has conditionally orthogonal values, if $\Delta_1, \Delta_2 \subset (t,T]\times \RR$ such that $\Delta_1 \cap \Delta_2 = \emptyset$ then, combining the arguments in \eqref{eq:conditional_independence_easy} and \eqref{eq:conditional_independence_explained},
\begin{equation}
\E \Big[ \mu(\Delta_1) \mu(\Delta_2) \ins \Big| \Gg_t \Big] = \E \Big[ \mu(\Delta_1) \ins\Big| \Ff^\Lambda \Big] \E \Big[ \mu(\Delta_2)\ins \Big| \Ff^\Lambda \Big] =0.
\label{eq_mu:conditionally_orthogonal_values} 
\end{equation}
\end{itemize}
Denote $\II$ as the subspace of $L^2(\Tt\times \RR \times \Omega, \Bb_X \times \Prob, \Lambda\times \Prob)$ of the random fields admitting a $\GG$-predictable modification, in particular % $\phi: \Omega \times [0,T]\times Z \to \mathbb{R}$ such that
\begin{equation}
\| \phi \|_{\II} :=  \Big( \E \Big[ \int\limits_0^T \phi_s(0)^2 \ins \lambda_s^B ds + \int\limits_0^T\int\limits_\Rr \phi_s(z)^2 \ins \nu(dz) \lambda^H_s ds \Big] \Big) ^{\frac{1}{2}} < \infty.
\label{eq:II_defined}
\end{equation}
For any $\phi \in \II$ we define the (It\^o type) non-anticipative stochastic integral $I: \II \Rightarrow L^2(\Omega, \Ff, \Prob)$ by
\begin{equation*}
I(\phi ) := \int\limits_0^T \phi_s(0) \ins dB_s + \int\limits_0^T\int\limits_\Rr \phi_s(z) \ins \Ht(ds,dz).
\end{equation*}
We refer to \cite{DiNunno2010} for details on the integration with respect to martingale random fields of the type discussed here. In particular, $I$ is a linear isometric operator:
\begin{equation}
\Big( E\big[ I(\phi)^2 \big]\Big) ^{\frac{1}{2}} = \|I(\phi) \|_{L^2(\Omega, \Ff, \Prob)} = \| \phi \|_{\II}.
\label{eq:isometry_general}
\end{equation}
Because of the structure of the filtration considered we have:
% An unusual property of these integrals is that they allow the interchange of integration and random variables in the following sense:

\begin{lemma}
Consider $\xi \in L^2\big(\Omega,\mathcal{F}^\Lambda,\mathbb{P}\big)$ and $\phi \in \mathcal{I}$. Then
\begin{equation*}
\xi I(\phi) = I( \xi\phi),
\end{equation*}
whenever either side of the equality exists as an element in $L^2\big(\Omega,\mathcal{F},\mathbb{P}\big)$.
\label{lemma:f_alpha_and integration}
\end{lemma}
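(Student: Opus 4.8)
The plan is to exploit the single structural fact that $\Ff^\Lambda = \Gg_0$, so that $\xi$ is $\Gg_t$-measurable for \emph{every} $t\in\Tt$. Multiplying a $\GG$-predictable integrand by such a $\xi$ therefore again yields a $\GG$-predictable random field, and on elementary integrands the integral commutes with this multiplication; everything else is a limiting argument built on the isometry \eqref{eq:isometry_general}.

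First I would verify the identity on elementary integrands. For $\phi$ of the form $\phi_s(z) = \sum_{i} a_i \ins \ind_{(t_i,t_{i+1}]}(s)\ind_{A_i}(z)$ with each $a_i$ bounded and $\Gg_{t_i}$-measurable, one has $I(\phi) = \sum_i a_i \ins \mu\big((t_i,t_{i+1}]\times A_i\big)$. For bounded $\xi \in L^2(\Omega,\Ff^\Lambda,\Prob)$ the coefficients $\xi a_i$ are again $\Gg_{t_i}$-measurable (because $\Ff^\Lambda \subseteq \Gg_{t_i}$), so $\xi\phi$ is again an elementary integrand and $I(\xi\phi) = \sum_i \xi a_i \ins \mu(\cdots) = \xi I(\phi)$ by linearity. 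For bounded $\xi$ and arbitrary $\phi\in\II$ I would then pass to the limit: choosing elementary $\phi^{(n)}\to\phi$ in $\|\cdot\|_{\II}$, boundedness of $\xi$ gives $\|\xi\phi - \xi\phi^{(n)}\|_{\II}\le \|\xi\|_\infty\|\phi-\phi^{(n)}\|_{\II}\to 0$, so by the isometry $I(\xi\phi^{(n)})\to I(\xi\phi)$ in $L^2$, while simultaneously $\xi I(\phi^{(n)})\to \xi I(\phi)$ in $L^2$. Since $I(\xi\phi^{(n)})=\xi I(\phi^{(n)})$ from the previous step, the identity holds for all bounded $\xi$.

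Finally I would remove the boundedness assumption by truncation, and this is where the ``whenever either side exists'' clause must be handled carefully. Set $\xi_N := \xi \ins \ind_{\{|\xi|\le N\}}$, so $I(\xi_N\phi)=\xi_N I(\phi)$ and, by the isometry, $\|\xi_N\phi\|_{\II}=\|\xi_N I(\phi)\|_{L^2}$. If $I(\xi\phi)$ exists, i.e.\ $\xi\phi\in\II$, dominated convergence in $\|\cdot\|_{\II}$ gives $\xi_N\phi\to\xi\phi$, hence $\xi_N I(\phi)=I(\xi_N\phi)\to I(\xi\phi)$ in $L^2$; since also $\xi_N I(\phi)\to \xi I(\phi)$ almost surely, the two limits coincide, so $\xi I(\phi)\in L^2$ and equals $I(\xi\phi)$. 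Conversely, if $\xi I(\phi)\in L^2$, then $\xi_N I(\phi)\to \xi I(\phi)$ in $L^2$ by dominated convergence, so $\|\xi_N\phi\|_{\II}=\|\xi_N I(\phi)\|_{L^2}$ stays bounded; monotone convergence (in $N$, through $\xi_N^2\uparrow\xi^2$ inside \eqref{eq:II_defined}) then forces $\|\xi\phi\|_{\II}<\infty$, i.e.\ $\xi\phi\in\II$, and the first case applies.

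The main obstacle is precisely this last paragraph: making the two notions of ``existence'' compatible requires transferring convergence back and forth between the $\|\cdot\|_{\II}$-norm and the $L^2$-norm through the isometry, and controlling it by monotone and dominated convergence rather than by a single a priori bound. The algebraic heart of the statement, by contrast, reduces entirely to the inclusion $\Ff^\Lambda\subseteq\Gg_t$ used in the elementary step.
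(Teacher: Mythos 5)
Your proposal is correct and follows essentially the same route as the paper: verify the identity for bounded $\xi$ and elementary $\phi$ using $\Ff^\Lambda\subseteq\Gg_t$, then extend by the isometry \eqref{eq:isometry_general}. The paper compresses all of your second and third paragraphs into the single sentence ``The general case follows by taking limits,'' so your careful truncation argument handling the ``whenever either side exists'' clause is simply an explicit version of what the paper leaves implicit.
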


\begin{proof}
Assume that $\xi$ is bounded and $\phi \in \II$ is simple, \ie 
\begin{equation*}
\phi_s(z,\omega) = \sum_{j=1}^J \phi_j(\omega) \ind_{\Delta_j}(s,z),
\end{equation*}
where, for $j=1,\dots J$, we have $\Delta_j = (d_j,u_j]\times Z_j$, $0\leq d_j\leq u_j$, $Z_j\subseteq \RR$.
Then 
\begin{equation*}
\xi I(\phi)  =  \xi \sum_{j=1}^{J} \phi_{j} \mu(\Delta_j) =  \sum_{j=1}^{J} \xi \phi_{j} \mu(\Delta_j) = I(\xi \phi),
\end{equation*}
% \begin{align*}
% \xi I(\phi)  &=  \xi \sum_{j=1}^{J} \phi_{j} \mu(\Delta_j) \\
% &=   \sum_{j=1}^{J} \xi \phi_{j} \mu(\Delta_j) \\
% &= I(\xi \phi).
% \end{align*}
where $\xi \phi_j$ is $\Gg_{d_j}$-measurable since $\xi$ is $\Ff^\Lambda$-measurable. The general case follows by taking limits.
\end{proof}

\begin{remark}
% Let $\mathbb{F} = \Ff_t , t\in [0,T]$ be the filtration generated by $\mu(\Delta)$, $\Delta \in [0,t]\times \RR$. 
The random field $\mu$ is also a martingale random field with respect to $\FF$ and integration can be done as for $\GG$. However, results such as Lemma \ref{lemma:f_alpha_and integration} and the forthcoming representation would not hold. See also \cite[Remark 4.4]{Sjursen2011}. %\textbf{Some reference to why not maybe}%(In fact it can be shown that no such representation will exist for $\FF$, se
\label{remark:about_F}
\end{remark}

\section{Integral and martingale representation theorems}

In this section we prove an integral representation theorem for a random variable $\xi\in L^2(\Omega, \Ff,\Prob)$ in the setting described above. % in the It\^o type non-anticipating stochastic integral setting with respect to the filtration $\GG$. 
We freshly prove this result here for the sake of completeness. There are other similar results in the literature available. We refer for example to \cite[Theorem III.4.34]{Jacod2003}. See Remark \ref{remark:integral_representations} for further details.

Recall that $\Gg_T = \Ff_T$. Here we remark that $\Ff_T = \sigma\{ \mu(\Delta), \; \Delta \subseteq X\} =\sigma\{I(\phi),\; \phi\in\II\}$ (indeed $\mu(\Delta)=I(\ind_{\Delta})$). Denote $\mathcal{K} := \{\phi \in \mathcal{I} \big| \phi $ is $\Ff^\Lambda$-measurable, $ \phi \ind_{\Rr}$ is bounded a.e., and $ \int_0^T \int_\RR \phi_s(z)^2\ins \Lambda(ds,dz) $ is a bounded random variable$\}$. 

\begin{lemma}
For any $\phi \in \mathcal{K}$ we have
\begin{align*}
\exp\{I(\phi)\} &\in L^2(\Omega, \Ff, \Prob), \quad\quad \text{and} \\
\frac{\exp\{I(\phi)\}}{ \E\big[ \exp\{I(\phi) \} \big| \Ff^\Lambda \big]}     & \in L^2(\Omega, \Ff, \Prob).
\end{align*}
Furthermore, the random variables $\{e^{I(\phi)}, \phi \in \mathcal{K}\}$ form a total subset of $L^2(\Omega, \Ff, \Prob)$.
%\label{lemma:k_exponentials}
% \end{lemma}
\label{lemma:exponentials_dense}
\end{lemma}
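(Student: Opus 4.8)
The plan is to dispatch the two membership statements by a direct moment estimate based on the conditional Laplace transform of $I(\phi)$, and to treat the totality statement by an orthogonality argument reduced to a (conditional) characteristic-function computation.

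\textbf{The two membership statements.} First I would compute, for $\phi\in\mathcal K$ and $c\in\RR$, the conditional Laplace transform $\E[e^{cI(\phi)}\mid\Ff^\Lambda]$. Conditionally on $\Ff^\Lambda$ the integrand $\phi$ is deterministic, while $B$ is Gaussian by \ref{list:A1}, $H$ is Poisson by \ref{list:A3}, and the two parts are conditionally independent by \ref{list:A5}; this yields $\E[e^{cI(\phi)}\mid\Ff^\Lambda]=\exp\{\frac{c^2}{2}\int_0^T\phi_s(0)^2\lambda_s^B\,ds+\int_0^T\int_\Rr(e^{c\phi_s(z)}-1-c\phi_s(z))\,\nu(dz)\lambda_s^H\,ds\}$, which is the real-exponent analogue of the conditional characteristic functions already displayed for $B$ and $\eta$. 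Taking $c=2$ and using that $\phi\ind_\Rr$ is bounded a.e., the elementary estimate $e^{2u}-1-2u\le C u^2$ for $|u|$ below the a.e.\ bound of $\phi\ind_\Rr$ shows the exponent is dominated by a constant times $\int_0^T\int_\RR\phi_s(z)^2\,\Lambda(ds,dz)$, a bounded random variable by definition of $\mathcal K$. Hence $\E[e^{2I(\phi)}\mid\Ff^\Lambda]$ is a.s.\ bounded and, taking expectations, $\exp\{I(\phi)\}\in L^2$. For the second statement I would take $c=1$: the same bounds control the exponent of $\E[e^{I(\phi)}\mid\Ff^\Lambda]$ in absolute value, so this $\Ff^\Lambda$-measurable quantity is squeezed between two positive deterministic constants, whence $\E[(\exp\{I(\phi)\}/\E[e^{I(\phi)}\mid\Ff^\Lambda])^2]=\E[\E[e^{2I(\phi)}\mid\Ff^\Lambda]/(\E[e^{I(\phi)}\mid\Ff^\Lambda])^2]$ is finite.

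\textbf{Totality.} I would show that the orthogonal complement of the span is trivial: assume $\zeta\in L^2(\Omega,\Ff,\Prob)$ satisfies $\E[\zeta\,e^{I(\phi)}]=0$ for all $\phi\in\mathcal K$ and prove $\zeta=0$. Since $\Ff=\Ff_T=\sigma\{\mu(\Delta):\Delta\subseteq X\}$, it suffices to prove $\E[\zeta\mid\sigma(\mu(\Delta_1),\dots,\mu(\Delta_n))\vee\Ff^\Lambda]=0$ for every finite family of pairwise disjoint sets $\Delta_1,\dots,\Delta_n$ with $\Lambda(\Delta_j)<\infty$ a.s., since such conditionings generate $\Ff$ as the family is refined. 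To stay inside $\mathcal K$ despite the randomness of $\Lambda(\Delta_j)$, I would use $\Ff^\Lambda$-measurable coefficients, taking $\phi=\sum_j a_j f_j\ind_{\Delta_j}$ with $a_j\in\RR$ and $f_j:=(1+\Lambda(\Delta_j))^{-1/2}$; then $\int_0^T\int_\RR\phi_s^2\,\Lambda(ds,dz)=\sum_j a_j^2 f_j^2\Lambda(\Delta_j)\le\sum_j a_j^2$, so $\phi\in\mathcal K$, and Lemma \ref{lemma:f_alpha_and integration} gives $I(\phi)=\sum_j a_j f_j\,\mu(\Delta_j)$.

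The heart of the matter is then a characteristic-function computation. By the first estimate and Cauchy--Schwarz, $a\mapsto\E[\zeta\,e^{\sum_j a_j f_j\mu(\Delta_j)}]$ is finite on $\RR^n$ and extends to an entire function on $\mathbb C^n$ (differentiation under the expectation being justified by the conditional Laplace-transform bounds, which control all exponential moments). It vanishes on $\RR^n$ by hypothesis, hence identically by the identity theorem, and evaluating at $a=i\theta$ yields $\E[\zeta\,e^{i\sum_j\theta_j f_j\mu(\Delta_j)}]=0$ for all $\theta\in\RR^n$. Using the freedom to insert bounded $\Ff^\Lambda$-measurable weights into the $f_j$ (again preserving membership in $\mathcal K$) and conditioning on $\Ff^\Lambda$, I would identify the conditional Fourier transform of the $\zeta$-weighted conditional law of $(\mu(\Delta_1),\dots,\mu(\Delta_n))$ given $\Ff^\Lambda$ and invoke uniqueness of characteristic functions to conclude that this complex conditional measure vanishes, i.e.\ $\E[\zeta\mid\sigma(\mu(\Delta_j):j)\vee\Ff^\Lambda]=0$, and therefore $\zeta=0$.

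I expect the passage from unconditional orthogonality to a clean \emph{conditional} characteristic-function statement to be the main obstacle. Conditioning through an indicator $\ind_A$, $A\in\Ff^\Lambda$, introduces the spurious constant $\E[\zeta\ind_{A^c}]$ because $e^{I(\ind_A\phi)}=\ind_A e^{I(\phi)}+\ind_{A^c}$, so care is needed to isolate a genuinely mean-zero conditional identity. I would handle this either by working throughout with the strictly positive normalisers $f_j$ so that no additive constant ever appears, or by first establishing $\E[\zeta\,e^{I(\phi)}\mid\Ff^\Lambda]=\E[\zeta\mid\Ff^\Lambda]$ and then removing the constant using that the conditional law of a suitable coordinate $\mu(\Delta_j)$ has no atom at the origin (immediate in the Gaussian case $\Lambda^B(\Delta_j)>0$, and obtained by refining the jump sets in the purely atomic case). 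The remaining ingredients---the conditional Laplace transforms, Lemma \ref{lemma:f_alpha_and integration}, and the identity theorem---are routine once this reduction is secured.
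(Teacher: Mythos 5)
Your proposal takes a genuinely different route from the paper, for the simple reason that the paper does not prove this lemma at all: it disposes of all three claims by citing \cite{Yablonski2007} (Lemmas 4, 6 and 9 there, for processes with conditionally independent increments). Your argument is instead self-contained and built from the paper's own ingredients: the conditional Gaussian/Poisson structure \ref{list:A1}, \ref{list:A3}, \ref{list:A5}, conditional Laplace functionals of exactly the type displayed in \eqref{eq:characteristic_B}--\eqref{eq:characteristic_H}, and Lemma \ref{lemma:f_alpha_and integration} to pull the $\Ff^\Lambda$-measurable normalisers $f_j=(1+\Lambda(\Delta_j))^{-1/2}$ outside the stochastic integral. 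Your treatment of the two membership statements is complete and correct: the exponent of $\E[e^{cI(\phi)}\mid\Ff^\Lambda]$ is nonnegative (so the conditioning denominator is bounded below by $1$) and bounded above by a constant multiple of $\int_0^T\int_\RR\phi_s(z)^2\,\Lambda(ds,dz)$, which is a bounded random variable precisely because $\phi\in\mathcal K$; this makes transparent why $\mathcal K$ is defined as it is. What the citation buys the paper is brevity; what your route buys is a proof that visibly lives in the filtration $\GG$ and uses only facts established in the paper.

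On totality, your diagnosis of the main obstacle --- the spurious additive constant, equivalently a Dirac mass at the origin in the $\zeta$-weighted conditional law --- is exactly right, but of your two proposed fixes only the second closes the gap. Keeping strictly positive normalisers does not by itself remove the constant: inserting bounded $\Ff^\Lambda$-measurable weights inside the exponent and comparing two different weights shows that $\theta\mapsto\E[\zeta e^{i\theta f\mu(\Delta)}\mid\Ff^\Lambda]$ is constant in $\theta$, i.e. the weighted conditional law equals $\E[\zeta\mid\Ff^\Lambda]\,\delta_0$, so one is in every case driven back to proving $\E[\zeta\mid\Ff^\Lambda]=0$. Your atom-removal argument does prove this on $\{\Lambda(X)>0\}$, provided you add one inequality: by Cauchy--Schwarz, $|\E[\zeta\ind_{\{\mu(\Delta')=0\}}\mid\Ff^\Lambda]|\le\E[\zeta^2\mid\Ff^\Lambda]^{1/2}\,\Prob(\mu(\Delta')=0\mid\Ff^\Lambda)^{1/2}$, so an unweighted no-atom statement kills the weighted mass; and a coordinate with no conditional atom at $0$ exists a.s.\ on $\{\Lambda(X)>0\}$ (immediate on $\{\Lambda^B\not\equiv0\}$; on the jump side choose a rational cut $r$ with $\Lambda^H(\Delta\cap([0,r]\times\Rr))\notin\mathbb{N}$, possible a.s.\ by continuity of $t\mapsto\Lambda^H(\Delta\cap([0,t]\times\Rr))$). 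The one case refinement cannot reach is the degenerate event $\{\Lambda(X)=0\}\in\Ff^\Lambda$, on which every $\mu(\Delta)$ vanishes a.s.\ and the atom is genuine; there $\zeta$ is a.s.\ equal to a constant (it is a measurable function of the $\mu(\Delta)$'s, all of which vanish), so $\E[\zeta]=0$ together with $\zeta=0$ a.s.\ off this event forces that constant to be zero. With these two repairs --- the Cauchy--Schwarz step and the degenerate event --- your plan is a complete proof.
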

\begin{proof}
The first claim is proved in \cite[Lemma 6]{Yablonski2007}, the second can be shown using arguments as in the proofs of \cite[Lemmas 4 and 6]{Yablonski2007}. The last claim is proved in \cite[Lemma 9]{Yablonski2007}.
\end{proof}
\begin{lemma}
Assume $\phi \in \mathcal{K}$. Define, for $t\in[0,T]$, 
\begin{equation*}
\zeta_t =  \exp\Big\{ \int\limits_0^t \phi_s(0) \ins dB_s +  \int\limits_0^t \int\limits_\Rr \phi_s(z) \ins\tilde{H}(ds,dz) \Big\}.
\end{equation*}
Then the following representation holds:
\begin{align}
\zeta_T =&\; \E\big[ \zeta_T \ins\big| \Ff^\Lambda\big] + \int\limits_0^T \Big[\E\big[ \frac{\zeta_T}{\zeta_s} \ins\big| \Ff^\Lambda\big]    \zeta_{s\minus}\phi_s(0) \Big] \ins dB_s         \nonumber \\
&+ \int\limits_0^T \int\limits_\Rr \Big[\E\big[ \frac{\zeta_T}{\zeta_s} \ins\big| \Ff^\Lambda\big] \zeta_{s\minus} \big(e^{\phi_s(z)} - 1 \big) \Big] \ins \tilde{H}(ds,dz).
\label{eq:zeta_representation}
\end{align}
\label{lemma:zeta_representation}
\end{lemma}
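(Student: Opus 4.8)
The plan is to recognise $\zeta$ as a stochastic (Dol\'eans--Dade) exponential, to write down the linear SDE it satisfies via It\^o's formula, and then to divide out its conditional-mean process so as to isolate a driftless equation whose right-hand side is exactly \eqref{eq:zeta_representation}. Concretely, set $Y_t := \int_0^t \phi_s(0)\ins dB_s + \int_0^t\int_{\Rr}\phi_s(z)\ins\Ht(ds,dz)$, so that $\zeta_t = \exp\{Y_t\}$. The continuous martingale part of $Y$ is the $B$-integral, with conditional quadratic variation $\phi_s(0)^2\lambda_s^B\,ds$, while the jumps of $Y$ equal $\phi_s(z)$ and are carried by $H$. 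Applying It\^o's formula to the exponential then yields
\begin{equation*}
\zeta_t = 1 + \int_0^t \zeta_{s\minus}\phi_s(0)\ins dB_s + \int_0^t\int_{\Rr}\zeta_{s\minus}\big(e^{\phi_s(z)}-1\big)\ins\Ht(ds,dz) + \int_0^t \zeta_{s\minus}U_s\ins ds,
\end{equation*}
with drift coefficient $U_s := \tfrac12\phi_s(0)^2\lambda_s^B + \int_{\Rr}\big(e^{\phi_s(z)}-1-\phi_s(z)\big)\ins\nu(dz)\lambda_s^H$. Since the integration is in the sense of \cite{DiNunno2010}, I would justify this by conditioning on $\Ff^\Lambda$: given $\Ff^\Lambda$ the measure $B$ is Gaussian and $H$ Poisson with deterministic intensities (\ref{list:A1}, \ref{list:A3}), so $Y$ is a classical additive semimartingale and the ordinary It\^o formula applies in the conditional model.

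Next I would compute the conditional mean $\Psi_t := \E[\zeta_t\mid\Ff^\Lambda]$. Using the conditional independence of increments (\ref{list:A2}, \ref{list:A4}, \ref{list:A5}) together with the conditional characteristic functions \eqref{eq:characteristic_B}--\eqref{eq:characteristic_H} at imaginary argument (the conditional L\'evy--Khintchine formula), this gives the explicit form $\Psi_t = \exp\{\int_0^t U_s\ins ds\}$ with the \emph{same} $U_s$; in particular $\Psi$ is $\Ff^\Lambda$-measurable, continuous, of finite variation, and solves $d\Psi_t = \Psi_t U_t\ins dt$. The crux of the argument is precisely this coincidence, that the It\^o drift of $\zeta$ is the logarithmic derivative of its conditional mean. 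Applying the quotient rule to $\mathcal{E}_t := \zeta_t/\Psi_t$ therefore cancels the $ds$-term and leaves the driftless equation
\begin{equation*}
\mathcal{E}_t = 1 + \int_0^t \mathcal{E}_{s\minus}\phi_s(0)\ins dB_s + \int_0^t\int_{\Rr}\mathcal{E}_{s\minus}\big(e^{\phi_s(z)}-1\big)\ins\Ht(ds,dz).
\end{equation*}

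Finally, I would set $t=T$, use $\mathcal{E}_{s\minus}=\zeta_{s\minus}/\Psi_s$ (by continuity of $\Psi$), and multiply both sides by the $\Ff^\Lambda$-measurable variable $\Psi_T$, invoking Lemma \ref{lemma:f_alpha_and integration} to pull $\Psi_T$ inside the two stochastic integrals. Since $\Psi_T = \E[\zeta_T\mid\Ff^\Lambda]$ and $\Psi_T/\Psi_s = \E[\zeta_T/\zeta_s\mid\Ff^\Lambda]$ (again by conditional independence of increments), the identity obtained is exactly \eqref{eq:zeta_representation}. The remaining point is integrability: one must check that the integrands lie in $\II$ so that $I$ and Lemma \ref{lemma:f_alpha_and integration} apply, which follows from the boundedness built into the definition of $\mathcal{K}$ and the $L^2$-bounds of Lemma \ref{lemma:exponentials_dense}. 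I expect this integrability bookkeeping, and the rigorous justification of It\^o's formula through conditioning, to be the main obstacles; once $\Psi$ is computed the algebraic cancellation is immediate.
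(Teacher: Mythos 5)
Your proposal is correct and follows essentially the same route as the paper: the paper's proof also hinges on showing that $\zeta_t/\E[\zeta_t\mid\Ff^\Lambda]$ (your $\mathcal{E}_t$, the paper's $Y_t$) is a driftless Dol\'eans--Dade exponential via It\^o's formula, then multiplying by $\E[\zeta_T\mid\Ff^\Lambda]$ at $t=T$, pulling it inside the integrals with Lemma \ref{lemma:f_alpha_and integration}, and using the conditional-independence factorization $\E[\zeta_T\mid\Ff^\Lambda]=\E[\zeta_s\mid\Ff^\Lambda]\,\E[\zeta_T/\zeta_s\mid\Ff^\Lambda]$. The only difference is bookkeeping order (you apply It\^o to $\zeta$ and then divide out the conditional mean, while the paper writes the ratio explicitly and applies It\^o to it directly), which does not change the argument.
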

%
%\begin{remark}
\noindent
Note that the integrands in \eqref{eq:zeta_representation} are $\mathbb{G}$-predictable.
%\end{remark}
%
\begin{proof}
Let 
\begin{align}
Y_t =&\;  \frac{\zeta_t}{\E\big[ \zeta_t \ins\big| \Ff^\Lambda\big]}  \label{eq:Y_def_represention}  \\
=&\; \exp\bigg\{ \int\limits_0^t \phi_s(0) \ins dB_s + \int\limits_0^t \int\limits_\Rr \phi_s(z) \ins \tilde{H}(ds,dz) - \int\limits_0^t \frac{1}{2}\phi_s(0)^2 \ins \lambda_s^B ds\nonumber \\
&-\int\limits_0^t\int\limits_\Rr \big[ e^{\phi_s(z)}-1-\phi_s(z) \big] \ins \nu(dz) \lambda^H_s ds \bigg\}. \nonumber
\end{align}
Note that both $Y_t$ and $\zeta_t$ are elements of $L^2(\Omega, \Ff, \Prob)$ by Lemma \ref{lemma:exponentials_dense}. By It{\^o}'s formula
\begin{align}
dY_t &=Y_{t\minus} \Big( \phi_t(0) \ins dB_t  +  \int\limits_\Rr  \big( e^{\phi_t(z)}-1 \big)   \ins \tilde{H}(dt,dz)\Big), \label{eq:Y_t-ito} \\
Y_0 &= 1. \nonumber
\end{align}
Combining \eqref{eq:Y_def_represention} and \eqref{eq:Y_t-ito} the above equalities yields
\begin{align*}
\zeta_T &= \E\big[ \zeta_T \ins \big| \Ff^\Lambda\big] Y_T   \nonumber \displaybreak[0] \\ 
&= \E\big[ \zeta_T \ins\big| \Ff^\Lambda\big] \Big( 1 + \int\limits_0^T Y_{s\minus}\phi_s(0) \ins dB_s +   \int\limits_0^T\int\limits_\Rr Y_{s\minus} \big( e^{\phi_s(z)}-1 \big)   \ins \tilde{H}(ds,dz)\Big) \nonumber \displaybreak[0]  \\
&=
\begin{aligned}[t] 
&\E\big[ \zeta_T \,\big|\, \Ff^\Lambda\big] + \int\limits_0^T  \E\big[ \zeta_T \,\big|\, \Ff^\Lambda\big] Y_{s\minus} \phi_s(0) \ins dB_s \\
&+ \int\limits_0^T \int\limits_\Rr \Big[\E\big[ \zeta_T \,\big|\, \Ff^\Lambda\big] Y_{s\minus} \big(e^{\phi_s(z)} - 1 \big) \Big] \ins \tilde{H}(ds,dz)  %\label{eq:careful} 
\end{aligned}\displaybreak[0] \nonumber \\
&= 
\begin{aligned}[t]
& \E\big[ \zeta_T \,\big|\, \Ff^\Lambda\big]  + \int\limits_0^T  \E\big[ \frac{\zeta_T}{\zeta_s}  \,\big|\, \Ff^\Lambda\big] \zeta_{s\minus} \phi_s(0) \ins dB_s \\
&+ \int\limits_0^T \int\limits_\Rr \Big[\E\big[ \frac{\zeta_T}{\zeta_s} \,\big|\, \Ff^\Lambda\big] \zeta_{s\minus} \big(e^{\phi_s(z)} - 1 \big) \Big] \ins \tilde{H}(ds,dz) 
\end{aligned} \nonumber
\end{align*}
where we used Lemma \ref{lemma:f_alpha_and integration} and the equations 
\begin{equation*}
Y_s \E\big[\zeta_T \ins \big|\Ff^\Lambda \big] = Y_s\E\big[\zeta_s \,\big|\, \Ff^\Lambda \big] \E\big[\frac{\zeta_T}{\zeta_s} \ins\big| \Ff^\Lambda \big] = \zeta_s \E\big[\frac{\zeta_T}{\zeta_s} \ins\big| \Ff^\Lambda \big].  
\end{equation*}
% at \eqref{eq:careful}.
\end{proof}
% Theorem \ref{teorem:ito_representation} is classical decomposition theorem with an integral and a orthogonal component. The novelty of Theorem \ref{teorem:ito_representation} is identifying the $\Ff_T^\Lambda$-measurability of the orthogonal componenent.
\begin{teorem}
Assume $\xi \in L^2\big(\Omega,\Ff,\mathbb{P}\big)$. Then there exists a unique $\phi \in \II$ such that 
\begin{equation}
\xi = \E\big[ \xi \ins \big| \Ff^{\Lambda} \big] + \int\limits_0^T\int\limits_\RR \phi_s(z) \ins \mu(ds,dz).
\label{eq:ito_representation}
\end{equation}
\label{teorem:ito_representation}
\end{teorem}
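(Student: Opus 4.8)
The plan is to follow the classical completeness strategy. Define
\[
\mathcal{R} := \Big\{ \xi \in L^2(\Omega,\Ff,\Prob) \;:\; \exists\, \phi \in \II \text{ with } \xi = \E\big[ \xi \,\big|\, \Ff^\Lambda\big] + I(\phi) \Big\},
\]
and show that $\mathcal{R}$ is a closed linear subspace of $L^2(\Omega,\Ff,\Prob)$ containing a total set; then $\mathcal{R} = L^2(\Omega,\Ff,\Prob)$ and existence follows. The starting observation is that the representation is an \emph{orthogonal} splitting: since $I(\phi)$ is the terminal value of the $\GG$-martingale $t\mapsto \int_0^t\int_\RR \phi \ins \mu(ds,dz)$ issued from $0$, and $\Gg_0 = \Ff^\Lambda$, we have $\E\big[ I(\phi) \,\big|\, \Ff^\Lambda\big] = 0$. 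Consequently the $\Ff^\Lambda$-measurable summand is forced to equal $\E\big[\xi \,\big|\, \Ff^\Lambda\big]$, and the two summands are orthogonal in $L^2$.

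Uniqueness is then immediate: if $I(\phi) = I(\phi')$ then $I(\phi-\phi')=0$, and the isometry \eqref{eq:isometry_general} gives $\|\phi-\phi'\|_\II = 0$, i.e. $\phi=\phi'$ as elements of $\II$; conditioning on $\Ff^\Lambda$ pins the remaining part down as $\E\big[\xi \,\big|\, \Ff^\Lambda\big]$. Linearity of $\mathcal{R}$ is clear from linearity of $I$ and of conditional expectation.

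For closedness I would use the Pythagorean identity
\[
\|\xi\|_{L^2}^2 = \big\| \E\big[\xi \,\big|\, \Ff^\Lambda\big] \big\|_{L^2}^2 + \|\phi\|_\II^2, \qquad \xi \in \mathcal{R},
\]
which follows from the orthogonality above together with \eqref{eq:isometry_general}. If $\xi_n \to \xi$ in $L^2$ with $\xi_n = \E\big[\xi_n \,\big|\, \Ff^\Lambda\big] + I(\phi_n)$, applying the identity to $\xi_n-\xi_m$ shows that $(\phi_n)$ is Cauchy in $\II$; since $\II$ is a closed subspace of $L^2(\Lambda\times\Prob)$ and hence complete, it has a limit $\phi\in\II$, and by \eqref{eq:isometry_general} $I(\phi_n)\to I(\phi)$, while $\E\big[\xi_n \,\big|\, \Ff^\Lambda\big] \to \E\big[\xi \,\big|\, \Ff^\Lambda\big]$ by continuity of conditional expectation. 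Passing to the limit gives $\xi = \E\big[\xi \,\big|\, \Ff^\Lambda\big] + I(\phi)$, so $\xi\in\mathcal{R}$.

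Finally, to exhibit a total subset of $\mathcal{R}$ I would invoke Lemma \ref{lemma:zeta_representation}: for each $\phi\in\mathcal{K}$ it writes $\zeta_T = e^{I(\phi)}$ in exactly the form \eqref{eq:ito_representation} with explicit $\GG$-predictable integrands, and these integrands lie in $\II$ because $e^{I(\phi)}\in L^2$ by Lemma \ref{lemma:exponentials_dense}, so the stochastic integral is an $L^2$ element whose integrand is forced into $\II$ by \eqref{eq:isometry_general}. Hence $\{e^{I(\phi)} : \phi\in\mathcal{K}\}\subset\mathcal{R}$, and this family is total in $L^2(\Omega,\Ff,\Prob)$, again by Lemma \ref{lemma:exponentials_dense}. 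A closed subspace containing a total set is the whole space, so $\mathcal{R}=L^2(\Omega,\Ff,\Prob)$. The main obstacle is the closedness step: everything rests on the orthogonal decomposition $\E\big[I(\phi) \,\big|\, \Ff^\Lambda\big]=0$ and on the isometry delivering the Pythagorean identity, so that $L^2$-convergence of $\xi_n$ transfers to $\II$-convergence of the integrands. The genuinely hard work, namely the It\^o expansion behind Lemma \ref{lemma:zeta_representation} and the totality in Lemma \ref{lemma:exponentials_dense}, is already in place.
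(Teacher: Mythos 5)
Your proposal is correct and takes essentially the same route as the paper: both arguments rest on Lemma \ref{lemma:zeta_representation} to represent the exponentials, Lemma \ref{lemma:exponentials_dense} for their totality, and the isometry \eqref{eq:isometry_general} together with the orthogonality $\E\big[I(\phi)\,\big|\,\Ff^\Lambda\big]=0$ to transfer $L^2$-convergence of the approximating sequence into $\II$-convergence of the integrands. Your closed-subspace formulation is merely a cleaner packaging of the paper's approximation-and-limit argument, being somewhat more explicit about the closedness and limit-passing step that the paper states tersely.
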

\noindent 
Note that the two summands in \eqref{eq:ito_representation} are orthogonal. Here $\E[ \xi \ins| \Ff^{\Lambda}]$ represents the stochastic component of $\xi$ which cannot be recovered by integration on $\mu$.
\begin{proof}
At first let $\xi = \zeta(T)$, where
\begin{equation*}
\zeta(T) = \exp \Big\{\int\limits_0^T \int\limits_\RR \kappa_s(z) \ins \mu(ds,dz) \Big\}. 
\end{equation*}
From Lemma \ref{lemma:zeta_representation} the representation \eqref{eq:ito_representation} holds in this case. 

Consider a general $\xi \in L^2\big(\Omega,\mathcal{F},\mathbb{P}\big)$. Then $\xi$ can be approximated by a sequence of linear combinations of the form \eqref{eq:ito_representation} by Lemma \ref{lemma:exponentials_dense}. Let $\{\xi_n\}_{n\geq 1}$ be such a sequence. Then, by \eqref{eq:isometry_general}, we have
\begin{equation*}
\E \Big[ \big(\xi_n - \xi_m \big)^2 \Big] = \E \bigg[ \Big( \E\big[ \xi_n - \xi_m \big| \mathcal{F}^{\Lambda} \big] \Big)^2 + \int\limits_0^T\int\limits_\RR \big( \phi_s^{(n)} (z) - \phi_s^{(m)}(z) \big)^2 \ins \Lambda(ds,dz) \bigg].
\end{equation*}
Thus $\{\phi^{(n)}\}_{n\geq 1}$ is a Cauchy-sequence in $\II$, which proves existence. To prove uniqueness, suppose 
\begin{align*}
\xi &= \E\big[ \xi \ins \big| \mathcal{F}^{\Lambda} \big] + \int\limits_0^T\int\limits_\RR \phi_s(z) \ins \mu(ds,dz) \\
&= \E\big[ \xi \ins \big| \mathcal{F}^{\Lambda} \big] + \int\limits_0^T\int\limits_\RR \psi_s(z) \ins  \mu(ds,dz).
\end{align*}
Then, using \eqref{eq:isometry_general}, $\E \big[ \int_0^T\int_\RR \big( \phi_s(z) - \psi_s(z) \big)^2  \ins \Lambda(ds,dz) \big]=0$.

\end{proof}

\begin{remark}
We have here chosen to prove the above result using classical arguments well established for integrators as the Brownian motion, see e.g. \cite[Section 4]{Oksendal2005} and the Poisson random measure, see e.g. \cite{Lokka2005}. The existence of such a representation is a topic of \cite[Chapter 3]{Jacod2003}. There the result is obtained after a discussion on the solution of \emph{the martingale problem} (see \cite[Chapter 3]{Jacod2003}).

In \cite{Sjursen2011} we have instead proven this result for $\Ht$ using orthogonal polynomials and we have derived an \emph{explicit formula} for the integrand $\phi$ using the non-anticipating derivative, see \cite[Theorem 5.1]{Sjursen2011}. This result holds for more general choices of $\Lambda^H$, but with an assumption on the moments.

There are other related results in the literature. In \cite[Proposition 41]{Yablonski2007} the same representation is proved for a class of Malliavin differentiable random variables ({\`a} la Clark-Ocone type results).

% Theorem \ref{teorem:ito_representation} generalizes \cite[Proposition 41]{Yablonski2007} where the same representation was proved for a class of Malliavin differentiable random variables.
%Theorem \ref{teorem:ito_representation} was shown for $\Ht$ using orthogonal polynomials and non-anticipating derivatives in \cite{Sjursen2011}, including a explicit representation of the integrand $\phi$ \cite[Theorem 5.1]{Sjursen2011}. The result in \cite{Sjursen2011} holds for more general choices of $\Lambda^H$ but with an additional assumption on the moments. 

If $\Ff^H_T$-measurable $\xi$ are considered then representation is given
% A similar result to Theorem \ref{teorem:ito_representation} is also known for $\Ff_T^H$-measurable random variables 
in the general context of (marked) point processes, see for instance \cite[Theorem 4.12 and 8.8]{Bremaud1981} or \cite{Davis1976,Boel1975,Jacod1975}. Our result differs in the choice of filtration, which also leads to slightly different integrals. In \cite{Bremaud1981,Davis1976,Boel1975,Jacod1975} the integrator in the representation theorem are given by $H-\vartheta$ where $\vartheta$ is $\mathbb{F}^H$-predictable compensator of $H$. Our $\Lambda^H$ is not $\mathbb{F}^H$-predictable. 
\label{remark:integral_representations}
\end{remark}

\begin{teorem}
Assume $M_t$, $t\in [0,T]$, is a $\GG$-martingale. Then there exists a unique $\phi \in \II$ such that % $M$ has representation
\begin{equation*}
M_t = \E\big[ M_T \ins \big| \Ff^\Lambda \big] + \int\limits_0^t \int\limits_\RR \phi_s(z) \ins \mu(ds,dz), \quad t\in [0,T].
\end{equation*}
\label{Teorem:G_martingales}
\end{teorem}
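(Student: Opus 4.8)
The plan is to deduce the martingale representation (Theorem \ref{Teorem:G_martingales}) from the already-established integral representation for fixed random variables (Theorem \ref{teorem:ito_representation}). The natural strategy is to apply Theorem \ref{teorem:ito_representation} to the terminal value $M_T \in L^2(\Omega, \Ff, \Prob)$ and then show that conditioning the resulting integral representation at each time $t$ reproduces $M_t$. Concretely, since $M_T \in L^2(\Omega, \Ff, \Prob)$, Theorem \ref{teorem:ito_representation} gives a unique $\phi \in \II$ with
\begin{equation*}
M_T = \E\big[ M_T \ins \big| \Ff^\Lambda \big] + \int\limits_0^T \int\limits_\RR \phi_s(z) \ins \mu(ds,dz).
\end{equation*}
I would then define the candidate process $N_t := \E\big[ M_T \ins \big| \Ff^\Lambda \big] + \int_0^t \int_\RR \phi_s(z) \ins \mu(ds,dz)$ and aim to show $N_t = M_t$ for all $t\in[0,T]$.

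First I would verify that $N$ is a $\GG$-martingale. The term $\E[M_T \mid \Ff^\Lambda]$ is $\Gg_0 = \Ff^\Lambda$-measurable, hence constant along the filtration in the martingale sense, and the stochastic integral $\int_0^t\int_\RR \phi_s(z)\ins\mu(ds,dz)$ is a $\GG$-martingale because $\mu$ is a martingale random field with respect to $\GG$ with conditionally orthogonal values (as established around \eqref{eq:conditional_independence_explained} and \eqref{eq_mu:conditionally_orthogonal_values}) and $\phi$ is $\GG$-predictable with $\phi \in \II$. The key computation is that, for $s < t$, $\E\big[ \int_s^t\int_\RR \phi_r(z)\ins\mu(dr,dz) \ins\big|\ins \Gg_s\big] = 0$, which follows from the non-anticipating integration theory of \cite{DiNunno2010} and the martingale property \eqref{eq:conditional_independence_explained}. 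Thus $N_t = \E[N_T \mid \Gg_t] = \E[M_T \mid \Gg_t]$.

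Next, since $M$ is assumed to be a $\GG$-martingale, $M_t = \E[M_T \mid \Gg_t]$ as well, so $N_t = M_t$ for every $t\in[0,T]$ (up to the usual null sets), which establishes the representation with the same $\phi$ obtained from Theorem \ref{teorem:ito_representation}. Uniqueness of $\phi$ in $\II$ transfers directly from the uniqueness already proved in Theorem \ref{teorem:ito_representation}: if two integrands gave the same $M_t$ for all $t$, then in particular they coincide at $t = T$, forcing equality in $\II$ by the isometry \eqref{eq:isometry_general}. One should also note that $\E[M_T \mid \Ff^\Lambda]$ is the correct ``initial'' term because $\Gg_0 = \Ff^\Lambda$ and $\Ff_0^\mu$ is trivial, so $\E[M_T \mid \Gg_0] = \E[M_T \mid \Ff^\Lambda]$.

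The main obstacle, and the step deserving the most care, is justifying that the stochastic integral up to time $t$ equals the $\Gg_t$-conditional expectation of the full integral up to $T$, i.e. that $\E\big[\int_t^T\int_\RR \phi_s(z)\ins\mu(ds,dz)\ins\big|\ins\Gg_t\big]=0$ for $\phi\in\II$. For simple integrands this follows immediately from \eqref{eq:conditional_independence_explained}, and the general case is obtained by approximating $\phi$ in the $\II$-norm by simple $\GG$-predictable integrands and passing to the limit using the isometry \eqref{eq:isometry_general} together with the $L^2$-continuity of conditional expectation. This martingale property of the non-anticipating integral is presumably part of the framework of \cite{DiNunno2010}, so the proof can invoke it; the only genuine content beyond citing Theorem \ref{teorem:ito_representation} is assembling these pieces and confirming the initial term is $\Ff^\Lambda$-measurable rather than a mere constant, which is exactly the failure of the classical predictable representation property emphasized in the introduction.
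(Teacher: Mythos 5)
Your proposal is correct and takes essentially the same route as the paper: the paper's proof simply states that it ``follows classical arguments as in \cite[Theorem 4.3.4]{Oksendal2005} using Theorem \ref{teorem:ito_representation}'', and your argument---applying Theorem \ref{teorem:ito_representation} to $M_T$, using the martingale property of the non-anticipating integral to show $\E\big[\int_t^T\int_\RR \phi_s(z)\,\mu(ds,dz)\,\big|\,\Gg_t\big]=0$, and transferring uniqueness via the isometry \eqref{eq:isometry_general}---is exactly that classical argument, with the initial term correctly identified as the $\Ff^\Lambda$-measurable random variable $\E[M_T\,|\,\Gg_0]$. The only cosmetic difference is that the textbook version patches together integrands obtained by representing $M_t$ for each $t$, whereas you condition the terminal representation once; the ingredients are the same.
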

\begin{proof}
The proof follows classical arguments as in \cite[Theorem 4.3.4]{Oksendal2005} using Theorem \ref{teorem:ito_representation}.
% \begin{align*}
% M_t &= \E\Big[ M_T \Big| \Gg_t \Big] \\
% &= \E \Big[ \E\big[ M_T \big| \Ff^\Lambda \big] + \int\limits_0^T \int\limits_\RR \phi(s,z) \ins \mu(ds,dz) \Big| \Gg_t \Big] \\
% &= \E\big[ M_T \big| \Ff^\Lambda \big] + \int\limits_0^t \int\limits_\RR \phi(s,z) \ins \mu(ds,dz).
% \end{align*}
\end{proof}

\section{BSDE: Existence and uniqueness of the solution}
\label{section:BSDEs}

Hereafter we tackle directly the question of existence and uniqueness of the solution of \eqref{eq:BSDE_intro}:
\begin{equation*}
Y_t = \xi + \int\limits_t^T g_s\big(\lambda_s,Y_s,\phi_s \big) \ins ds - \int\limits_t^T\int\limits_{\mathbb{R}} \phi_s(z)  \ins \mu(ds,dz), \quad t\in[0,T].
\end{equation*}
Indeed for the given terminal condition $\xi$ and driver (or generator) $g$, a solution is given by the couple of $\GG$-adapted processes $(Y,\phi)$ on $(\Omega, \Ff, \Prob)$ satisfying the equation above. In the sequel we characterize explicitly the functional spaces in use and the elements of the BSDE to obtain a solution. In the following section we study explicitly the case when the driver $g$ is linear.

% We want to show that given $\xi$ and $g$, there exists a process $Y$ such that $Y$ is given by a backwards stochastic differential equation and $Y(T) = \xi$. To be specific, the goal is the forthcoming equation \eqref{eq:bsde_theorem}.

Let $S$ be the space of $\GG$-adapted stochastic processes $Y(t,\omega)$, $t\in [0,T]$, $\omega \in \Omega$, such that 
\begin{equation*}
\| Y \|_S := \sqrt{ \E \big[ \sup_{0\leq t \leq T} |Y_t|^2 \big] } < \infty,
\end{equation*}
and $\Hg$ be the space of $\GG$-predictable stochastic processes $f (t,\omega)$, $t\in [0,T]$, $\omega \in \Omega$, such that
\begin{equation*}
\E \Big[ \int\limits_0^T f_s ^2 \ins ds \Big] < \infty.
\end{equation*}
Recall the definition of $\II$ in \eqref{eq:II_defined} and % and that $\lambda = (\lambda^B,\lambda^H)$.
denote $\Phi$ the space of functions $\phi :\RR \to \RR$ such that 
\begin{equation}
| \phi(0)|^2+ \int\limits_\Rr \phi(z)^2 \ins \nu(dz) < \infty.
\label{eq:Phi}
\end{equation}
\begin{definisjon}
We say that $(\xi,g)$ are \emph{standard parameters} when $\xi\in L^2\big(\Omega,\Ff,\mathbb{P}\big)$ and $g: [0,T]\times [0,\infty)^2 \times \mathbb{R}\times \Phi  \times \Omega \to \mathbb{R}$ such that $g$ satisfies (for some $K_g>0$)
\begin{align}
& g_\cdot(\lambda, Y,\phi,\cdot) \text{is $\GG$-adapted for all $\lambda\in\LL$, $Y\in S$, $\phi\in\II$,} \label{eq:f_cond0} \\
& g_\cdot(\lambda_{\cdot} ,0,0, \cdot )  \in \Hg, \text{ for all }\lambda\in\LL \label{eq:f_cond1} \displaybreak[0] \\
& \begin{aligned}[t]
\big|  &g_t \big( (\lambda^B, \lambda^H), y_1, \phi^{(1)} \big) - g_t \big((\lambda^B,\lambda^H), y_2, \phi^{(2)} \big) \big|  \leq K_g \Big( \big| y_1-y_2 \big|   \\
&+\big| \phi^{(1)}(0) - \phi^{(2)}(0) \big|\sqrt{\lambda^B}   + \sqrt{ \int\limits_\Rr  | \phi^{(1)}(z) - \phi^{(2)}(z) |  ^2 \ins  \nu(dz) } \sqrt{\lambda^H} \Big),
%& + \Big[ \int\limits_\Rr \big(\phi_t^{(1)}(z)-\phi_t^{(2)} \big)^2 \nu(dz) \lambda_t \Big]^{\frac{1}{2}}  \Big)
\end{aligned} 
\label{eq:f_cond2} \displaybreak[0] \\ 
& \text{for all }(\lambda^B,\lambda^H) \in [0,\infty)^2, y_1,y_2 \in \RR, \text{ and } \phi^{(1)},\phi^{(2)} \in \Phi \; dt\times d\Prob \text { a.e.}  \nonumber 
\end{align}
\label{definisjon:standard_parameters}
\end{definisjon}

We recall the fundamental inequality $(a_1 +a_2+\dots + a_n)^2 \leq n(a_1^2 +a_2^2 +\dots a_n^2)$, for any $n\in\mathbb{N}$ and $a_1,a_2, \dots ,a_n \in \RR$, playing an important role in the technical lemmas below.

\begin{lemma}
% Consider $\lambda=(\lambda^H,\lambda^B) \in \LL$, 
Consider $(Y,\phi), (U,\psi) \in S\times \II$.
Let $g:[0,T]\times[0,\infty)^2\times\RR\times \Phi\times\Omega \to \RR$ satisfy \eqref{eq:f_cond1} and \eqref{eq:f_cond2}. Then, for any $t\in[0,T]$, we have
\begin{align}
\E \Big[ \Big( \int\limits_t^T &  g_s(\lambda_s,Y_s,\phi_s) -  g_s(\lambda_s,U_s,\psi_s) \ins ds \Big)^2 \Big]  \leq 3 K_g^2 (T-t) \nonumber \\
&   \E \Big[ (T-t) \sup_{t\leq r \leq T} | Y_r - U_r |^2 + \int\limits_t^T\int\limits_\RR | \phi_s(z)-\psi_s(z) |^2 \Lambda(ds,dz) \Big] 
\label{eq:g_from_lips}  
\end{align}
and
\begin{align}
\E \Big[ \Big( \int\limits_t^T & \Big| g_s(\lambda_s,U_s,\psi_s) \Big| \ins ds \Big)^2 \Big] \leq  (T-t) \E \Big[ 2 \int\limits_t^T | g_s(\lambda_s,0,0) |^2 \ins ds  \nonumber \\
&  + 6 K_g^2\Big( (T-t) \sup_{t\leq r \leq T} |U_r|^2   + \int\limits_t^T\int\limits_\RR | \psi_s(z) |^2 \ins \Lambda(ds,dz)\Big) \Big].
\label{eq:g_expected_inequality}  
\end{align}
\end{lemma}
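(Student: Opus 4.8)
The plan is to derive both estimates directly from the Lipschitz hypothesis \eqref{eq:f_cond2}, the Cauchy--Schwarz inequality in the time variable, and the elementary inequality $(a_1+\dots+a_n)^2 \leq n(a_1^2+\dots+a_n^2)$ recalled above. The only structural observation needed is that, by the definition \eqref{eq:measure_Lambda} of $\Lambda$, for any integrand $\rho$ one has
\[
\int_t^T |\rho_s(0)|^2 \ins \lambda_s^B \ins ds + \int_t^T\int_\Rr |\rho_s(z)|^2 \ins \nu(dz)\lambda_s^H \ins ds = \int_t^T\int_\RR |\rho_s(z)|^2 \ins \Lambda(ds,dz),
\]
so that the two noise contributions coming out of \eqref{eq:f_cond2} recombine into a single integral against $\Lambda$. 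Finiteness of all the right-hand sides is guaranteed by $(Y,\phi),(U,\psi)\in S\times\II$ together with $g_\cdot(\lambda_\cdot,0,0,\cdot)\in\Hg$ from \eqref{eq:f_cond1}.

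For \eqref{eq:g_from_lips} I would first apply Cauchy--Schwarz (equivalently Jensen) to the $ds$-integral to extract the factor $(T-t)$:
\[
\Big(\int_t^T \big[g_s(\lambda_s,Y_s,\phi_s)-g_s(\lambda_s,U_s,\psi_s)\big]\ins ds\Big)^2 \leq (T-t)\int_t^T \big|g_s(\lambda_s,Y_s,\phi_s)-g_s(\lambda_s,U_s,\psi_s)\big|^2 \ins ds.
\]
Then I bound the integrand pointwise by \eqref{eq:f_cond2} and square the resulting three-term sum with $n=3$, producing $3K_g^2$ times $|Y_s-U_s|^2 + |\phi_s(0)-\psi_s(0)|^2\lambda_s^B + \int_\Rr|\phi_s(z)-\psi_s(z)|^2\nu(dz)\lambda_s^H$. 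Taking expectations, recombining the last two terms into $\Lambda$ as above, and estimating $\int_t^T|Y_s-U_s|^2 \ins ds \leq (T-t)\sup_{t\leq r\leq T}|Y_r-U_r|^2$ yields exactly \eqref{eq:g_from_lips}.

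For \eqref{eq:g_expected_inequality} the same Cauchy--Schwarz step reduces matters to bounding $\E[\int_t^T |g_s(\lambda_s,U_s,\psi_s)|^2 \ins ds]$. Here the asymmetric constants $2$ and $6$ come from a two-stage splitting: first I would write $g_s(\lambda_s,U_s,\psi_s) = g_s(\lambda_s,0,0) + \big(g_s(\lambda_s,U_s,\psi_s)-g_s(\lambda_s,0,0)\big)$ and use $(a+b)^2\leq 2a^2+2b^2$; then I would apply \eqref{eq:f_cond2} to the second summand and square its three-term bound with $n=3$. This gives the factor $2\cdot 3 K_g^2 = 6K_g^2$ in front of the $U$- and $\psi$-terms while leaving $2$ in front of $|g_s(\lambda_s,0,0)|^2$. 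Recombining the noise terms into $\Lambda$ and bounding $\int_t^T |U_s|^2 \ins ds$ by $(T-t)\sup_{t\leq r\leq T}|U_r|^2$ completes the estimate.

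There is no serious obstacle here: the argument is entirely elementary once \eqref{eq:f_cond2} is in place. The only point requiring care is the bookkeeping of the multiplicative constants --- in particular choosing $n=3$ (not $n=4$) for the Lipschitz term and handling the $g_s(\lambda_s,0,0)$ contribution separately via $(a+b)^2\leq 2a^2+2b^2$, which is precisely what produces the factors $2$ and $6$ in \eqref{eq:g_expected_inequality} rather than a single symmetric constant.
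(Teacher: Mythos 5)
Your proposal is correct and follows essentially the same route as the paper's proof: Cauchy--Schwarz in time to extract $(T-t)$, the Lipschitz bound \eqref{eq:f_cond2} with the $n=3$ square inequality giving $3K_g^2$, recombination of the two noise terms into a single $\Lambda$-integral, and for \eqref{eq:g_expected_inequality} the same splitting $g = g(\lambda,0,0) + (g - g(\lambda,0,0))$ with $(a+b)^2 \leq 2a^2+2b^2$ producing the constants $2$ and $6K_g^2$. The only difference is the (immaterial) order in which Cauchy--Schwarz and the pointwise Lipschitz estimate are applied in the first inequality.
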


\begin{proof}
Let $t\in [0,T]$. Inequality \eqref{eq:g_from_lips} follows from the Lipschitz conditions \eqref{eq:f_cond2}:
\begin{align*}
\E \Big[ & \Big( \int\limits_t^T  g_s(\lambda_s,Y_s,\phi_s) - g_s(\lambda_s,U_s,\psi_s) \ins ds \Big)^2 \Big]  \nonumber \\ 
&\leq \begin{aligned}[t]
K_g^2 \; \E \Big[& \Big( \int\limits_t^T | Y_s-U_s |  + | \phi_s(0) - \psi_s(0) | \sqrt{\lambda_s^B} \\
&+ \sqrt{ \int\limits_\Rr \big| \phi_s(z)-\psi_s(z) \big|^2 \ins \nu(dz)} \sqrt{\lambda_s^H} \ins ds \Big)^2 \Big]
\end{aligned} \displaybreak[0]  \\
&\leq
\begin{aligned}[t]
 3 K_g^2 (T-t) \E \Big[& \int\limits_t^T \big| Y_s-U_s \big|^2  + \big| \phi_s(0) - \psi_s(0) \big|^2 \lambda_s^B \\
& + \int\limits_\Rr \big| \phi_s(z)-\psi_s(z) \big|^2 \ins \nu(dz) \lambda_s^H \ins ds  \Big] 
\end{aligned} \displaybreak[0] \\
&\leq 
\begin{aligned}[t]
3 K_g^2 (T-t) \E \Big[& (T-t) \sup_{t\leq r \leq T}  \big| Y_r-U_r \big|^2  \\
&+ \int\limits_t^T\int\limits_\RR \big| \phi_s(z)-\psi_s(z)\big|^2 \ins \Lambda(ds,dz)  \Big]. 
\end{aligned}
\end{align*}
For inequality \eqref{eq:g_expected_inequality} we have
\begin{align*}
\E \Big[ \Big( \int\limits_t^T & \Big| g_s(\lambda_s,U_s,\psi_s) \Big| \ins ds \Big)^2 \Big]  \nonumber \\
&\leq (T-t) \E \Big[  \int\limits_t^T  \big| g_s(\lambda_s,U_s,\psi_s) \big|^2  \ins ds  \Big] \nonumber \displaybreak[0] \\
& \leq (T-t) \E\Big[  \int\limits_t^T \Big( \Big| g_s(\lambda_s,0,0) \Big| + \Big| g_s(\lambda_s,U_s,\psi_s) - g_s(\lambda_s,0,0) \Big| \Big)^2 \ins ds \Big] \nonumber \displaybreak[0] \\
& \leq 2 (T-t) \E\Big[  \int\limits_t^T  \Big| g_s(\lambda_s,0,0) \Big|^2 + \Big| g_s(\lambda_s,U_s,\psi_s) - g_s(\lambda_s,0,0) \Big|^2 \ins ds \Big] \nonumber \displaybreak[0]
%& \begin{aligned}[t]
% \leq  (T-t) \E \Big[  \int\limits_t^T 2 \big|g_s(\lambda_s,0,0) \big|^2 + 6 K_g^2 \Big(  |U_s|^2 & + | \psi_s(0) |^2 \lambda^B_s \\ 
% &+ \int\limits_\Rr |\psi_s(z)|^2 \ins \nu(dz) \lambda^H_s \Big) \ins ds \Big]
% \end{aligned}  \nonumber  \\ 
% &\leq (T-t) \E \Big[ \int\limits_t^T 2 \big|g_s(\lambda_s,0,0) \big|^2 +6 K_g^2 \Big( | \psi_s(0) |^2 \lambda^B_s+  \int\limits_\Rr |\psi_s(z)|^2 \ins \nu(dz) \lambda_s^H \Big) \ins ds \Big] \nonumber \\
% &+  6 K_g^2 (T-t)^2 \E \Big[ \sup_{t\leq r\leq T} |U_s|^2 \Big]  \nonumber
\end{align*}
The result now follows from \eqref{eq:f_cond2} by proceeding as in the proof of \eqref{eq:g_from_lips} above.
%Where we used H{\"o}lder's inequality at \eqref{eq:used_holder}. %
\end{proof}

\begin{lemma}
Consider $U \in S$, $\psi,\phi \in \II$ and let $(\xi,g)$ be standard parameters. Define a stochastic process $Y_t$, $t\in [0,T]$, by
\begin{equation}
Y_t = \xi + \int\limits_t^T g_s\big(\lambda_s, U_s , \psi_s  \big) \ins ds - \int\limits_t^T\int\limits_\RR \phi_s(z) \ins \mu(ds,dz).%, \quad t\in [0,T].
\label{eq:Y_with_U}
\end{equation}
Then $Y \in S$. In particular we have
\begin{align}
\E\Big[ \sup_{t\leq r \leq T} |Y_r|^2 \Big] \leq &\; \E\Big[ 3 \xi^2 + 3\Big(\int\limits_t^T  \Big| g_s\big(\lambda_s, U_s , \psi_s \big) \Big| \ins ds\Big)^2 \nonumber \\
&+ 30  \int\limits_t^T \int\limits_\RR |\phi_s(z)|^2  \ins \Lambda(ds,dz) \big].
\label{eq:sup_Y_inequality}
\end{align}
\label{lemma:inequality_from_doob}
\end{lemma}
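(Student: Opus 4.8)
The plan is to estimate $|Y_r|^2$ pathwise for each $r\in[t,T]$, then pass to the supremum and the expectation. Introducing the running integral $N_r:=\int_0^r\int_\RR \phi_s(z)\,\mu(ds,dz)$, I would rewrite $Y_r=\xi+\int_r^T g_s(\lambda_s,U_s,\psi_s)\,ds-(N_T-N_r)$ and apply the elementary bound $(a_1+a_2+a_3)^2\le 3(a_1^2+a_2^2+a_3^2)$ recalled before the lemma to obtain
\[
|Y_r|^2\le 3\xi^2+3\Big(\int_r^T |g_s(\lambda_s,U_s,\psi_s)|\,ds\Big)^2+3\,(N_T-N_r)^2 .
\]
Since the supremum of a sum is dominated by the sum of the suprema, the first term contributes $3\xi^2$; for the second, the nonnegativity of $|g|$ together with $r\ge t$ gives $\int_r^T|g_s|\,ds\le\int_t^T|g_s|\,ds$ pathwise, so its supremum over $r\in[t,T]$ is at most $\big(\int_t^T|g_s|\,ds\big)^2$. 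These two pieces already match the first two summands on the right-hand side of \eqref{eq:sup_Y_inequality}.

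The hard part is the stochastic-integral term $\sup_{t\le r\le T}(N_T-N_r)^2$, and this is precisely where Doob's inequality enters (hence the name of the lemma). By the martingale-random-field structure set up in Section \ref{section:integration} and the isometry \eqref{eq:isometry_general}, the process $N$ is a square-integrable $\GG$-martingale (with a c\`adl\`ag modification), and hence so is the increment $\tilde M_r:=N_r-N_t$, $r\in[t,T]$. Writing $N_T-N_r=\tilde M_T-\tilde M_r$ and using $(a-b)^2\le 2a^2+2b^2$ yields $\sup_{t\le r\le T}(N_T-N_r)^2\le 2\tilde M_T^2+2\sup_{t\le r\le T}\tilde M_r^2$; taking expectations and applying Doob's $L^2$-maximal inequality $\E[\sup_{t\le r\le T}\tilde M_r^2]\le 4\,\E[\tilde M_T^2]$ then gives $\E[\sup_{t\le r\le T}(N_T-N_r)^2]\le 10\,\E[\tilde M_T^2]$. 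The isometry \eqref{eq:isometry_general}, applied to $\phi\,\ind_{(t,T]}\in\II$, identifies $\E[\tilde M_T^2]=\E\big[\int_t^T\int_\RR |\phi_s(z)|^2\,\Lambda(ds,dz)\big]$, and the prefactor $3$ turns the constant $10$ into the desired $30$, establishing \eqref{eq:sup_Y_inequality}. Using $\tilde M_r=N_r-N_t$ rather than $N_r$ itself is what produces the integral over $[t,T]$ (not $[0,T]$) in the final bound.

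Finally, I would read off $Y\in S$ from \eqref{eq:sup_Y_inequality} at $t=0$: the right-hand side is finite because $\E[\xi^2]<\infty$ (as $\xi\in L^2(\Omega,\Ff,\Prob)$); because the $g$-term is controlled by the previous lemma, namely estimate \eqref{eq:g_expected_inequality} with $(U,\psi)$, which is finite since $(\xi,g)$ are standard parameters (so $g_\cdot(\lambda_\cdot,0,0)\in\Hg$), $U\in S$ and $\psi\in\II$; and because $\E\big[\int_0^T\int_\RR|\phi_s(z)|^2\,\Lambda(ds,dz)\big]=\|\phi\|_\II^2<\infty$ as $\phi\in\II$. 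The only delicate point is ensuring that $N$ is a bona fide c\`adl\`ag $\GG$-martingale so that Doob's inequality is applicable; this is supplied by the integration theory for martingale random fields of \cite{DiNunno2010}.
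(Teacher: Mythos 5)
Your proposal is correct and follows essentially the same route as the paper's proof: the pointwise bound via $(a_1+a_2+a_3)^2\le 3(a_1^2+a_2^2+a_3^2)$, the splitting $\int_r^T = \int_t^T - \int_t^r$ of the stochastic integral (your $\tilde M_r = N_r - N_t$ is exactly this), Doob's $L^2$-inequality combined with the isometry \eqref{eq:isometry_general} to produce the constant $10$, and the conclusion $Y\in S$ via \eqref{eq:g_expected_inequality}. No gaps; the argument matches the paper's in both structure and constants.
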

\begin{proof}
Directly from \eqref{eq:Y_with_U}, taking the square, we have
% From the definition of $Y$, \eqref{eq:Y_with_U}, and taking the square
\begin{equation*}
|Y_t|^2 \leq 3 \xi^2 + 3\Big(\int\limits_t^T  \Big| g_s\big(\lambda_s, U_s , \psi_s \big) \Big| ds\Big)^2 + 3\Big(\int\limits_t^T\int\limits_\RR \phi_s(z)  \ins \mu(ds,dz)\Big)^2.
\end{equation*}
In the next step we take the supremum and obtain
\begin{align*}
\E \Big[ \sup_{t\leq r \leq T} |Y_r|^2 \Big]  \leq&\; \E \Big[ 3 \xi^2 + 3 \Big( \int\limits_t^T \big|  g_s\big(\lambda_s,U_s , \psi_s \big) \big| \ins ds\Big)^2 \Big]\\
& + \E \Big[\sup_{t \leq r \leq T} 3\Big(\int\limits_r^T\int\limits_\RR \phi_s(z) \ins \mu(ds,dz)\Big)^2\Big].
\end{align*}
%
% Note that from Doob's martingale inequality, %see for instance \cite[Theorem 2.1.5]{Applebaum2009} 
We have 
\begin{align*}
\E &\Big[ \sup_{t\leq r \leq T} \Big( \int\limits_r^T\int\limits_\RR \phi_s(z) \ins \mu(ds,dz) \Big)^2 \Big] \\ 
&= \E \Big[ \sup_{t\leq r \leq T} \Big( \int\limits_t^T\int\limits_\RR \phi_s(z) \ins \mu(ds,dz)- \int\limits_t^r\int\limits_\RR \phi_s(z) \ins \mu(ds,dz) \Big)^2 \Big] \displaybreak[0] \\
&\leq \E \Big[ 2 \Big(\int\limits_t^T \int\limits_\RR \phi_s(z) \ins \mu(ds,dz) \Big)^2 + 2 \sup_{t\leq r \leq T} \Big( \int\limits_t^r \int\limits_\RR \phi_s(z) \ins \mu(ds,dz) \Big)^2 \Big] \\
&\leq 10 \E \Big[ \int\limits_t^T \int\limits_\RR \phi_s(z)^2 \ins \Lambda(ds,dz) \Big]
\end{align*}
by application of Doob's martingale inequality, see e.g. \cite[Theorem 2.1.5]{Applebaum2004}. 
Equation \eqref{eq:sup_Y_inequality} follows, and we conclude that $Y\in S$ by \eqref{eq:g_expected_inequality}.
\end{proof}
\medskip \noindent
Now let $(g,\xi)$ be standard parameters. Define the mapping% $\Theta$,
\begin{equation}
\Theta: S\times \II  \to S\times \II , \quad  \Theta(U,\psi) := (Y, \phi)
\label{eq:Theta_definition}
\end{equation}
as follows. The component $\phi$ is given by Theorem \ref{Teorem:G_martingales} as the unique element in $\II$ that provides the stochastic integral representation 
\begin{equation*}
M_t = M_0 + \int\limits_0^t\int\limits_\RR \phi_s(z) \ins \mu(ds,dz), \quad t\in [0,T], \\
\label{eq:kappa_definition}
\end{equation*}
of the martingale 
\begin{equation*}
M_t = \E\big[ \xi + \int\limits_0^T g_s\big(\lambda_s,U_s ,\psi_s \big) \ins ds \ins \big| \mathcal{G}_t \big], \quad t\in [0,T]. 
\end{equation*}
Note that $M_0 = \E\big[ \xi + \int_0^T g_s\big(\lambda_s,U_s ,\psi_s \big) \ins ds \ins \big| \Ff^\Lambda \big]$. The component $Y$ in \eqref{eq:Theta_definition} is defined by 
\begin{equation}
Y_t = \E \Big[ \xi + \int\limits_t^T g_s(\lambda_s,U_s,\psi_s)\ins  ds \ins \Big| \Gg_t \Big], \quad t\in [0,T].  \label{eq:Y_as_conditional} 
\end{equation}
Note that
\begin{align}
Y_t &= M_t - \int\limits_0^t g_s(\lambda_s, U_s,\psi_s) \ins ds \nonumber \\
&= M_0 + \int\limits_0^t\int\limits_\RR \phi_s(z) \ins \mu(ds,dz)  - \int\limits_0^t g_s(\lambda_s,U_s,\psi_s) \ins ds. \nonumber 
\end{align}
Since $Y_T=\xi$, we also have $Y_t = \xi-Y_T+Y_t$ so that
\begin{equation}
Y_t = \xi + \int\limits_t^T g_s\big(\lambda_s,U_s , \psi_s \big) \ins ds - \int\limits_t^T\int\limits_\RR \phi_s(z) \ins \mu(ds,dz).
\label{eq:Y_with_U2}
\end{equation}
Hence $Y\in S$ by Lemma \ref{lemma:inequality_from_doob} and the mapping \eqref{eq:Theta_definition} is well-defined.

We use the mapping $\Theta$ to prove that the BSDE of type \eqref{eq:BSDE_intro} admits a unique solution for the given standard parameters $(\xi,g)$.

\begin{lemma}
% Define a sequence in $S\times \II $ in a recursive manner by $Y^0 = \phi^0 = 0$ and $(Y^{n+1},\phi^{n+1}) = \Theta(Y^{n},\phi^{n},)$, for $n=0,1,\dots,$. Then
% Let $(U,\psi)\in (\SS\times \II)$ and $(Y,\phi) = \Theta(U,\psi)$. Then
%Combining \eqref{Y_recursion_bounded} and \eqref{eq:inequality_from_orthogonality} we can see that there exists a constant $K_1>0$ such that
Consider $(U^{(1)},\psi^{(1)}), (U^{(2)},\psi^{(2)}) \in S\times \II$ and define $(Y^{(1)}, \phi^{(1)}) = \Theta(U^{(1)},\psi^{(1)})$ and $(Y^{(2)}, \phi^{(2)}) = \Theta(U^{(2)},\psi^{(2)})$. Set $\bar{U}=U^{(1)}-U^{(2)}$, $\bar{\psi}=\psi^{(1)}-\psi^{(2)}$, $\bar{Y} = Y^{(1)}-Y^{(2)}$ and $\bar{\phi} = \phi^{(1)}-\phi^{(2)}$. There exists a $K>0$ such that
\begin{align}
\E &\, \bigg[ \sup_{t\leq r \leq T} \big| \bar{Y}_r \big|^2 + \int\limits_t^T \int\limits_\RR \big| \bar{\phi}_s(z) \big|^2 \ins \Lambda(ds,dz) \bigg] \nonumber \\
&\leq K (T-t) \E \bigg[ (T-t) \sup_{t\leq r \leq T} \big| \bar{U}_r \big|^2 +  \int\limits_t^T  \int\limits_\RR \big| \bar{\psi}_s(z) \big|^2 \ins \Lambda(ds,dz) \bigg], \quad t\in[0,T]. \label{eq:contraction_inequality} 
\end{align}
%
% \label{lemma:inequality_from_orthogonality}
\label{lemma:contraction_inequality} 
\end{lemma}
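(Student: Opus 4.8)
The plan is to work directly from the explicit form \eqref{eq:Y_with_U2} of the two images of $\Theta$ and to estimate the two pieces of the left-hand side of \eqref{eq:contraction_inequality} separately. Writing $\bar g_s := g_s(\lambda_s,U^{(1)}_s,\psi^{(1)}_s)-g_s(\lambda_s,U^{(2)}_s,\psi^{(2)}_s)$ and subtracting the two instances of \eqref{eq:Y_with_U2}, the terminal conditions $\xi$ cancel and we obtain, for every $t\in[0,T]$,
\begin{equation*}
\bar Y_t = \int\limits_t^T \bar g_s \ins ds - \int\limits_t^T\int\limits_\RR \bar\phi_s(z) \ins \mu(ds,dz), \qquad \bar Y_T = 0.
\end{equation*}
Throughout, the Lipschitz estimate \eqref{eq:g_from_lips} reduces $\bar g$ to $\bar U$ and $\bar\psi$, so it suffices to bound everything by $\E[(\int_t^T\bar g_s\,ds)^2]$-type quantities.

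First I would control the integral term $\E[\int_t^T\int_\RR|\bar\phi_s(z)|^2\,\Lambda(ds,dz)]$. Setting $J_t := \int_t^T\int_\RR\bar\phi_s(z)\,\mu(ds,dz)$, we have $\int_t^T\bar g_s\,ds = \bar Y_t + J_t$. Since $\bar Y_t$ is $\Gg_t$-measurable while the non-anticipating integral is a $\GG$-martingale, so that $\E[J_t\mid\Gg_t]=0$ by \eqref{eq:conditional_independence_explained}, the cross term $\E[\bar Y_t J_t]=\E[\bar Y_t\,\E[J_t\mid\Gg_t]]$ vanishes and a Pythagoras identity gives
\begin{equation*}
\E\Big[\Big(\int\limits_t^T \bar g_s \ins ds\Big)^2\Big] = \E\big[\bar Y_t^2\big] + \E\big[J_t^2\big].
\end{equation*}
In particular $\E[J_t^2]\le \E[(\int_t^T\bar g_s\,ds)^2]$, and the isometry \eqref{eq:isometry_general} applied to $\bar\phi\ind_{(t,T]}$ identifies $\E[J_t^2]$ with $\E[\int_t^T\int_\RR|\bar\phi_s(z)|^2\,\Lambda(ds,dz)]$. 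Feeding \eqref{eq:g_from_lips} into the right-hand side then bounds the $\bar\phi$-term by $3K_g^2(T-t)$ times the bracket on the right of \eqref{eq:contraction_inequality}.

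Next I would bound $\E[\sup_{t\le r\le T}|\bar Y_r|^2]$, mirroring the argument of Lemma \ref{lemma:inequality_from_doob}. Writing $\int_r^T\int_\RR\bar\phi\,d\mu = J_t - \int_t^r\int_\RR\bar\phi\,d\mu$ and applying the fundamental inequality, $\sup_{t\le r\le T}|\bar Y_r|^2$ is dominated by a constant multiple of $(\int_t^T|\bar g_s|\,ds)^2$ plus $\sup_{t\le r\le T}(\int_r^T\int_\RR\bar\phi\,d\mu)^2$. The first summand is handled once more by \eqref{eq:g_from_lips}; for the second I split off the endpoint and apply Doob's martingale inequality to $r\mapsto\int_t^r\int_\RR\bar\phi\,d\mu$ exactly as in Lemma \ref{lemma:inequality_from_doob}, yielding a bound of the form $10\,\E[\int_t^T\int_\RR|\bar\phi_s(z)|^2\,\Lambda(ds,dz)]$, which the previous step already controls in terms of $\bar U$ and $\bar\psi$.

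Adding the two estimates and absorbing the accumulated constants into a single $K>0$ depending only on $K_g$ produces \eqref{eq:contraction_inequality}. The one genuinely structural step—as opposed to routine Lipschitz and Doob bookkeeping—is the orthogonality identity used to estimate $\E[J_t^2]$: it rests on $\bar Y_t$ being $\Gg_t$-measurable together with the $\GG$-martingale property of the non-anticipating integral, which forces the cross term to vanish. This is precisely where the filtration $\GG$ and the isometry for $\mu$ are indispensable; the remainder is the standard a priori estimate machinery for BSDEs.
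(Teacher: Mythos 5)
Your proposal is correct and follows essentially the same route as the paper's own proof: the orthogonality of $\bar Y_t$ and the stochastic integral (via the $\GG$-martingale property), giving the Pythagoras identity and, through the isometry and \eqref{eq:g_from_lips}, the bound on $\E[\int_t^T\int_\RR|\bar\phi_s(z)|^2\,\Lambda(ds,dz)]$, followed by the Doob-type supremum estimate. The only cosmetic difference is that the paper invokes \eqref{eq:sup_Y_inequality} from Lemma \ref{lemma:inequality_from_doob} directly (with terminal value $0$ and driver equal to the difference of the two drivers), whereas you re-derive that same estimate inline.
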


\begin{proof}
From \eqref{eq:Y_with_U2}, for any $t\in[0,T]$, we have
\begin{align*}
% Y^{(1)}_t - Y^{(2)}_t 
\bar{Y}_t
&= \int\limits_t^T g_s(\lambda_s,U^{(1)}_s,\psi^{(1)}_s)\ins ds- \int\limits_t^T g_s(\lambda_s,U^{(2)}_s,\psi^{(2)}_s)\ins ds \\
&-\int\limits_t^T \int\limits_{\RR} 
% \phi^{(1)}_s(z)-\phi^{(2)}_s(z)
\bar{\phi}_s(z)
\ins \mu(ds,dz).
\end{align*}
% Writing $Y_t^{n+1}$ and $Y_t^n$ as in \eqref{eq:Y_with_U2} we obtain
% \begin{align*}
% Y^{n+1}_t &- Y^{n}_t + \int\limits_t^T \int\limits_\RR \phi^{n+1}_s(z) -  \phi^{n}_s(z) \ins \mu(ds,dz) \nonumber \\
% &= \int\limits_t^T g_s(\lambda_s,Y_s^{n},\phi^n_s) - g_s(\lambda_s,Y^{n-1}_s,\phi^{n-1}_s) \ins ds.
% \label{eq:Y_n_nminusone}
% \end{align*}
%
Since 
\begin{align*}
E\Big[ & \bar{Y}_t \, \int\limits_t^T\int\limits_\RR \bar{\phi}_s(z) \ins \mu(ds,dz) \Big]  = E\Big[  \bar{Y}_t \, \E \big[ \int\limits_t^T\int\limits_\RR \bar{\phi}_s(z) \ins \mu(ds,dz) \big| \Gg_t \big] \Big] = 0,
\end{align*}
we have
\begin{align}
\E &\, \Big[ \Big( \bar{Y}_t + \int\limits_t^T\int\limits_\RR \bar{\phi}_s(z) \ins \mu(ds,dz)  \Big)^2 \Big] = \E \Big[ \big| \bar{Y}_t \big|^2 +  \int\limits_t^T\int\limits_\RR \big| \bar{\phi}_s(z) \big|^2 \ins \Lambda(ds,dz) \Big] \nonumber\displaybreak[0] \\ 
&= \E \Big[ \Big( \int\limits_t^T g_s(\lambda_s,U^{(1)}_s,\psi^{(1)}_s)\ins ds- \int\limits_t^T g_s(\lambda_s,U^{(2)}_s,\psi^{(2)}_s) \ins ds \Big)^2 \Big].
\label{eq:Y_n_nminusone}
\end{align}
%
% We apply \eqref{eq:g_from_lips} and remove $\big| Y_t^{n+1} - Y_t^{n} \big|^2$ on the left side of \eqref{eq:Y_n_nminusone} to find
% The inequality \eqref{eq:inequality_from_orthogonality} is achieved by removing $\big| Y_t^{n+1} - Y_t^{n} \big|^2$ on the left side of \eqref{eq:Y_n_nminusone} and applying \eqref{eq:g_from_lips} on the right hand side of equality \eqref{eq:Y_n_nminusone}.
We apply \eqref{eq:g_from_lips} and obtain
\begin{align}
\E &\, \Big[ \int\limits_t^T \int\limits_\RR \big| \bar{\phi}_s(z) \big|^2 \ins \Lambda(ds,dz)\Big] \leq \E \Big[ \big| \bar{Y}_t \big|^2 +  \int\limits_t^T\int\limits_\RR \big| \bar{\phi}_s(z) \big|^2 \ins \Lambda(ds,dz) \Big] \nonumber \\
&\leq 3 K_g^2 (T-t) \E \Big[ (T-t) \sup_{t\leq r\leq T} \big| \bar{U}_r \big|^2  + \int\limits_t^T\int\limits_\RR \big| \bar{\psi}_s(z) \big|^2 \ins \Lambda(ds,dz) \Big] .
\label{eq:inequality_from_orthogonality}
\end{align}
By \eqref{eq:g_from_lips}, \eqref{eq:sup_Y_inequality} and \eqref{eq:inequality_from_orthogonality} we have
\begin{align}
\E \Big[ & \sup_{t\leq r \leq T}  | \bar{Y}_r  |^2 \Big] \leq \E\Big[ 0 + 3\Big(\int\limits_t^T  \Big| g_s\big(\lambda_s, U^{(1)}_s , \psi^{(1)}_s \big)-g_s(\lambda_s,U^{(2)}_s,\psi^{(2)}_s)  \Big| ds\Big)^2 \nonumber \\
&+ 30 \Big( \int\limits_t^T \int\limits_\RR |\bar{\phi}_s(z) |^2  \ins \Lambda(ds,dz) \Big) \Big] \nonumber \displaybreak[0] \\
\leq &\; \Big( 9+90 \Big) K_g^2 (T-t)^2  \E \Big[ \sup_{t\leq r\leq T} |\bar{U}_r(z) |^2 \Big] \nonumber \\
&+ (9+90) K_g^2 (T-t) \E \Big[ \int\limits_t^T \int\limits_\RR | \bar{\psi}_s(z) |^2 \ins \Lambda(ds,dz) \Big].
\label{Y_recursion_bounded}
\end{align}

Combining \eqref{Y_recursion_bounded} and \eqref{eq:inequality_from_orthogonality} gives \eqref{eq:contraction_inequality}.
\end{proof}
The existence and uniqueness for the BSDE now follow from the above estimates:
\begin{teorem}
Let $(g,\xi)$ be standard parameters. Then  
there exists a unique couple $(Y,\phi) \in S \times \II$ such that
\begin{align}
Y_t &= \xi + \int\limits_t^T g_s\big(\lambda_s,Y_s,\phi_s \big) \ins ds - \int\limits_t^T\int\limits_\RR \phi_s(z) \ins \mu(ds,dz) \nonumber \\
&= \xi + \int\limits_t^T g_s\big(\lambda_s,Y_s,\phi_s \big) \ins ds - \int\limits_t^T \phi_s(0)\ins dB_s - \int\limits_t^T  \int\limits_{\Rr} \phi_s(z) \ins \Ht(ds,dz).
\label{eq:bsde_theorem}
\end{align}
\label{teorem:BSDE_existence_uniqueness}
\end{teorem}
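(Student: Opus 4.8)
The plan is to realise the solution as the unique fixed point of the mapping $\Theta$ introduced in \eqref{eq:Theta_definition}. The crucial observation is that fixed points of $\Theta$ coincide with solutions of \eqref{eq:bsde_theorem}: by construction $\Theta(U,\psi)=(Y,\phi)$ satisfies the identity \eqref{eq:Y_with_U2}, so setting $(U,\psi)=(Y,\phi)$ there turns \eqref{eq:Y_with_U2} into exactly the BSDE \eqref{eq:bsde_theorem}, while conversely any solution $(Y,\phi)\in S\times\II$ is visibly a fixed point. Hence it suffices to show that $\Theta$ has a unique fixed point. I would work in $S\times\II$ equipped with the product norm $\|(U,\psi)\|^2 := \E\big[\sup_{0\leq t\leq T}|U_t|^2\big] + \E\big[\int_0^T\int_\RR |\psi_s(z)|^2 \ins \Lambda(ds,dz)\big]$. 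Since $S$ is complete and $\II$ is a closed subspace of $L^2(\Tt\times\RR\times\Omega,\Lambda\times\Prob)$, the space $S\times\II$ is Banach, and $\Theta$ maps it into itself by Lemma \ref{lemma:inequality_from_doob}.

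The contraction estimate \eqref{eq:contraction_inequality} of Lemma \ref{lemma:contraction_inequality}, evaluated at $t=0$, reads
\[
\big\|\Theta(U^{(1)},\psi^{(1)})-\Theta(U^{(2)},\psi^{(2)})\big\|^2 \leq K\,T\max(T,1)\,\big\|(U^{(1)},\psi^{(1)})-(U^{(2)},\psi^{(2)})\big\|^2 .
\]
When $KT\max(T,1)<1$ the Banach fixed point theorem applies directly and produces the unique fixed point, hence the unique solution on $\Tt$. The main obstacle is precisely that for a general horizon $T$ this constant need not be smaller than $1$, so $\Theta$ need not contract on the whole of $\Tt$.

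To remove this obstruction I would localise on short intervals where \eqref{eq:contraction_inequality} does contract. Fix $\delta>0$ with $K\delta\max(\delta,1)<1$; note that $\delta$ depends only on $K$ and not on the terminal datum. Reading \eqref{eq:contraction_inequality} at $t=T-\delta$ shows that the restriction of $\Theta$ to processes on $[T-\delta,T]$, with terminal value $\xi$, is a contraction in the corresponding norm on $[T-\delta,T]$, so Banach's theorem yields a unique solution $(Y,\phi)$ there. Since $Y\in S$, the value $Y_{T-\delta}\in L^2(\Omega,\Gg_{T-\delta},\Prob)$ is a legitimate terminal condition for the analogous problem on $[T-2\delta,T-\delta]$, for which Theorems \ref{teorem:ito_representation} and \ref{Teorem:G_martingales} apply with horizon $T-\delta$. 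Iterating backward, after $\lceil T/\delta\rceil$ steps one covers all of $\Tt$; concatenating the pieces gives a $\GG$-adapted couple $(Y,\phi)\in S\times\II$ solving \eqref{eq:bsde_theorem}, and uniqueness on each subinterval forces global uniqueness.

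The only points requiring care are the completeness of $S\times\II$ and the check that the glued process is $\GG$-adapted and has finite norm in $S\times\II$; both follow from Lemma \ref{lemma:inequality_from_doob} together with the finiteness of the number of patches. An equivalent route, avoiding the subdivision, would replace the norm by a $\beta$-weighted norm and tune $\beta$ large, but the sup-norm form of \eqref{eq:contraction_inequality} makes the interval argument the most transparent.
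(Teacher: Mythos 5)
Your proposal is correct and follows essentially the same route as the paper's own proof: both exploit the contraction estimate \eqref{eq:contraction_inequality} of Lemma \ref{lemma:contraction_inequality} to make $\Theta$ a contraction on subintervals of length bounded by a constant depending only on $K$, solve backward from $T$ using the value at the left endpoint (e.g.\ $Y_{T-\delta}$, resp.\ $Y^{(1)}_{t_1}$) as the terminal datum for the next patch, and concatenate the finitely many pieces into a global solution whose uniqueness follows from uniqueness on each patch. The paper merely carries out more explicitly the verification that the glued pair satisfies \eqref{eq:bsde_theorem} across the junction points, a step you correctly identify but leave brief.
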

\begin{proof}
Let $K$ be as in \eqref{eq:contraction_inequality}. Choose $t_1 \in [0,T)$ such that $\max\big\{ K (T-t_1)^2, K(T-t_1)\big\} < 1$. Denote $S(u,v)$ as the space consisting of the elements of $S$ equipped with the norm $\| Y \|_{S(u,v)}^2 = \E \big[ \sup_{u\leq r \leq v} | Y_r |^2 \big]$ and $\II(u,v)$ as the space of the elements of $\II$ equipped with the norm $\| \phi \|_{\II(u,v)}^2 = \E [ \int_u^v\int_\RR |\phi_s(z) |^2 \ins \Lambda(ds,dz)]$.
From \eqref{eq:contraction_inequality}, $\Theta$ is a contraction on $S(t_1,T)\times \II(t_1,T)$, and thus there exists a unique $(Y^{(1)},\phi^{(1)}) \in S(t_1,T)\times \II(t_1,T)$ such that $\Theta(Y^{(1)},\phi^{(1)}) = (Y^{(1)},\phi^{(1)})$ on $[t_1,T]$, \ie
\begin{equation*}
Y_t^{(1)} = \xi + \int\limits_t^T g_s\big(\lambda_s,Y_s^{(1)},\phi^{(1)}_s \big) \ins ds - \int\limits_t^T\int\limits_\RR \phi_s^{(1)}(z) \ins \mu(ds,dz), \quad  t\in [t_1,T] 
\end{equation*}

Take $t_2\in [0,t_1)$ so that $\max\big\{ K(t_1-t_2)^2, K(t_1-t_2)\big\} < 1$. Next, $\tilde{\phi} \in \II(t_2,t_1)$ is given by Theorem \ref{Teorem:G_martingales}, which is depending on $\tilde{U}$ and $\tilde{\psi}$, \ie
\begin{align*}
E \Big[ Y^{(1)}_{t_1} + \int\limits_0^{t_1} g_s(\lambda_s,\tilde{U}_s,\tilde{\psi}_s)\ins ds \ins \Big| \Gg_t \Big] =&\;  E \Big[ Y_t^{(1)} + \int\limits_0^{t_1} g_s(\lambda_s,\tilde{U}_s,\tilde{\psi}_s)\ins ds \ins \ins \Big| \Gg_{t_2} \Big] \\
& + \int\limits_{t_2}^t\int\limits_\RR \tilde{\phi}_s(z) \ins\mu(ds,dz) , \quad t\in[t_2,t_1],
\end{align*}
In addition, $\tilde{Y}_t$ is defined as 
\begin{align*}
\tilde{Y}_t = E \Big[ Y^{(1)}_{t_1} + \int\limits_t^{t_1} g_s(\lambda_s,\tilde{U}_s,\tilde{\psi}_s)\ins ds \ins \Big| \Gg_t \Big] , \quad t\in[t_2,t_1].
\end{align*}
Then, $\tilde{\Theta}$ can be defined by $\tilde{\Theta}(\tilde{U},\tilde{\psi})=(\tilde{Y},\tilde{\phi})$ for $(\tilde{U},\tilde{\psi})\in S(t_2,t_1)\times \II(t_2,t_1)$. 

Following the same arguments as above we conclude that $\tilde{\Theta}$ is a contraction on $S(t_2,t_1)\times \II(t_2,t_1)$ so that there exists a unique element $(Y^{(2)},\phi^{(2)})\in S(t_2,t_1)\times \II(t_2,t_1)$ such that $(Y^{(2)},\phi^{(2)})=\tilde{\Theta}(Y^{(2)},\phi^{(2)})$. Then we have
\begin{equation}
Y^{(2)}_t = Y^{(1)}_{t_1} + \int\limits_t^{t_1} g_s\big(\lambda_s,Y^{(2)}_s,\phi^{(2)}_s \big) \ins ds - \int\limits_t^{t_1}\int\limits_\RR \phi^{(2)}_s(z) \ins \mu(ds,dz), \quad t\in [t_2,t_1].
\end{equation}
Now consider 
\begin{align}
Y_t &= Y^{(1)}_t \ind_{t_1<t\leq T}(t) + Y^{(2)}_t \ind_{t_2<t\leq t_1}(t), \quad t\in [t_2,T], \nonumber \\
\phi_t &= \phi^{(1)}_t \ind_{t_1<t\leq T}(t) + \phi^{(2)}_t \ind_{t_2<t\leq t_1}(t),  \quad t\in [t_2,T]. \label{eq:iterative_scheme}
\end{align}
%
% We can now define a new BSDE on $[t_2,t_1]$, $0\leq t_2 < t_1$, with standard parameters $(Y_{t_1}^{(1)},g)$. Here $t_2$ is such that $\max\big\{ K_1 (t_1-t_2)^2, K_1(t_1-t_2)\big\} < 1$.          
% Using the same arguments as above, there exists a solution $(Y^{(2)},\phi^{(2)})$ in $S(t_2,t_1)\times \II(t_2,t_1)$ of this BSDE with parameters $(Y_{t_1}^{(1)},g)$. We combine $Y^{(1)},\phi^{(1)}$ and $Y^{(2)},\phi^{(2)}$ as the solution in $S(t_2,T)\times \II(t_2,T)$ as follows
% \begin{align}
% Y_t &= Y^{(1)}_t \ind_{t_1<t\leq T}(t) + Y^{(2)}_t \ind_{t_2<t\leq t_1}(t) \nonumber \\
% \phi_t &= \phi^{(1)}_t \ind_{t_1<t\leq T}(t) + \phi^{(2)}_t \ind_{t_2<t\leq t_1}(t). \label{eq:iterative_scheme}
% \end{align}
%
We can see that
\begin{equation}
Y_t = \xi + \int\limits_t^T g_s\big(\lambda_s,Y_s,\phi_s \big) \ins ds - \int\limits_t^T\int\limits_\RR \phi_s(z) \ins \mu(ds,dz), \quad \text{for } t\in [t_2,T].
\label{eq:XYZ}
\end{equation}
In fact, clearly \eqref{eq:XYZ} holds for $t\in [t_1,T]$. Assume $t\in(t_2,t_1]$, then 
\begin{align*}
Y_t =&\; Y_{t_1}^{(1)} +  \int\limits_t^{t_1} g_s\big(\lambda_s,Y_s^{(2)},\phi_s \big) \ins ds - \int\limits_t^{t_1}\int\limits_\RR \phi_s^{(2)}(z) \ins \mu(ds,dz) \displaybreak[0] \\
=&\; \xi + \int\limits_{t_1}^{T} g_s\big(\lambda_s,Y_s^{(1)},\phi_s \big) \ins ds - \int\limits_{t_1}^T\int\limits_\RR \phi_s^{(1)}(z) \ins \mu(ds,dz) \\
&+  \int\limits_t^{t_1} g_s\big(\lambda_s,Y^{(2)}_s,\phi_s \big) \ins ds - \int\limits_t^{t_1}\int\limits_\RR \phi_s^{(2)}(z) \ins \mu(ds,dz) \displaybreak[0] \\
=&\;  \xi + \int\limits_t^T g_s\big(\lambda_s,Y_s,\phi_s \big) \ins ds - \int\limits_t^T\int\limits_\RR \phi_s(z) \ins \mu(ds,dz).
\end{align*}
Proceed iteratively. Eventually, there is a step $n$ such that $\max\big\{ K(t_n-t_{n+1})^2, K(t_n-t_{n+1})\big\} < 1$ for $t_{n+1}=0$ (here $t_0=T)$. Then we conclude and have found a (unique) couple $(Y,\phi) \in S(0,T)\times \II(0,T)=S\times \II$ such that \eqref{eq:bsde_theorem} holds.
\end{proof}

\begin{remark}
The initial point $Y_0$ of the solution $Y$ is \emph{not} necessarily a (deterministic) constant. From the definition of $\GG$ and \eqref{eq:Y_as_conditional}, we see that $Y_0$ is a square integrable $\Ff^\Lambda$-measurable random variable. To be specific we have:
\begin{equation*}
Y_0 = \E \Big[ \xi + \int\limits_0^T g_s(\lambda_s,Y_s,\phi_s) \ins ds \ins \Big| \Ff^\Lambda \Big].
\end{equation*}
\end{remark}

\section{Linear BSDE's and a comparison theorem}

In the case of a linear driver the BSDE with Brownian motion or L{\'e}vy processes have an explicit representation. A similar represention holds in our case.  %The proof follows from the same ideas as \cite[Theorem 4.1]{Halle2010} and \cite[Theorem 6.2.2]{Pham2009}
\begin{teorem}
Assume we have the following BSDE:
\begin{align}
-dY_t =&\; \Big[ A_tY_t + C_t+ E_t(0) \phi_t(0) \sqrt{\lambda_t^B} + \int\limits_\Rr E_t(z) \phi_t (z) \ins \nu(dz) \sqrt{ \lambda_t^H} \Big] \ins dt  \nonumber \\
 &-\phi_t(0) \ins dB_t   -\int\limits_\Rr \phi_t(z) \ins \tilde{H}(dt,dz), \quad Y_T= \xi,
\label{eq:linearBSDE}
\end{align}
where the coefficients satisify
\begin{enumerate}
\renewcommand{\theenumi}{\roman{enumi})}
\renewcommand{\labelenumi}{\theenumi}
% \item $\phi\in \II$, 
\item $A$ is a bounded stochastic process, there exists $K_A > 0$ such that $|A_t | \leq K_A$ for all $t\in[0,T]$ $\Prob$-a.s.,
\label{list:boundedAB}
\item $C\in \Hg$, 
\label{list:CinH2}
\item $E \in \II$,
\item There exists $K_E > 0$ such that $ 0 \leq E_t(z) < K_E z $ for $z\in\Rr$, and $| E_t(0) | < K_E$ $\;dt\times d \Prob$-a.e.
\label{list:finite_E_integral1}
% \item There exists $\epsilon > 0$ such that $E_t(z) > -1 + \epsilon$ for all $t,z \neq 0$.
% \label{list:finite_E_integral2}
% \item "Suitable condition to ensure that the $\tilde{H}(ds,dz)$-integral in \eqref{eq:linear_ito} is a square integrable martingale"
\end{enumerate}
Then \eqref{eq:linearBSDE} has a unique solution $(Y,\phi)$ in $S\times \II$ and $Y$ has representation
\begin{equation*}
Y_t = \E \Big[ \xi \Gamma_T(t) + \int\limits_t^T \Gamma_s(t) C_s \ins ds \ins \Big| \Gg_t \Big], \quad t\in[0,T],
\end{equation*}
where
\begin{align*}
\Gamma_s(t) =&\; \exp\Big\{ \int\limits_t^s A_u -\frac{1}{2} E_u(0)^2 \ind_{\{\lambda_u^B\neq 0\}} \ins du + \int\limits_t^s E_u(0) \frac{\ind_{\{\lambda_u^B\neq 0\}} }{\sqrt{\lambda_u^B}} \ins dB_u \\
& + \int\limits_t^s \int\limits_\Rr \Big[ \ln\big(1+E_u(z)\frac{\ind_{\{\lambda_u^H\neq 0\}} }{\sqrt{\lambda_u^H}}  \big) - E_u(z)\frac{\ind_{\{\lambda_u^H \neq 0\}} }{\sqrt{\lambda_u^H}} \Big] \ins \nu(dz)\lambda^H_u \ins du \\
& + \int\limits_t^s \int\limits_\Rr \ln\big(1+E_u(z)\frac{\ind_{\{\lambda_u^H\neq 0\}} }{\sqrt{\lambda_u^H}} \big)  \ins \tilde{H}(du,dz) \Big\}.
\end{align*}
\label{teorem:theorem_linear_BSDEs}
\end{teorem}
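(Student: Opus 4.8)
The plan is to treat the two assertions separately: the existence and uniqueness statement reduces to the general theory already in place, while the explicit formula is obtained by an integration-by-parts computation of $\Gamma_s(t)Y_s$.

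For existence and uniqueness I would verify that the driver
$g_s(\lambda_s,Y_s,\phi_s) = A_sY_s + C_s + E_s(0)\phi_s(0)\sqrt{\lambda_s^B} + \int_{\Rr} E_s(z)\phi_s(z)\,\nu(dz)\sqrt{\lambda_s^H}$
together with $\xi$ forms a pair of standard parameters in the sense of Definition \ref{definisjon:standard_parameters}, so that Theorem \ref{teorem:BSDE_existence_uniqueness} applies verbatim. Conditions \eqref{eq:f_cond0} and \eqref{eq:f_cond1} are immediate from i)--iii), since $g$ is $\GG$-adapted and $g_s(\lambda_s,0,0)=C_s\in\Hg$. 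For the Lipschitz bound \eqref{eq:f_cond2} I would use $|A_s|\le K_A$ on the $Y$-increment, $|E_s(0)|<K_E$ on the Brownian increment, and Cauchy--Schwarz together with iv) on the jump increment, which gives $\int_{\Rr} E_s(z)^2\,\nu(dz)\le K_E^2\int_{\Rr} z^2\,\nu(dz)<\infty$; collecting these produces a single constant $K_g$, and hence a unique $(Y,\phi)\in S\times\II$.

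The heart of the argument is the representation. First I would recognize $\Gamma_s(t)$ as the Dol\'eans--Dade exponential solving the linear equation
$d\Gamma_s(t)=\Gamma_{s\minus}(t)\big[A_s\,ds+E_s(0)\tfrac{\ind_{\{\lambda_s^B\neq 0\}}}{\sqrt{\lambda_s^B}}\,dB_s+\int_{\Rr}E_s(z)\tfrac{\ind_{\{\lambda_s^H\neq 0\}}}{\sqrt{\lambda_s^H}}\,\Ht(ds,dz)\big]$
with $\Gamma_t(t)=1$; matching the compensator contributions $-\tfrac12 E_u(0)^2\ind$ and $[\ln(1+\cdot)-\cdot]$ confirms that the exponential in the statement is precisely this stochastic exponential, and the sign condition $E_s(z)\ge0$ in iv) guarantees $1+E_s(z)\ind/\sqrt{\lambda_s^H}\ge1$, so the logarithm is well defined and $\Gamma_s(t)>0$. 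I would then apply the product (integration-by-parts) rule for this martingale random field to $\Gamma_s(t)Y_s$. The key observation is a termwise cancellation of all finite-variation pieces except $-\Gamma_{s\minus}(t)C_s\,ds$: the $A_sY_s$ drift from $Y_{s\minus}\,d\Gamma_s$ cancels that from $\Gamma_{s\minus}\,dY_s$; the Brownian covariation produces $E_s(0)\phi_s(0)\sqrt{\lambda_s^B}$; and, after writing $H=\Ht+\Lambda^H$ in the jump covariation, the compensator yields $\int_{\Rr}E_s(z)\phi_s(z)\,\nu(dz)\sqrt{\lambda_s^H}$, both matching the corresponding driver terms. What remains is $d(\Gamma_s(t)Y_s)=-\Gamma_{s\minus}(t)C_s\,ds+dN_s$, with $N$ collecting the residual $dB_s$ and $\Ht(ds,dz)$ integrals. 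Integrating over $[t,T]$, using $\Gamma_t(t)=1$ and $Y_T=\xi$, and taking $\E[\,\cdot\,|\Gg_t]$ to annihilate $N$ yields the claimed identity.

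The main obstacle is integrability: I must ensure that $\Gamma_s(t)$ is square integrable, that $\Gamma Y$ becomes a \emph{true} (not merely local) martingale once the drift is removed, and that $\E[\int_t^T dN_s\mid\Gg_t]=0$. The bounds $|A|\le K_A$, $|E(0)|<K_E$ and $0\le E(z)<K_E z$ are exactly what control the exponent of $\Gamma$ despite the $1/\sqrt{\lambda}$ factors, because the $\lambda$ in the compensator cancels: the Brownian part of the quadratic variation is $\int E_s(0)^2\ind\,ds\le K_E^2 T$ and the jump part is $\int\!\int E_s(z)^2\ind\,\nu(dz)\,ds\le K_E^2 T\int_{\Rr}z^2\,\nu(dz)$, both finite. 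I expect to secure the martingale property by a localization argument, deriving the identity for $\Gamma$ and $Y$ stopped along a localizing sequence and then passing to the limit via these bounds together with $Y\in S$ and a Cauchy--Schwarz/Doob estimate, exploiting as in Lemmas \ref{lemma:exponentials_dense} and \ref{lemma:zeta_representation} that conditioning on $\Ff^\Lambda$ reduces $\Gamma$ to an ordinary Gaussian/Poisson exponential with controlled moments.
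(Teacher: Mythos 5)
Your proposal is correct and follows essentially the same route as the paper: verify the standard-parameter conditions (with the same Cauchy--Schwarz/H\"older bound on the jump term) to invoke Theorem \ref{teorem:BSDE_existence_uniqueness}, identify $\Gamma$ as the stochastic exponential solving the linear SDE, control its second moment using the cancellation of $\lambda$ against the $1/\sqrt{\lambda}$ factors (the paper does this via Gronwall's inequality), and apply It\^o's product rule so that all drift terms except $-\Gamma_{s\minus}C_s\,ds$ cancel, leaving a $\GG$-martingale whose conditional expectation gives the formula. Your extra care about the local-versus-true martingale issue via localization is a sensible way to make explicit a step the paper passes over quickly, but it is the same argument.
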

Note that $\Gamma_s(t) = \frac{\Gamma_s(0)}{\Gamma_t(0)}$.
\begin{proof}
The proof follows classical arguments, see e.g. \cite[Theorem 6.2.2]{Pham2009}. % \cite[Theorem 4.1]{Halle2010} and . 
Condition \eqref{eq:f_cond1} is guaranteed by \ref{list:CinH2}. From H\"older's inequality
\begin{align}
\int\limits_\Rr | E_t(z) \phi_t(z)| \ins\nu(dz) \sqrt{\lambda_t^H} &\leq \sqrt{\int\limits_\Rr E_t^2(z)  \ins\nu(dz)}  \sqrt{\int\limits_\Rr  \phi_t^2(z) \ins\nu(dz) } \sqrt{ \lambda_t^H} \nonumber \\ 
& \leq K_E \sqrt{ \int\limits_\Rr z^2 \ins\nu(dz)} \sqrt{\int\limits_\Rr  \phi_t^2(z) \ins\nu(dz) }\sqrt{\lambda_t^H},
\label{eq:holder_linear}
\end{align}
so from \ref{list:boundedAB} and \ref{list:finite_E_integral1} we obtain \eqref{eq:f_cond2}.
% so the conditions \eqref{eq:f_cond1} and \eqref{eq:f_cond2} are satisified with \ref{list:boundedAB}, \ref{list:CinH2} and \ref{list:finite_E_integral1}. 
It follows from Theorem \ref{teorem:BSDE_existence_uniqueness} that \eqref{eq:linearBSDE} has a unique solution $(Y,\phi) \in S\times \II$.

\smallskip \noindent
Denote $\Gamma_t = \Gamma_t(0)$. We have $\Gamma_0=1$ and It{\^o}'s formula gives us
\begin{equation}
d\Gamma_t = \Gamma_{t\minus} \Big( A_t \ins dt + E_t(0) \frac{\ind_{\{\lambda_t^B\neq 0\}} }{\sqrt{\lambda_t^B}} \ins dB_t +  \int\limits_\Rr E_t(z)\frac{\ind_{\{\lambda_t^H\neq 0\}} }{\sqrt{\lambda_t^H}} \ins \Ht(dt,dz) \Big).
\label{eq:Gamma_SDE} 
\end{equation}
Starting from \eqref{eq:Gamma_SDE}, 
\begin{align*}
\E \Big[ |\Gamma_t|^2 \Big] &\leq 
\begin{aligned}[t]
4 \E \Big[& 1 + \Big( \int\limits_0^t \Gamma_{s\minus} A_s \ins ds \Big)^2  + \Big(\int\limits_0^t \Gamma_{s\minus} E_s(0) \frac{\ind_{\{\lambda_s^B\neq 0\}} }{\sqrt{\lambda_s^B}} \ins dB_s \Big)^2  \\
& + \Big( \int\limits_0^t\int\limits_\Rr \Gamma_{s\minus}  E_s(z)\frac{\ind_{\{\lambda_s^H\neq 0\}} }{\sqrt{\lambda_s^H}} \ins \Ht(ds,dz)\Big)^2 \Big] 
\end{aligned} \displaybreak[0] \\
& \leq 
\begin{aligned}[t]
4 \E \Big[& 1 + T \int\limits_0^t  |\Gamma_{s\minus}|^2 A^2_s \ins ds + \int\limits_0^t  | \Gamma_{s\minus}  E_s(0) |^2 \ins ds \\
&+\int\limits_0^t \int\limits_{\Rr} |\Gamma_{s\minus}|^2 K_E^2 z^2 \ins \nu(dz) \ins ds \Big] 
\end{aligned} \\
& \leq K_\Gamma \E \Big[ 1 + \int\limits_0^t |\Gamma_{s\minus}|^2 \ins ds \Big]
\end{align*}
for some $K_\Gamma>0$, since $A$ and $E(0)$ are bounded and $z^2$ is integrable with respect to $\nu$. We conclude that $\Gamma_t \in L^2(\Omega, \Ff, \Prob)$ for all $t\in [0,T]$ by Gronwall's inequality.
\medskip \noindent
By It{\^o}'s formula we have % and denoting $\Gamma_t^0 = \Gamma_t$
\begin{align}
d( Y_t\Gamma_t) =&\; \Gamma_{t\minus} \Big(\big[  -A_t Y_t - C_t - E_t(0) \phi_t(0) \sqrt{\lambda^B_t} \big] \ins dt + \phi_t(0) \ins dB_t \nonumber \\
&- \int\limits_\Rr E_t(z) \phi_t(z) \ins \nu(dz) \sqrt{\lambda_t^H} \ins dt +\int\limits_\Rr  \phi_t(z) \ins \Ht(dt,dz) \Big) \nonumber \\
&+ Y_{t\minus} \Gamma_{t\minus} \Big(  A_t \ins dt + E_t(0) \frac{\ind_{\{\lambda_t^B\neq 0\}} }{\sqrt{\lambda_t^B}} \ins dB_t + \int\limits_\Rr E_t(z)\frac{\ind_{\{\lambda_t^H\neq 0\}} }{\sqrt{\lambda_t^H}} \ins \Ht(dt,dz) \nonumber \Big) \\
&+ \Gamma_{t\minus} \Big(  E_t(0) \phi_t(0)  \frac{\ind_{\{\lambda_t^B\neq 0\}} }{\sqrt{\lambda_t^B}} \lambda_t^B \ins dt + \int\limits_\Rr E_t(z) \phi_t(z) \frac{\ind_{\{\lambda_t^H\neq 0\}} }{\sqrt{\lambda_t^H}} \ins H(dt,dz) \Big)  \nonumber \displaybreak[0] \\
=&\; 
\begin{aligned}[t]
&- \Gamma_{t\minus} C_t \ins dt + \Big[ \Gamma_{t\minus} \phi_t(0) + Y_{t\minus} E_t(0) \frac{\ind_{\{\lambda_t^B\neq 0\}} }{\sqrt{\lambda_t^B}} \Big] \ins dB_t  
+ \int\limits_\Rr \Big[ \phi_t(z) \Gamma_{t\minus} \\
&+ Y_{t\minus} \Gamma_{t\minus} E_t(z)\frac{\ind_{\{\lambda_t^H\neq 0\}} }{\sqrt{\lambda_t^H}} + \Gamma_{t\minus} \phi_t(z) E_t(z)\frac{\ind_{\{\lambda_t^H\neq 0\}} }{\sqrt{\lambda_t^H}}  \Big] \ins\tilde{H}(dt,dz). 
\end{aligned}
\label{eq:linear_ito}
\end{align}
Hence $Y_t\Gamma_t + \int_0^t \Gamma_s C_s \ins ds$, $t\in [0,T]$, is a $\GG$-martingale so that 
\begin{align*}
Y_t \Gamma_t + \int\limits_0^t \Gamma_s C_s \ins ds &= \E \Big[ Y_T \Gamma_T + \int\limits_0^T \Gamma_s C_s \ins ds \ins \Big| \Gg_t\Big] \\
Y_t \Gamma_t &= \E \Big[ \xi \Gamma_T + \int\limits_t^T \Gamma_s C_s \ins ds \ins \Big| \Gg_t\Big] \\
Y_t &= \E \Big[ \xi \Gamma^t_T + \int\limits_t^T \Gamma_s(t) C_s \ins ds \ins \Big| \Gg_t \Big].
\end{align*}
(Recall that $\Gamma_s(t)=\frac{\Gamma_s}{\Gamma_t}$).
\end{proof}

\begin{teorem} % [Comparison Theorem]
Let $(g^{(1)},\xi^{(1)})$ and $(g^{(2)},\xi^{(2)})$ be two sets of standard parameters for the BSDE's with solutions $(Y^{(1)},\phi^{(1)})$, $(Y^{(2)},\phi^{(2)})\in S\times \II$. Assume that
\begin{equation*}
g^{(2)}_t(\lambda,y,\phi,\omega) = f_t\Big(y, \phi(0) \kappa_t(0) \sqrt{\lambda^B}  ,\int\limits_\Rr \phi(z) \kappa_t(z) \ins \nu(dz) \sqrt{\lambda^H},\omega  \Big)
\end{equation*}
where $\kappa \in \II$ satisfies condition \ref{list:finite_E_integral1} from Theorem \ref{teorem:theorem_linear_BSDEs} and $f$ is a function $f: [0,T] \times \mathbb{R} \times \mathbb{R} \times \mathbb{R}\times \Omega \to \mathbb{R}$ which satisfies, for some $K_f>0$,
\begin{equation}
% \item $q\to h(t,y,\sigma,q)$ is non-decreasing for all $(t,\sigma,z) \in [0,T]\times \mathbb{R} \times \mathbb{R}$.
|f_t(y,b,h) - f_t(y',b',h')|  \leq K_f\Big( |y-y'| + |b-b'| + |h-h'| \Big), \label{eq:h_lipschitz} \\
\end{equation}
$dt\times d\Prob$ a.e. and 
\begin{equation*}
\E \Big[ \int\limits_0^T |f_t(0,0,0)|^2 \ins dt \Big] < \infty. % \label{eq:h_integrable} 
\end{equation*}
% 
% \begin{align}
% % \item $q\to h(t,y,\sigma,q)$ is non-decreasing for all $(t,\sigma,z) \in [0,T]\times \mathbb{R} \times \mathbb{R}$.
% |f_t(y,b,h) - f_t(y',b',h')| & \leq K_f\Big( |y-y'| + |b-b'| + |h-h'| \Big), \quad dt\times d\Prob \text{ a.e.} \label{eq:h_lipschitz}, \\
% \E \Big[ \int\limits_0^T |f_t(0,0,0)|^2 \ins dt \Big] &< \infty. % \label{eq:h_integrable} 
% \end{align}
%
If $\xi^{(1)} \leq \xi^{(2)}$ $\Prob$-a.s. and $g^{(1)}_s(\lambda_s,Y^{(1)}_s,\phi^{(1)}_s ) \leq g^{(2)}_s(\lambda_s,Y^{(1)}_s,\phi^{(1)}_s )$ $dt\times d\Prob$-a.e., then
\begin{equation*}
Y^{(1)}_t \leq Y^{(2)}_t  \quad dt\times d\Prob \text{-a.e.}
\end{equation*}
\end{teorem}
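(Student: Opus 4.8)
The plan is to reduce the comparison to the explicit representation for linear BSDE's from Theorem \ref{teorem:theorem_linear_BSDEs}, applied to the difference of the two solutions. First I would set $\bar Y = Y^{(2)} - Y^{(1)}$, $\bar\phi = \phi^{(2)} - \phi^{(1)}$ and $\bar\xi = \xi^{(2)} - \xi^{(1)} \geq 0$. Subtracting the two equations, $\bar Y$ solves
\begin{align*}
-d\bar Y_t &= \Big[ g^{(2)}_t(\lambda_t, Y^{(2)}_t, \phi^{(2)}_t) - g^{(1)}_t(\lambda_t, Y^{(1)}_t, \phi^{(1)}_t) \Big] \ins dt - \bar\phi_t(0)\ins dB_t - \int\limits_\Rr \bar\phi_t(z)\ins \Ht(dt,dz), \\
\bar Y_T &= \bar\xi.
\end{align*}
I would then split the driver difference as $\big[ g^{(2)}_t(\lambda_t,Y^{(2)}_t,\phi^{(2)}_t) - g^{(2)}_t(\lambda_t,Y^{(1)}_t,\phi^{(1)}_t) \big] + C_t$, where $C_t := g^{(2)}_t(\lambda_t,Y^{(1)}_t,\phi^{(1)}_t) - g^{(1)}_t(\lambda_t,Y^{(1)}_t,\phi^{(1)}_t) \geq 0$ by hypothesis, $dt\times d\Prob$-a.e.

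Next I would linearize the first bracket. Because $g^{(2)}_t = f_t(y,b,h)$ depends on the noise coefficients only through the two scalars $b = \phi(0)\kappa_t(0)\sqrt{\lambda^B}$ and $h = \int_\Rr \phi(z)\kappa_t(z)\ins\nu(dz)\sqrt{\lambda^H}$, the Lipschitz bound \eqref{eq:h_lipschitz} lets me write, through the usual difference-quotient device (inserting intermediate values and bounding each incremental ratio by $K_f$),
\begin{equation*}
g^{(2)}_t(\lambda_t,Y^{(2)}_t,\phi^{(2)}_t) - g^{(2)}_t(\lambda_t,Y^{(1)}_t,\phi^{(1)}_t) = A_t \bar Y_t + \beta_t\, \bar\phi_t(0)\kappa_t(0)\sqrt{\lambda^B_t} + \gamma_t \int\limits_\Rr \bar\phi_t(z)\kappa_t(z)\ins\nu(dz)\sqrt{\lambda^H_t},
\end{equation*}
where $A_t,\beta_t,\gamma_t$ are $\GG$-adapted and dominated by $K_f$. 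Setting $E_t(0) := \beta_t\kappa_t(0)$ and $E_t(z) := \gamma_t\kappa_t(z)$, this exhibits $\bar Y$ as the solution of the linear equation \eqref{eq:linearBSDE} with coefficients $A$, $C$, $E(0)$, $E(z)$.

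The real work is to check that $(A,C,E)$ meet hypotheses i)--iv) of Theorem \ref{teorem:theorem_linear_BSDEs}. Boundedness of $A$ and of $E_t(0)$ follows from $|A_t|,|\beta_t|\leq K_f$ and $|\kappa_t(0)|<K_\kappa$; that $C\in\Hg$ follows since both parameter sets are standard; and $E\in\II$ follows from $E_t(z)=\gamma_t\kappa_t(z)$ with $|\gamma_t|\leq K_f$ and $\kappa\in\II$. The delicate point, which I expect to be the \emph{main obstacle}, is condition iv), namely $0\leq E_t(z)<K_E z$. Since $\kappa$ itself satisfies iv) we have $0\leq\kappa_t(z)<K_\kappa z$, so the estimate reduces to controlling the sign of the jump-sensitivity: one needs $\gamma_t\geq 0$, so that $E_t(z)\geq 0$, the logarithm defining $\Gamma$ stays real, and the resulting exponential is strictly positive; the size bound $E_t(z)<K_f K_\kappa z =: K_E z$ is then automatic. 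This nonnegativity is exactly the monotonicity requirement familiar from jump-BSDE comparison theorems, and here it is supplied by funneling the jump dependence of $f$ through the single nonnegative kernel $\kappa$ together with monotonicity of $f$ in its third argument; making this step airtight is where the argument must be done carefully.

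Once iv) holds, Theorem \ref{teorem:theorem_linear_BSDEs} yields the closed form
\begin{equation*}
\bar Y_t = \E\Big[ \bar\xi\, \Gamma_T(t) + \int\limits_t^T \Gamma_s(t)\, C_s \ins ds \ins \Big| \Gg_t \Big], \quad t\in[0,T],
\end{equation*}
with $\Gamma_s(t)$ the strictly positive exponential of that theorem. Since $\bar\xi\geq 0$, $C_s\geq 0$ and $\Gamma_s(t)>0$, the conditional expectation is nonnegative, whence $\bar Y_t\geq 0$, i.e. $Y^{(1)}_t\leq Y^{(2)}_t$, $dt\times d\Prob$-a.e. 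I would close by stressing that it is precisely the positivity of $\Gamma$ that converts the sign information on the terminal data and on $C$ into the pointwise comparison.
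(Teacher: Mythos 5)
Your proposal follows essentially the same route as the paper's own proof: set $\bar Y = Y^{(2)}-Y^{(1)}$, linearize the difference of drivers one argument at a time into bounded difference-quotient coefficients (the paper's $A_t$, $F_t$, $D_t$, built from the intermediate fields $\phi^{(2,B)}$, $\phi^{(2,H)}$), isolate $C_t = g^{(2)}_t(\lambda_t,Y^{(1)}_t,\phi^{(1)}_t)-g^{(1)}_t(\lambda_t,Y^{(1)}_t,\phi^{(1)}_t)\ge 0$, and conclude from the explicit representation and the positivity of $\Gamma$ in Theorem \ref{teorem:theorem_linear_BSDEs}. Structurally, nothing in your plan differs from the paper.

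The step you flagged as the main obstacle is, however, a genuine gap as you left it, and your proposed way out is not available. You need $\gamma_t\ge 0$ so that $E_t(z)=\gamma_t\kappa_t(z)\ge 0$, and you say this is supplied by ``monotonicity of $f$ in its third argument''; but no such monotonicity is assumed anywhere in the statement --- $f$ is only Lipschitz. The bound \eqref{eq:h_lipschitz} gives $|\gamma_t|\le K_f$ and says nothing about its sign. If $\gamma_t<0$ then $E_t(z)\le 0$, condition \ref{list:finite_E_integral1} of Theorem \ref{teorem:theorem_linear_BSDEs} fails, and the quantity $\ln\big(1+E_u(z)\ind_{\{\lambda^H_u\neq 0\}}/\sqrt{\lambda^H_u}\big)$ defining $\Gamma$ need not even be well defined (it breaks down whenever $\gamma_u\kappa_u(z)\le-\sqrt{\lambda^H_u}$), so the positivity that drives the whole comparison is lost; this is exactly the classical failure mode of comparison theorems for BSDEs with jumps. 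You should know that the paper's own proof is no more careful at this exact point: it asserts without justification that $F\kappa(0)+D\kappa(z)\ind_{\Rr}(z)$ satisfies \ref{list:finite_E_integral1}, and the sign of $D_t$ there is open to precisely the same objection. So your write-up reproduces the published argument and is, to your credit, more candid about where it creaks; but closing the hole requires an additional hypothesis (for instance that $f$ is nondecreasing in its third argument, which makes $\gamma_t\ge 0$ by construction), not merely more care in the same computation.
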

It can be shown that $g^{(2)}$ does indeed satisfy conditions \eqref{eq:f_cond0}-\eqref{eq:f_cond1}-\eqref{eq:f_cond2} in Definition \ref{definisjon:standard_parameters}, recall in particular \eqref{eq:holder_linear}.
\begin{proof}
Define $\bar{Y}_t := Y^{(2)}_t-Y^{(1)}_t$, $\bar{\phi}_t := \phi^{(2)}_t-\phi^{(1)}_t$, 
\begin{align*}
\phi^{(2,H)}_t(z) &:= \phi_t^{(2)} \ind_{\{z\neq 0\}} + \phi_t^{(1)}\ind_{\{z=0\}},\\
\phi^{(2,B)}_t(z) &:= \phi_t^{(2)} \ind_{\{z= 0\}} + \phi_t^{(1)}\ind_{\{z\neq 0\}},
\end{align*}
and
\begin{align*}
C_t &:= g^{(2)}_t (\lambda_t,Y^{(1)}_t,\phi_t^{(1)}) - g^{(1)}_t(\lambda_t,Y^{(1)}_t,\phi_t^{(1)}),   \displaybreak[0] \\
A_t &:= \frac{ g^{(2)}_t(\lambda_t,Y^{(2)}_t,\phi_t^{(1)}) - g^{(2)}_t(\lambda_t,Y^{(1)}_t,\phi_t^{(1)}) }{  \bar{Y}_t }\ind_{\{ \bar{Y}_t \neq 0\}}, \displaybreak[0]   \\
D_t &:= \frac{ g^{(2)}_t(\lambda_t,Y^{(2)}_t,\phi_t^{(2,H)}) - g^{(2)}_t(\lambda_t,Y^{(2)}_t,\phi_t^{(1)})}{ \int\limits_\Rr \kappa_t(z) \bar{\phi}_t(z) \ins \nu(dz) \sqrt{\lambda_t^H}  } \ind_{\{\int\limits_\Rr \kappa_t(z) \bar{\phi}_t(z)  \ins \nu(dz) \sqrt{\lambda_t^H}  \neq 0 \} }, \displaybreak[0] \\
F_t &:= \frac{ g^{(2)}_t(\lambda_t,Y^{(2)}_t,\phi^{(2,B)}_t) - g^{(2)}_t(\lambda_t,Y^{(2)}_t,\phi_t^{(1)})}{  \kappa_t(0) \bar{\phi}_t(0)  \sqrt{\lambda_t^B}  } \ind_{\{ \kappa_t(0) \bar{\phi}_t(0)  \sqrt{\lambda_t^B}  \neq 0 \} }.
\end{align*}
Then
\begin{align}
-d \bar{Y}_t =&\; \Big[ A_t \bar{Y}_t + C_t + F_t \kappa_t(0) \bar{\phi}_t(0)  \sqrt{\lambda_t^B} + D_t \int\limits_\Rr \kappa_t(z) \bar{\phi}_t(z) \ins \nu(dz) \sqrt{\lambda_t^H} \Big]dt \nonumber \\
&-\bar{\phi}_t(0) \ins dB_t - \int\limits_\Rr \bar{\phi}_t(z) \ins\Ht(dt,dz),  \nonumber \\
\bar{Y}_T =&\; \xi_2-\xi_1. \label{eq:bar_Y_comparison}
\end{align}
The processes $A$, $D$ and $F$ are bounded due to the Lipschitz condition \eqref{eq:h_lipschitz}, and $C \in \Hg$ since it is a difference of functions in $\Hg$. It follows that $F\kappa(0)+ D\kappa(z)\ind_{\Rr}(z)$ satisfies \ref{list:finite_E_integral1} in Theorem \ref{teorem:theorem_linear_BSDEs}.

Thus the assumptions of Theorem \ref{teorem:theorem_linear_BSDEs} are satisfied. The BSDE in \eqref{eq:bar_Y_comparison} has solution 
\begin{equation*}
\bar{Y}_t = \E\Big[ \bar{\xi} \Gamma^t_T + \int\limits_t^T \Gamma_s(t) C_s \ins ds \ins\Big| \Gg_t \Big] 
\end{equation*}
which is positive a.s. since $\bar{\xi}$, $\Gamma$ and $C$ are all positive a.s. 

\end{proof}

\section{Sufficient stochastic maximum principle} 

Here we show an application of the BSDE, proving sufficient conditions for an optimal control problem with both $\GG$ and $\FF$-predictable controls. This problem cannot be solved with dynamic programming methods since the state process is, in general, not Markovian.
We consider the optimization problem associated to the performance functional
\begin{equation}
J(u) = \E \Big[ \int\limits_0^T f_t(\lambda_t,u_t,X_{t\minus}) \ins dt + l(X_T) \Big],
\label{eq:performance_functional}
\end{equation}
%
% Here $f:[0,T]\times [0,\infty)^2\times \UU\times \RR \to \RR$, $\UU \subseteq \RR$ is a closed, convex set, and $l:\RR\to \RR$. Furthermore the functions $(t,\lambda,u,x) \to f_t(\lambda,u,x)$ and $x \to l(x)$ are differentiable in $x$ and $l$ is concave.
where $l(x,\omega)$, $x\in \RR$, $\omega\in\Omega$ is a stochastic function concave and differentiable in $x$ a.s. and $f_t(\lambda, u,x,\omega)$, $t\in [0,T]$, $\lambda \in [0,\infty)^2$, $u\in\UU$, $x\in \RR$, $\omega\in\Omega$ is a stochastic function differentiable in $x$ for a.s. Here $\UU \subseteq \RR$ is a closed, convex set. 
The state process $X_t$, $t\in [0,T]$, has the form
\begin{align}
dX_t &= b_t(\lambda_t,u_t, X_{t\minus}) \ins dt + \int\limits_\RR \kappa_t(z,\lambda_t, u_t,X_{t\minus}) \ins \mu(dt,dz),  \label{eq:state_process}\\
X_0 &\in \RR, \nonumber
\end{align}
where $b_t(\lambda,u,x)$ and $\kappa_t(z,\lambda,u,x)$, $t\in [0,T]$, $\lambda\in [0,\infty)^2$, $z\in\RR$, $u\in\UU$, $x\in\RR$ are $\FF$-adapted stochastic processes differentiable in $x$ a.s. We denote these derivatives $\partial_x b$ and $\partial_x \kappa$ respectively. 
The stochastic process $u_t$, $t\in[0,T]$, is the control. We have the following definition

\begin{definisjon}
The admissible controls are c\`agl\`ad stochastic processes $u: [0,T]\times \Omega \to \UU$, such that $X$ \eqref{eq:state_process} has a unique strong solution,
\begin{equation}
\E \Big[ \int\limits_0^T | f_t( \lambda_t,u_t,X_{t\minus})|^2  \ins dt + | l(X_T) | + | \partial_x l(X_T) |^2 \Big] < \infty,
\label{eq:L^2_performance_functional}
\end{equation}
and for some $K_1>0$ we have
\begin{align}
\Big| \partial_x \kappa_t(0,\lambda_t,u_t,X_{t\minus}) \Big| \sqrt{\lambda^B_t}   &\leq K_1 \quad dt\times d\Prob \text{-a.e}, \label{eq:kappa_der1}\\
\int\limits_{\Rr} \big( \partial_x \kappa_t(z,\lambda_t,u_t,X_{t\minus}) \big)^2\nu(dz) \sqrt{\lambda^H_t}   &\leq K_1 \quad dt\times d\Prob \text{-a.e}, \label{eq:kappa_der2} \\
\big| \partial_x b_t(\lambda_t,u_t,X_{t\minus}) \big| &\leq K_1 \quad dt\times d\Prob \text{-a.e} \label{eq:b_der2}.
\end{align}

The admissible controls are either $\GG$-predictable or $\FF$-predictable and we denote these sets as $\Ag$ and $\Af$ respectively. The couple $(u,X)$ is called an admissible pair.
% We denote $\Ag$ for the $\GG$-predictable admissible controls and $\Af$ for the $\FF$-predictable admissible controls.
\end{definisjon}
Naturally $\Af \subset \Ag$. Remark also that $X$ is $\GG$-adapted if $u \in \Ag$ and $X$ is $\FF$-adapted if $u\in \Af$. Given the performance functional $J$ \eqref{eq:performance_functional} we aim to find an optimal control depending on the information available:
\begin{align}
\sup_{u\in \Ag} J(u) \label{eq:GG_op_problem}\\
\sup_{u\in \Af} J(u) \label{eq:FF_op_problem}.
\end{align}

\medskip
For a detailed discussion on the existence of a strong solution to \eqref{eq:state_process} we refer to \cite{Jacod1979,Jacod1979b}. However the following conditions are sufficient \cite{Jacod1979}: for $u$ a c\`agl\`ad stochastic process
there exist a $K_2>0$ such that
\begin{align}
% labeling refers to the conditions in Jacod1979
\big| \kappa_t(0,\lambda_t,u_t,x)-\kappa_t(0,\lambda_t,u_t,x') \big| &\leq K_2 \big| x-x' \big|  \quad \Prob \text{-a.s.}, \label{eq:Au2} \\ 
% \int\limits_0^t \big| \kappa_t(0,1,u_s,\lambda_s)^2 \big| \ins \lambda_s^B ds &< \infty \quad \Prob \text{-a.s.} \\
\big| \kappa_t(z,\lambda_t,u_t,x) - \kappa_t(z,\lambda_t,u_t,x') \big| &\leq K_2 |z| | x-x' | \quad \text{for $z\neq 0$ $\Prob$-a.s.}, \label{eq:Av2} \\
\big| b_t(\lambda_t,u_t,x) - b_t(\lambda_t,u_t,x') \big| &\leq K_2 | x-x' | \quad \Prob \text{-a.s.}, \label{eq:Aw2} \displaybreak[0] \\
\int\limits_0^T\int\limits_\RR |\kappa_s(z,\lambda_s,u_s,a)|^2 \ins \Lambda(ds,dz) &< \infty \quad \Prob \text{-a.s.}, \label{eq:Au5+Av5} \\
% \big| b(x,u_t,\lambda_t) - b(x',u_t,\lambda_t) \big| & < K \big| x'-x \big| \quad \Prob \text{-a.s.} \label{eq:cond3}\\
\int\limits_0^T \big| b_s(\lambda_s,u_s,a) \big| \ins ds & \leq \infty   \quad \Prob \text{-a.s.}, \label{eq:Aw5}
\end{align}
for some $a\in \RR$, all $t\in [0,T]$ and all $x,x' \in \mathbb{R}$.

\medskip
We define the Hamiltonian, $\Ham: [0,T]\times [0,\infty)^2\times\UU\times \RR\times \RR\times \Phi\times \Omega \to \RR$ (where $\Phi$ is defined in \eqref{eq:Phi}), by 
\begin{align*}
\Ham_t(\lambda,u,x,y,\phi) =&\; f_t(\lambda,u,x) + b_t(\lambda,u,x) y + \kappa_t(0,\lambda,u,x) \phi(0) \lambda^B \\
&+ \int\limits_\Rr \kappa_t(z,\lambda,u,x) \phi(z) \ins \lambda^H \ins \nu(dz).
\end{align*}
Corresponding to the admissible pair $(u,X)$ is the couple $(Y,\phi)$, which is the solution to the BSDE of type \eqref{eq:BSDE_intro}
\begin{align}
Y_t &= \partial_x l(X_T)+ \int\limits_t^T \partial_x \Ham_s(\lambda, u_s, X_{s\minus}, Y_{s\minus},\phi_s)\ins ds - \int\limits_t^T \int\limits_\RR \phi_s(z) \ins \mu (ds,dz), \nonumber \\
Y_T &= \partial_x l(X_T).
\label{eq:adjoint_equation}
\end{align}
Here $\partial_x \Ham_t = \frac{\partial}{\partial x} \Ham_t(\lambda, u, x,y, \phi)$ and we note that $\Ham$ is differentiable in $x$ by the assumptions on $f$, $g$ and $\kappa$. The above conditions, \eqref{eq:L^2_performance_functional}-\eqref{eq:kappa_der1}-\eqref{eq:kappa_der2}-\eqref{eq:b_der2}, ensure that the pair $\big( \partial_x \Ham, \partial_x l(X_T)\big)$ are standard parameters (Definition \ref{definisjon:standard_parameters}). By Theorem \ref{teorem:BSDE_existence_uniqueness} the BSDE \eqref{eq:adjoint_equation} has a unique solution $(Y,\phi)$. % In particular \eqref{eq:kappa_der1}, \eqref{eq:kappa_der2} together with Cauchy's inequality and \eqref{eq:b_der2} ensure that the Lipschitz conditions \eqref{eq:f_cond2} are satisfied.

\smallskip
In the sequel we set $\hat{b}_s = b_s(\lambda_s,\hat{u}_s,\hat{X}_{s\minus})$, etc. for the coefficients associated with the admissible pair $(\hat{u},\hat{X})$ with solution $(\hat{Y},\hat{\phi})$ of the adjoint equation \eqref{eq:adjoint_equation}. Set $b_s = b_s(\lambda_s,u_s,X_{s\minus})$ and so forth for the coefficents associated to another arbitrary, admissible pair $(u,X)$. In addition $\hat{\Ham}_s(u,x)= \Ham_s(\lambda_s,u,x,\hat{Y}_{s\minus},\hat{\phi}_{s})$. % and $\Ham_s = \Ham_s(X_s,u_s,\hat{Y}_s,\hat{\phi}_s,\lambda_s)$. %We remark that for the  both are functions of $(\hat{Y},\hat{\phi}$
\begin{teorem}
Let $\hat{u}\in\Ag$. %Denote the corresponding state process as $\hat{X}$ \eqref{eq:state_process} and the solution of the adjoint equation \eqref{eq:adjoint_equation} as $(\hat{Y},\hat{\phi})$. 
Assume that
\begin{equation}
\E \Big[ \int\limits_0\int\limits_\RR \big|  \hat{Y}_{s\minus} \big( \hat{\kappa}_s(z)-\kappa_s(z) \big) \big|^2 +  \big| \big(\hat{X}_{s\minus} - X_{s\minus} \big) \hat{\phi}_s(z) \big|^2 \ins \Lambda(ds,dz) \Big] < \infty
\label{:eq:optimization_integrability}
\end{equation}
for all $u\in\Ag$. If 
\begin{equation}
h_t(x) = \max_{u\in \UU}  \Ham_t(\lambda_t,u, x, \hat{Y}_{t\minus}, \hat{\phi}_t ) % = \sup_{u\in \Ag} \Ham_t(\hat{X}_t, u_t, \hat{Y}_t, \hat{\phi}_t )  
\label{eq:Ham_with_u_in_U}
\end{equation} 
exists and is a concave function in $x$ for all $t\in [0,T]$ $\Prob$-a.s., and
\begin{equation}
\Ham_t(\lambda_t, \hat{u}_t,\hat{X}_{t\minus}, \hat{Y}_{t\minus}, \hat{\phi}_t ) = h_t(\hat{X}_t) % \sup_{u\in\UU} \Ham_t(\lambda_t,u,\hat{X}_{t\minus}, \hat{Y}_{t\minus}, \hat{\phi}_t ) 
\label{eq:u_GG_sup}
\end{equation}
for all $t\in [0,T]$, then $\hat{u}$ is optimal for \eqref{eq:GG_op_problem} and $(\hat{u},\hat{X})$ is an optimal pair. 
\label{teorem:optimal_control_GG}
\end{teorem}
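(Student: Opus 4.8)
The plan is to follow the classical comparison scheme for sufficient maximum principles, adapted to the martingale random field $\mu$. Fix an arbitrary admissible pair $(u,X)$ with $u\in\Ag$ and write $\Delta X_s := \hat{X}_s - X_s$, so that $\Delta X_0 = 0$. I would first split the difference of performance functionals as $J(\hat{u})-J(u) = \E\big[\int_0^T (\hat{f}_s - f_s)\ins ds\big] + \E\big[l(\hat{X}_T) - l(X_T)\big]$. Concavity and differentiability of $l$ give $l(\hat{X}_T)-l(X_T)\ge \partial_x l(\hat{X}_T)\,\Delta X_T = \hat{Y}_T\,\Delta X_T$, using the terminal condition $\hat{Y}_T = \partial_x l(\hat{X}_T)$ of the adjoint equation \eqref{eq:adjoint_equation}. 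Hence it suffices to prove $\E\big[\int_0^T(\hat{f}_s - f_s)\ins ds\big] + \E\big[\hat{Y}_T\,\Delta X_T\big]\ge 0$.

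Next I would evaluate $\E[\hat{Y}_T\,\Delta X_T]$ by the It\^o product rule applied to $\hat{Y}_s\,\Delta X_s$. The drift of $\hat{Y}$ is $-\partial_x\hat{\Ham}_s$ with $\mu$-integrand $\hat{\phi}_s$, while $\Delta X$ has drift $\hat{b}_s - b_s$ and $\mu$-integrand $\hat{\kappa}_s - \kappa_s$. Taking expectations, the two integrals against $\mu$ are $\GG$-martingales that vanish under the integrability hypothesis \eqref{:eq:optimization_integrability}, and the predictable covariation of the two martingale parts contributes $\E\big[\int_0^T\int_\RR \hat{\phi}_s(z)\big(\hat{\kappa}_s(z)-\kappa_s(z)\big)\ins\Lambda(ds,dz)\big]$ by polarization of the isometry \eqref{eq:isometry_general}. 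This gives $\E[\hat{Y}_T\Delta X_T] = \E\big[\int_0^T\big(\hat{Y}_{s\minus}(\hat{b}_s - b_s) - \Delta X_{s\minus}\,\partial_x\hat{\Ham}_s\big)\ins ds + \int_0^T\int_\RR \hat{\phi}_s(z)\big(\hat{\kappa}_s(z)-\kappa_s(z)\big)\ins\Lambda(ds,dz)\big]$.

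I would then expand $\hat{f}_s - f_s$ through the definition of $\Ham$. Writing $\hat{\Ham}_s := \Ham_s(\lambda_s,\hat{u}_s,\hat{X}_{s\minus},\hat{Y}_{s\minus},\hat{\phi}_s)$ and $\Ham_s := \Ham_s(\lambda_s, u_s, X_{s\minus}, \hat{Y}_{s\minus}, \hat{\phi}_s)$ (both with the \emph{candidate's} adjoint processes), one reads off $\hat{f}_s - f_s = (\hat{\Ham}_s - \Ham_s) - (\hat{b}_s - b_s)\hat{Y}_{s\minus} - \big[(\hat{\kappa}_s(0)-\kappa_s(0))\hat{\phi}_s(0)\lambda^B_s + \int_\Rr(\hat{\kappa}_s(z)-\kappa_s(z))\hat{\phi}_s(z)\lambda^H_s\ins\nu(dz)\big]$. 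Adding $\E\big[\int_0^T(\hat{f}_s-f_s)\ins ds\big]$ to the expression for $\E[\hat{Y}_T\Delta X_T]$, the two $(\hat{b}_s-b_s)\hat{Y}_{s\minus}$ terms and the two $\kappa$-difference terms cancel exactly, leaving $J(\hat{u})-J(u)\ge \E\big[\int_0^T \big((\hat{\Ham}_s - \Ham_s) - \Delta X_{s\minus}\,\partial_x\hat{\Ham}_s\big)\ins ds\big]$.

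Finally I would close the argument with the maximum condition. Since $\Ham_s \le h_s(X_{s\minus})$ by definition of $h$ while $\hat{\Ham}_s = h_s(\hat{X}_{s\minus})$ by \eqref{eq:u_GG_sup} (identifying $\hat{X}_{s\minus}=\hat{X}_s$ for a.e.\ $s$), we get $\hat{\Ham}_s - \Ham_s \ge h_s(\hat{X}_{s\minus}) - h_s(X_{s\minus})$. The map $x\mapsto h_s(x) - \Ham_s(\lambda_s,\hat{u}_s, x, \hat{Y}_{s\minus},\hat{\phi}_s)$ is nonnegative and vanishes at $x=\hat{X}_{s\minus}$, so its derivative there is $0$, yielding the envelope identity $\partial_x h_s(\hat{X}_{s\minus}) = \partial_x\hat{\Ham}_s$; concavity of $h_s$ then gives $h_s(\hat{X}_{s\minus}) - h_s(X_{s\minus}) \ge \partial_x h_s(\hat{X}_{s\minus})\,\Delta X_{s\minus} = \partial_x\hat{\Ham}_s\,\Delta X_{s\minus}$. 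The integrand in the last display is therefore nonnegative and $J(\hat{u})\ge J(u)$. I expect the main obstacle to be the bookkeeping in the It\^o product formula, especially matching the covariation term to the $\kappa$-difference terms so that the cancellation is exact, together with justifying the envelope identity $\partial_x h_s(\hat{X}_{s\minus}) = \partial_x\hat{\Ham}_s$ rigorously from the pointwise maximum condition.
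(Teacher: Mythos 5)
Your proposal is correct and follows essentially the same route as the paper's proof: concavity of $l$ plus the terminal identity $\hat{Y}_T=\partial_x l(\hat{X}_T)$, the It\^o product rule on $\hat{Y}(\hat{X}-X)$ with the martingale terms killed by \eqref{:eq:optimization_integrability} and the covariation matching the $\kappa$-difference terms, the Hamiltonian expansion of $\hat{f}-f$ with exact cancellation, and the concavity/envelope argument of \cite{Framstad2004} and \cite[page 108]{Seierstad1987} (the paper phrases your envelope identity via a separating hyperplane, obtaining a supergradient $a$ of $h_t$ at $\hat{X}_{t\minus}$ and showing $a=\partial_x\hat{\Ham}_t$, which is the rigorous version of your first-order condition when $h_t$ is not known to be differentiable). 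No gaps beyond that minor point, which you yourself flagged.
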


\begin{proof}
We proceed as in \cite{Framstad2004}. Recall that for $l$ concave and differentiable we have $l(x_2) - l(x_1) \geq \partial_x l(x_2)(x_2-x_1)$, $x_1,x_2 \in \RR$. Thus, by It\^o's formula, \eqref{:eq:optimization_integrability} and the fact that $\hat{X}_0-X_0 = 0$, we have
\begin{align*}
\E &\Big[ l(\hat{X}_T) - l(X_T) \Big] \\
\geq&\; \E \Big[ \partial_x l(\hat{X}_T) \big(\hat{X}_T - X_T \big)\Big] \\
=&\; \E \Big[ \hat{Y}_T \big(\hat{X}_T - X_T \big)\Big] \displaybreak[0]\\
=&\; 
\begin{aligned}[t]
&\E \Big[ \int\limits_0^T - \big( \hat{X}_{s\minus} - X_{s\minus} \big) \partial_x \hat{\Ham}_s(\hat{u}_s,\hat{X}_{s\minus}) + \hat{Y}_{s\minus} \big(\hat{b}_s- b_s \big) \ins ds \\
&+ \int\limits_0^T\int\limits_{\RR} \Big\{ \hat{Y}_{s\minus} \big( \hat{\kappa}_s(z)-\kappa_s(z) \big) + \big(\hat{X}_{s\minus} - X_{s\minus} \big) \hat{\phi}_s(z) \Big\} \ins \mu(ds,dz) \\
&+ \int\limits_0^T \big( \hat{\kappa}_s(0)-\kappa_s(0) \big) \hat{\phi}_s(0) \ins \lambda^B_s \ins ds 
+ \int\limits_0^T\int\limits_\Rr \big( \hat{\kappa}_s(z)-\kappa_s(z) \big) \hat{\phi}_s(z) \ins H(ds,dz) \Big] 
\end{aligned}\displaybreak[0] \\
=&\;
\begin{aligned}[t]
& \E \Big[ \int\limits_0^T -\big( \hat{X}_{s\minus} - X_{s\minus} \big) \partial_x \hat{\Ham}_s(\hat{u}_s,\hat{X}_{s\minus}) + \hat{Y}_{s\minus} \big(\hat{b}_s- b_s \big) \ins ds \\
& + \int\limits_0^T\int\limits_\RR \big( \hat{\kappa}_s(z)-\kappa_s(z)\big) \hat{\phi}_s(z) \ins \Lambda(ds,dz) \Big].
\end{aligned}
\end{align*}
We remark that $\phi$ is integrable with respect to $H \times\Prob$ by \eqref{eq:Av2}, Cauchy's inequality and \eqref{:eq:optimization_integrability}.
Furthermore, from the Hamiltonian, we have
\begin{align*}
\E& \Big[ \int\limits_0^T \big\{ \hat{f}_s - f_s \big\} \ins ds \Big] \displaybreak[0] \\
&= \begin{aligned}[t]
&\E\Big[ \int\limits_0^T \Big\{ \hat{\Ham}_s(\hat{u}_s,\hat{X}_{s\minus}) -\hat{\Ham}_s(u_s,X_{s\minus}) - \big(\hat{b}_s -b_s\big) \hat{Y}_{s\minus}\\
&- \big(\hat{\kappa}_s(0)-\kappa_s(0) \big)\hat{\phi}_s(0) \lambda^B_s - \int\limits_\Rr \big( \hat{\kappa}_s(z)-\kappa_s(z)\big) \hat{\phi}_s(z) \ins \nu(dz) \lambda^H_s  \Big\} \ins ds \Big] \\
\end{aligned} \displaybreak[0] \\
&=\begin{aligned}[t] &\E\Big[ \int\limits_0^T \Big\{ \hat{\Ham}_s(\hat{u}_s,\hat{X}_{s\minus}) - \hat{\Ham}_s(u_s,X_{s\minus}) -\big(\hat{b}_s -b_s\big) \hat{Y}_{s\minus} \Big\} \ins ds \\ 
&- \int\limits_0^T\int\limits_\RR \big( \hat{\kappa}_s(z)-\kappa_s(z)\big)\hat{\phi}_s(z) \ins \Lambda(ds,dz) \Big].
\end{aligned}
\end{align*}
Hence
\begin{align}
J(\hat{u})-J(u) \geq&\; \E \Big[ \int\limits_0^T \Big\{ \hat{\Ham}_s(\hat{u}_s,\hat{X}_{s\minus}) \nonumber \\
&-\hat{\Ham}_s(u_s,X_{s\minus})-\big( \hat{X}_{s\minus} - X_{s\minus} \big) \partial_x \hat{\Ham}_s(\hat{u}_s,\hat{X}_{s\minus}) \Big\} \ins ds \Big]. \label{optimal_inequality} 
%&\geq \begin{aligned}[t]
%\E \Big[& \int\limits_0^T \Ham(X_s,\hat{u}_s,\hat{Y}_s,\hat{\phi}_s,\lambda_s)-\Ham(\hat{X}_s,\hat{u}_s,\hat{Y}_s,\hat{\phi}_s,\lambda_s) \\
%&+\Ham(\hat{X}_s,\hat{u}_s,\hat{Y}_s,\hat{\phi}_s,\lambda_s)-\Ham(X_s,u_s,\hat{Y}_s,\hat{\phi}_s,\lambda_s)   \ins ds \Big] 
%\end{aligned} \nonumber \\
%&= \E \Big[ \int\limits_0^T \Ham(X_s,\hat{u}_s,\hat{Y}_s,\hat{\phi}_s,\lambda_s)-\Ham(X_s,u_s,\hat{Y}_s,\hat{\phi}_s,\lambda_s) \ins ds \Big]
\end{align}
The integrand % $\Ham(X_s,\hat{u}_s,\hat{Y}_s,\hat{\phi}_s,\lambda_s)-\Ham(X_s,u_s,\hat{Y}_s,\hat{\phi}_s,\lambda_s)$ 
in \eqref{optimal_inequality} is non-negative $dt \times d\Prob$-a.e. by the maximality of $\hat{u}$ \eqref{eq:u_GG_sup} and the concavity of $h_t$, see \cite[page 108]{Seierstad1987}. Hence $\hat{u}$ is also an optimal control by inequality \eqref{optimal_inequality}. We sketch the last part of the argument for completeness. From \eqref{eq:Ham_with_u_in_U} and \eqref{eq:u_GG_sup} we have $h_t(\hat{X}_{t\minus})=\hat{\Ham}_t(\hat{u_t},\hat{X}_{t\minus})$. Thus
\begin{equation}
\hat{\Ham}_t(u,x)-\hat{\Ham}_t(\hat{u}_t,\hat{X}_{t\minus}) \leq h_t(x) - h_t(\hat{X}_{t\minus}), \quad \text{for all }(t,u,x).
\label{eq:Framstad21} 
\end{equation}
To prove that the integrand in \eqref{optimal_inequality} is non-negative it is sufficient to show that almost surely
\begin{equation}
 h_t(X_{t\minus}) - h_t(\hat{X_{t\minus}}) -\partial_x \hat{\Ham}_t(\hat{u}_t,\hat{X}_{t\minus}) \big( X_{t\minus} - \hat{X}_{t\minus}\big) \leq 0.
\label{eq:Framstad22}
\end{equation}
Fix $t\in [0,T]$. Since $x\to h_t(x)$ is concave, it follows by a separating hyperplane argument that there exists $a\in \RR$ such that
\begin{equation}
 h_t(x)- h_t(\hat{X}_{t\minus})-a(x-\hat{X}_{t\minus}) \leq 0, \quad\text{for all $x$}.
\label{eq:Framstad23} 
\end{equation}
Define 
\begin{equation*}
\rho(x) :=  \hat{\Ham}_t(u_t,x)- \hat{\Ham}_t(\hat{u}_t,\hat{X}_{t\minus})-a(x-\hat{X}_{t\minus}).
% \label{eq:Framstad23} 
\end{equation*}
By \eqref{eq:Framstad21} and \eqref{eq:Framstad23} $\rho(x) \leq 0$ for all $x$. Clearly $\rho(\hat{X}_{t\minus})=0$. Hence $\partial_x \rho(\hat{X}_{t\minus}) = 0$ so that $\partial_x \hat{\Ham}_t(\hat{X}_{t\minus},\hat{u}_t) =a$. Substituting into \eqref{eq:Framstad23} we obtain \eqref{eq:Framstad22}.
\end{proof}

% Note tha
% Note that, as for Theorem \ref{teorem:BSDE_existence_uniqueness}, the solutions of the BSDE \eqref{eq:adjoint_equation}

Recall that the solution of the BSDE \eqref{eq:adjoint_equation} is $\GG$-adapted. However, the other coefficients in \eqref{eq:u_GG_sup} are $\FF$-adapted whenever $u\in \Af$. We use this fact to find an optimal $\FF$-predictable control via projections.
We keep the notation used in the proof of Theorem \ref{teorem:optimal_control_GG}.

\begin{teorem}
Let $\hat{u}\in \Af$. Denote the corresponding state process as $\hat{X}$ with solution $(\hat{Y},\hat{\phi})$ of the adjoint equation \eqref{eq:adjoint_equation}. Assume \eqref{:eq:optimization_integrability} holds.
% Denote the corresponding state process as $\hat{X}$ and the solution of the adjoint equation \eqref{eq:adjoint_equation} as $(\hat{Y},\hat{\phi})$.
% Assume sufficient integrability conditions and the existence of a solution to \eqref{eq:adjoint_equation}.
%Assume $\hat{u}\in \Af$. Denote the corresponding state process $\hat{X}$ and the solution of the adjoint equation \eqref{eq:adjoint_equation} as $(\hat{Y},\hat{\phi})$. 
Denote
\begin{align*}
\Ham_t^{\FF}&(\lambda_t,u,x,\hat{Y}_{t\minus}, \hat{\phi}_t ) :=  \E\Big[ \Ham_t(\lambda_t, u, x,\hat{Y}_{t\minus}, \hat{\phi}_t )  \ins \big| \Ff_t \Big] \nonumber \\
=&\; % \sup_{u\in \UU} \Big\{ 
f_t(\lambda_t,u,x) +  b_t(\lambda_t,u,x) \E\big[ \hat{Y}_{t\minus} \ins \big| \Ff_t \big] + \kappa_t(0,\lambda_t, u,x) 
 \E\big[ \hat{\phi}_t(0) \ins \big| \Ff_t \big] \nonumber \\
&+ \int\limits_\Rr \kappa_t(z,\lambda_t,u,x) \E \big[ \phi_t(z) \ins \big| \Ff_t \big] \ins \lambda_t^H \ins \nu(dz) % \Big\}
\end{align*}
for all $t\in [0,T]$. If 
\begin{equation}
h_t^{\FF}(x) = \max_{u\in \UU}  \Ham_t^{\FF} \big(\lambda_t,u, x, \hat{Y}_{t\minus}, \hat{\phi}_{t} \big)   
\label{eq:h_t^FF}
\end{equation} 
exists and is a concave function in $x$ for all $t\in[0,T]$, and
\begin{equation}
\Ham_t^{\FF}(\lambda_t,\hat{u}_t,\hat{X}_t,\hat{Y}_{t\minus}, \hat{\phi}_t ) =h_t^{\FF}( \hat{X}_t),
\label{eq:u_FF_sup} 
\end{equation}
then $(\hat{u},\hat{X})$ is an optimal pair for \eqref{eq:FF_op_problem}.% for the $\FF$-predictable controls.
\label{teorem:optimal_control_FF}
\end{teorem}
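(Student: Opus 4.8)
The plan is to reduce Theorem \ref{teorem:optimal_control_FF} to the $\GG$-predictable case already established in Theorem \ref{teorem:optimal_control_GG}, by exploiting the fact that when $u\in\Af$ the state process $X$, the coefficients $b$, $\kappa$, $f$, and $\partial_x l(X_T)$ are all $\FF$-adapted, while only the adjoint processes $\hat Y$, $\hat\phi$ carry the extra $\GG$-information through $\Ff^\Lambda$. First I would repeat verbatim the integration-by-parts computation from the proof of Theorem \ref{teorem:optimal_control_GG} up through inequality \eqref{optimal_inequality}, since that step only uses It\^o's formula, the concavity of $l$, the integrability hypothesis \eqref{:eq:optimization_integrability}, and the martingale property of the $\mu$-integral; none of these require the control to be $\GG$-predictable. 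This yields the same lower bound
\begin{equation*}
J(\hat u)-J(u)\geq \E\Big[\int_0^T\Big\{\hat\Ham_s(\hat u_s,\hat X_{s\minus})-\hat\Ham_s(u_s,X_{s\minus})-\big(\hat X_{s\minus}-X_{s\minus}\big)\partial_x\hat\Ham_s(\hat u_s,\hat X_{s\minus})\Big\}\ins ds\Big].
\end{equation*}

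The key new step is to replace the $\GG$-Hamiltonian by its $\FF$-projection inside this expectation. Since $u\in\Af$ and $X$ is $\FF$-adapted, the variables $u_s$, $X_{s\minus}$, $\hat u_s$, $\hat X_{s\minus}$ appearing as the \emph{arguments} of $\Ham$ are $\Ff_s$-measurable (indeed $\Ff_{s}$-measurable by the c\`agl\`ad/predictability assumption), whereas the stochasticity that is genuinely $\GG$ but not $\FF$ enters only through $\hat Y_{s\minus}$ and $\hat\phi_s$. Using the tower property and pulling the $\Ff_s$-measurable factors out of the conditional expectation, I would show that for each fixed admissible $u$,
\begin{equation*}
\E\big[\hat\Ham_s(u_s,X_{s\minus})\ins\big|\Ff_s\big]=\Ham_s^{\FF}(\lambda_s,u_s,X_{s\minus},\hat Y_{s\minus},\hat\phi_s),
\end{equation*}
because $\Ham^{\FF}$ was defined precisely by taking $\E[\,\cdot\,|\Ff_s]$ of the individual coefficients $\hat Y_{s\minus}$, $\hat\phi_s(0)$, $\phi_s(z)$ while leaving $f$, $b$, $\kappa$ (which are already $\Ff_s$-measurable functions of $u_s,X_{s\minus}$) untouched. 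The same identity holds with the hatted control and state. A subtle point needing care is the derivative term: one must verify that $\E[\partial_x\hat\Ham_s(\hat u_s,\hat X_{s\minus})\ins|\Ff_s]=\partial_x\Ham_s^{\FF}(\lambda_s,\hat u_s,\hat X_{s\minus},\hat Y_{s\minus},\hat\phi_s)$, i.e.\ that differentiation in $x$ commutes with the conditional expectation that defines $\Ham^{\FF}$; this is legitimate because $\partial_x$ acts only on $f,b,\kappa$, which are differentiable in $x$, and the $\FF$-projection is applied to the $x$-independent multipliers $\hat Y_{s\minus},\hat\phi_s$.

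Having identified the conditional expectations, I would apply the tower property to the displayed lower bound: conditioning the integrand on $\Ff_s$ and using that $X_{s\minus}$, $\hat X_{s\minus}$ are $\Ff_s$-measurable, the whole bracketed expression reduces to
\begin{equation*}
\hat\Ham^{\FF}_s(\hat u_s,\hat X_{s\minus})-\hat\Ham^{\FF}_s(u_s,X_{s\minus})-\big(\hat X_{s\minus}-X_{s\minus}\big)\partial_x\hat\Ham^{\FF}_s(\hat u_s,\hat X_{s\minus}),
\end{equation*}
where I abbreviate $\hat\Ham^{\FF}_s(u,x)=\Ham^{\FF}_s(\lambda_s,u,x,\hat Y_{s\minus},\hat\phi_s)$. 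The final step is then an exact replica of the concavity/separating-hyperplane argument at the end of the proof of Theorem \ref{teorem:optimal_control_GG}: the maximality condition \eqref{eq:u_FF_sup} gives $h_s^{\FF}(\hat X_{s\minus})=\hat\Ham^{\FF}_s(\hat u_s,\hat X_{s\minus})$ and $\hat\Ham^{\FF}_s(u_s,x)\leq h_s^{\FF}(x)$, and the concavity of $h^{\FF}_t$ produces a subgradient $a$ with $h_s^{\FF}(x)-h_s^{\FF}(\hat X_{s\minus})-a(x-\hat X_{s\minus})\leq 0$, from which $\partial_x\hat\Ham^{\FF}_s(\hat u_s,\hat X_{s\minus})=a$ and hence the integrand is $\leq 0$ (equivalently its negative is non-negative), yielding $J(\hat u)\geq J(u)$ for all $u\in\Af$.

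I expect the main obstacle to be the rigorous justification of interchanging the conditional expectation $\E[\,\cdot\,|\Ff_s]$ with both the multiplication by $\Ff_s$-measurable random coefficients and, more delicately, with the $x$-derivative defining $\Ham^{\FF}$ — this is where one must check measurability of $u_s,X_{s\minus}$ with respect to $\Ff_s$ (guaranteed by $\FF$-predictability of $u$ and $\FF$-adaptedness of $X$) and invoke \eqref{:eq:optimization_integrability} together with the bounds \eqref{eq:kappa_der1}--\eqref{eq:b_der2} to guarantee all the integrability needed for Fubini/tower manipulations and for differentiating under the expectation. Once those technical interchanges are in place the argument is purely a projection of the $\GG$-proof onto $\FF$, exactly as announced in the introduction.
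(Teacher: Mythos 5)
Your proposal is correct and follows essentially the same route as the paper's own proof: reuse the inequality \eqref{optimal_inequality} from Theorem \ref{teorem:optimal_control_GG} (its derivation never uses $\GG$-predictability of the control), then condition the integrand on $\Ff_s$, pull out the $\Ff_s$-measurable factors $u_s$, $X_{s\minus}$, $\hat u_s$, $\hat X_{s\minus}$, $f$, $b$, $\kappa$ so that the projection lands only on $\hat Y_{s\minus}$ and $\hat\phi_s$, and finish with the identical concavity/separating-hyperplane argument applied to $h^{\FF}_t$. Your explicit attention to the commutation of $\partial_x$ with the $\Ff_s$-conditioning is a point the paper handles implicitly in its term-by-term expansion, but it is the same computation.
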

\begin{proof}
The arguments in the proof of Theorem \ref{teorem:optimal_control_GG} leading to
\begin{equation}
J(\hat{u})-J(u) \geq \E \Big[ \int\limits_0^T  \hat{\Ham}_s(\hat{u}_s,\hat{X}_{s\minus}) -\hat{\Ham}_s(u_s,X_{s\minus})-\big( \hat{X}_{s\minus} - X_{s\minus} \big) \partial_x \hat{\Ham}_s(\hat{u}_s,\hat{X}_{s\minus})  \ins ds \Big]
\label{eq:FF_op_calculation2}
\end{equation}
still hold. %We interchange the order of integration and use conditional expectations. 
Since $\hat{u}$ and $u$ are $\FF$-predictable controls the only coefficients in the integrand in \eqref{eq:FF_op_calculation2} that are not $\FF$-adapted are the solution of the adjoint equation $(\hat{Y},\hat{\phi})$ so that
\begin{align}
\E \big[ & \int\limits_0^T  \hat{\Ham}_s(\hat{u}_s,\hat{X}_{s\minus}) -\hat{\Ham}_s(u_s,X_{s\minus}) - \partial_x \hat{\Ham}_s(\hat{u}_s,\hat{X}_{s\minus}) \big( \hat{X}_{s\minus} - X_{s\minus} \big) \ins ds \Big] \displaybreak[0] \nonumber \\
=&\;
\begin{aligned}[t]
& \E \Big[  \int\limits_0^T \hat{f}_s - f_s + \big( \hat{b}_s-b_s \big) \E\big[ \hat{Y}_{s\minus} \ins \big| \Ff_s \big] + \big(\hat{X}_{s\minus}-X_{s\minus}\big)  \partial_x f_s + \partial_x b_s \E\big[ \hat{Y}_{s\minus} \ins \big| \Ff_s \big] \ins ds \Big] \nonumber \\
&+\E \Big[ \int\limits_0^T \int\limits_\RR \Big\{ \big(\hat{\kappa}_s(z) -\kappa_s(z) \big) \E \big[ \hat{\phi}_s(z) \ins \big| \Ff_s \big] \\
&+ \big(\hat{X}_{s\minus}-X_{s\minus}\big) \partial_x \hat{\kappa}_s(z)  \E \big[ \hat{\phi}_s(z) \ins \big| \Ff_s \big] \Big\} \ins \Lambda(ds,dz) \Big] 
\end{aligned}\nonumber \displaybreak[0]\\
&= \E \Big[ \int\limits_0^T \hat{\Ham}_s^\FF(\hat{u}_s,\hat{X}_{s\minus}) - \hat{\Ham}_s^\FF(u_s,X_{s\minus}) -\partial_x \hat{\Ham}_s^\FF(\hat{u}_s,\hat{X}_{s\minus}) \big( \hat{X}_{s\minus} - X_{s\minus} \big) \ins ds \Big].  \label{eq:FF_op_calculation}
\end{align}
The integrand % $\Ham(X_s,\hat{u}_s,\hat{Y}_s,\hat{\phi}_s,\lambda_s)-\Ham(X_s,u_s,\hat{Y}_s,\hat{\phi}_s,\lambda_s)$ 
in \eqref{eq:FF_op_calculation} is non-negative $dt \times d\Prob$-a.e. by the maximality of $\hat{u}$ \eqref{eq:u_FF_sup} and the concavity of $h_t^{\FF}$ \eqref{eq:h_t^FF}. %$\tilde{\Ham}$. 
The argument is the same as in the proof of Theorem \ref{teorem:optimal_control_GG}.
% The integrands on the right hand side of \eqref{eq:FF_op_calculation} is positive $dt \times d\Prob$-a.e. by the maximality of $\hat{u}$ \eqref{eq:u_FF_sup}. Hence the inequality \eqref{eq:FF_op_calculation2} shows that $\hat{u}$ is a optimal solution.
\end{proof}

For a study on necessary maximum principles in the case of time-changed L\'evy noise we refer to \cite{Sjursen2013}, where techniques of non-anticipating stochastic derivatives are used, see \cite{DiNunno2010}.

\section{Optimal portfolio problems}

Here we show how investments in financial assets can be modeled within our framework with a state process suitable for various optimization problems. First we setup the general framework, then consider the specific problems of mean-variance hedging and utility maximization.

We consider two assets, a risk free asset $R$ and a risky asset $S$ defined by
\begin{align*}
dR_t &= \rho_t R_{t\minus} \ins dt, \quad &R_0=1, \\ 
dS_t &= \alpha_t S_{t\minus} \ins dt + S_{t\minus} \int\limits_\RR  \psi_t(z) \ins \mu(dt,dz),\quad &S_0>0.  
\end{align*}
Models of this type include \eqref{eq:Carr_model} (the model from \cite{Carr2003}) and \eqref{eq:Brownian_stochastic_volatility}. Here $\alpha$ and $\rho$ are $\FF$-adapted stochastic processes with $\alpha, \rho: [0,T]\times \Omega \to \RR$ and $\psi\in \II$ is an $\FF$-adapted random field. We assume that $\rho$ is bounded.
Let $z^{(R)}_t$ denote the units of $R$ held at time $t$ and $z^{(S)}_t$ the number of units of $S$ held at time $t$. The wealth process $X_t$, $t\in [0,T]$, is the value of the assets held, 
\begin{equation}
X_t = z^{(R)}_t R_t + z^{(S)}_t S_t.  
\label{eq:wealth_equation}
\end{equation}
We assume that the portfolio is self-financing, \ie that
\begin{equation}
dX_t = z^{(R)}_t dR_t + z^{(S)}_t dS_t.
\label{eq:self_financing}
\end{equation}
Let $u_t=z^{(S)}_t S_t$, $t\in[0,T]$, denote the amount of wealth invested in the risky asset $S$. By \eqref{eq:wealth_equation} and \eqref{eq:self_financing} the wealth equation is given by
\begin{equation}
dX_t = \big[ \rho_t X_t + (\alpha_t-\rho_t)u_t \big] \ins dt + u_{t} \int\limits_\RR \psi_t(z) \ins \mu(dt,dz).
\label{eq:wealth_differential}
\end{equation}
%
% Let $u_t$ denote the amount of wealth invested in the risky asset $S$. If the portfolio is self financing we have that the wealth equation is given by
% \begin{equation*}
% dX_t = \big[ \rho X_t + (\alpha_t-\rho_t)u_t \big] \ins dt + u_{t} \int\limits_\Rr \big[ e^z-1 \big] \ins \Ht(ds,dz).
% \end{equation*}

Clearly assumptions \eqref{eq:kappa_der1}, \eqref{eq:kappa_der2} and \eqref{eq:b_der2} are satisfied. We assume that \eqref{eq:wealth_differential} admits a strong solution. For this we see that the sufficient conditions \eqref{eq:Au2}, \eqref{eq:Av2}, \eqref{eq:Aw2}  are satisfied and together with
\begin{align*}
\int\limits_0^T | \alpha_s -\rho_s | | u_s | \ins ds &< \infty \\ 
\int\limits_0^T \int\limits_\Rr |u_s \psi_s(z) |^2 \ins \Lambda(ds,dz) &<\infty
\end{align*}
$\Prob$-a.s. also \eqref{eq:Au5+Av5} and \eqref{eq:Aw5}. The SDE \eqref{eq:wealth_differential} is of Ornstein-Uhlenbeck type and has solution 
\begin{align}\label{OU}
X_t =&\;  e^{\int_0^t \rho_r \ins dr } \Big( X_0 + \int\limits_0^t e^{- \int_0^s \rho_r \ins dr} (\alpha_s-\rho_s) u_s \ins ds \nonumber \\
& + \int\limits_0^t\int\limits_\RR e^{-\int_0^s \rho_r \ins dr} u_s \psi_s(z) \ins \mu(ds,dz) \Big).
\end{align}

We consider portfolio problems of type \eqref{eq:GG_op_problem}-\eqref{eq:FF_op_problem} associated
\begin{equation}
J(u) = \E \big[ l(X_T) \big] 
\end{equation}
with $l$ as in \eqref{eq:performance_functional}. Problems of this type include utility maximization and mean-variance portfolio selection. Hedging problems are also included, since $l$ is a function of both $\omega$ and $X$ we can consider e.g. the mean-variance hedge by $l(X_T) = -(X_T-Z)^2$ for a square integrable random variable $Z$. The Hamiltonian for this class of problems is 
\begin{align}
\Ham_t(\lambda,u,x,y,\phi) =&\; \big[\rho_t x + (\alpha_t-\rho_t) u \big] y + u \psi_t(0) \phi(0)\lambda_t^B \nonumber \\
& + \int\limits_\Rr \big[ u \psi_t(z) \phi(z) ] \ins \lambda^H_t \nu(dz)
\label{eq:Hamiltonian_financial}
\end{align}
and the associated BSDE is given by
\begin{equation}
Y_t = \partial_x l(X_T) + \int\limits_t^T Y_{s\minus} \rho_s  \ins ds - \int\limits_t^T \int\limits_\RR \phi_s(z)\ins \mu(ds,dz).
\label{eq:portfolio_BSDE}
\end{equation}
By Theorem \ref{teorem:theorem_linear_BSDEs} we also have the representation
\begin{equation}
Y_t = \E \Big[ \partial_x l(X_T) \exp\big\{ \int\limits_t^T \rho_s \ins ds \big\} \Big| \Gg_t \Big].
\label{eq:Y_conditional}
\end{equation}

\subsection{The mean-variance portfolio selection}

Here we discuss mean-variance portfolio selection starting from an initial wealth $x\in\RR$, \ie solve $\inf_{u} \E \big[ (X_T- E[X_T] )^2 \big]$ with $E[X_T] =k$  for some $k\in \RR$ and controls taking values in $\UU=\RR$. For notational convenience we consider the equivalent formulation $J(u)= \E \big[ -\frac{1}{2} \big( X_T- k \big)^2 \big]$ and want to find 
\begin{equation}
\sup_{u\in\Af} J(u) = \sup_{u\in\Af} \E \Big[ -\frac{1}{2} \big( X_T- k \big)^2 \Big].
\label{eq:mean_variance_functional_FF}
\end{equation}
To solve this problem we first consider the optimization on $u\in\Ag$ with deterministic coefficients and apply Theorem \ref{teorem:optimal_control_GG}. To avoid trivial solutions we assume $\alpha_t>\rho_t$ $dt\times d\Prob$ a.e.
%
% We discuss some important implication of Theorem \ref{teorem:G_mean_variance_optimal} before showing its proof. 
\begin{teorem} Assume that $\rho$ and $\alpha$ are deterministic. Consider the feedback control $\hat{u}^\GG \in \Ag$ given by
% Assume \eqref{:eq:optimization_integrability} holds. Then the optimal $\GG$-adapted control for \eqref{eq:mean_variance_functional} is given in feedback form by 
%
\begin{equation*}
\hat{u}_t^\GG = \frac{-\big(\alpha_t-\rho_t\big)\big(A_t \hat{X}_t + C_t\big) }{A_t \big( | \psi_t(0) |^2 \lambda_t^B + \int_\Rr| \psi_t(z) | ^2 \ins \lambda_t^H \ins \nu(dz) \big) },
\end{equation*}
where $\hat X$ refers to \eqref{OU} with $\hat{u}^\GG$ and
\begin{align}
A_t =  - \exp\Big\{- \int\limits_t^T \frac{  (\alpha_s-\rho_s)^2   } {|\psi_s(0)|^2 \lambda_s^B+ \int_\Rr | \psi_s(z) |^2 \ins\lambda_s^H\ins \nu(dz)} - 2\rho_s \ins ds \Big\} \label{eq:Aexplicit} \\
C_t = k \exp\Big\{- \int\limits_t^T \frac{  (\alpha_s-\rho_s)^2   } {|\psi_s(0)|^2 \lambda_s^B+ \int_\Rr | \psi_s(z) |^2  \ins\lambda_s^H\ins \nu(dz)} - \rho_s \ins ds \Big\}. \label{eq:Cexplicit}
\end{align}
If \eqref{:eq:optimization_integrability} holds then $\hat{u}^\GG$ is optimal for $\sup_{u\in\Ag} J(u)$.
\label{teorem:G_mean_variance_optimal}
\end{teorem}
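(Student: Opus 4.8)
The plan is to build the candidate optimal triple $(\hat u^\GG,\hat Y,\hat\phi)$ directly and then verify the hypotheses of the sufficient maximum principle, Theorem \ref{teorem:optimal_control_GG}. The Hamiltonian \eqref{eq:Hamiltonian_financial} is \emph{affine} in $u$: its $u$-coefficient is $(\alpha_t-\rho_t)y+\psi_t(0)\phi(0)\lambda_t^B+\int_\Rr\psi_t(z)\phi(z)\lambda_t^H\nu(dz)$. Since $\UU=\RR$, a finite maximum over $u$ can exist only where this coefficient vanishes, and this first-order condition will both pin down $\hat u^\GG$ and force the maximised Hamiltonian to be affine in $x$. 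Abbreviating $\Sigma_t:=|\psi_t(0)|^2\lambda_t^B+\int_\Rr|\psi_t(z)|^2\lambda_t^H\nu(dz)$, I first posit the linear ansatz $\hat Y_t=A_t\hat X_t+C_t$ with $A,C$ \emph{unknown} $\Ff^\Lambda$-measurable (hence $\Gg_t$-measurable) processes, absolutely continuous in $t$. Applying It\^o's formula to $A_t\hat X_t+C_t$ with the wealth dynamics \eqref{eq:wealth_differential} and comparing against the adjoint BSDE \eqref{eq:portfolio_BSDE}, I read off the integrand $\hat\phi_t(z)=A_t\hat u_t\psi_t(z)$ from the $\mu$-martingale part together with a drift identity from the $dt$-part.

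Next I feed the first-order condition into that drift identity. Solving $(\alpha_t-\rho_t)\hat Y_{t\minus}+\Sigma_tA_t\hat u_t=0$ with $\hat Y_{t\minus}=A_t\hat X_{t\minus}+C_t$ yields the c\`agl\`ad feedback $\hat u_t=-(\alpha_t-\rho_t)(A_t\hat X_t+C_t)/(A_t\Sigma_t)$, which is the asserted formula. Substituting $A_t(\alpha_t-\rho_t)\hat u_t=-(\alpha_t-\rho_t)^2(A_t\hat X_t+C_t)/\Sigma_t$ into the drift identity and separating the part proportional to $\hat X$ from the constant part, the identity collapses into the two decoupled backward ODEs $A_t'=A_t\big(\tfrac{(\alpha_t-\rho_t)^2}{\Sigma_t}-2\rho_t\big)$ and $C_t'=C_t\big(\tfrac{(\alpha_t-\rho_t)^2}{\Sigma_t}-\rho_t\big)$. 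The terminal data $\partial_x l(\hat X_T)=k-\hat X_T$ force $A_T=-1$ and $C_T=k$, and integrating backwards reproduces precisely \eqref{eq:Aexplicit}--\eqref{eq:Cexplicit}. By this construction $(\hat Y,\hat\phi)$ solves \eqref{eq:portfolio_BSDE}, so by the uniqueness in Theorem \ref{teorem:BSDE_existence_uniqueness} it is the adjoint pair attached to $\hat u^\GG$.

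It then remains to confirm admissibility and the two structural hypotheses of Theorem \ref{teorem:optimal_control_GG}. Admissibility holds because $\hat u^\GG$ is affine in $\hat X$, so the closed-loop wealth equation is again linear of the Ornstein--Uhlenbeck type \eqref{OU} and possesses a unique strong solution $\hat X$; condition \eqref{:eq:optimization_integrability} is assumed, while the remaining square-integrability follows from the boundedness of $\rho$ and the moment estimate already used in the proof of Theorem \ref{teorem:theorem_linear_BSDEs}. Because the $u$-coefficient vanishes at $(\hat Y_{t\minus},\hat\phi_t)$, for every $x$ one gets $h_t(x)=\max_{u\in\UU}\Ham_t(\lambda_t,u,x,\hat Y_{t\minus},\hat\phi_t)=\rho_t x\,\hat Y_{t\minus}$, which is linear and hence concave in $x$, so \eqref{eq:Ham_with_u_in_U} holds; evaluating at $\hat u_t$ gives $\hat\Ham_t(\hat u_t,\hat X_{t\minus})=\rho_t\hat X_{t\minus}\hat Y_{t\minus}=h_t(\hat X_{t\minus})$, which is \eqref{eq:u_GG_sup} since $\hat X_t$ and $\hat X_{t\minus}$ agree $dt$-a.e. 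Theorem \ref{teorem:optimal_control_GG} then delivers the optimality of $\hat u^\GG$ for $\sup_{u\in\Ag}J(u)$.

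The main obstacle is the It\^o and coefficient-matching step of the first two paragraphs: one must push the ansatz through \eqref{eq:portfolio_BSDE} keeping careful track of $\hat X_{t\minus}$ versus $\hat X_t$ in the jump and drift terms, and check that inserting the feedback law genuinely decouples the drift identity into the two linear ODEs with exactly the exponents in \eqref{eq:Aexplicit}--\eqref{eq:Cexplicit}. The key point is conceptual rather than computational: the linearity of $\Ham$ in $u$ is precisely what makes $h_t$ concave here, since the maximisation is finite only along the locus where the first-order condition holds, and that same condition is what selects $\hat u^\GG$.
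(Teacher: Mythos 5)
Your proof is correct and takes essentially the same route as the paper's: the linear ansatz $\hat{Y}_t = A_t \hat{X}_t + C_t$ with $\hat{\phi}_t(z) = A_t \hat{u}_t \psi_t(z)$, matched against the adjoint BSDE \eqref{eq:portfolio_BSDE} via It\^o's formula and the uniqueness of Theorem \ref{teorem:BSDE_existence_uniqueness}, together with the vanishing of the $u$-coefficient of the affine Hamiltonian (which makes $h_t$ linear, hence concave) to invoke Theorem \ref{teorem:optimal_control_GG}. The only difference is one of presentation: you derive the backward ODEs for $A$ and $C$ from the drift identity and the first-order condition, while the paper starts from the explicit formulas \eqref{eq:Aexplicit}--\eqref{eq:Cexplicit} and verifies that the resulting identity holds.
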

% We discuss Theorem \ref{teorem:G_mean_variance_optimal} %and \ref{teorem:Optimal_FF_mean_variance} 
% before showing the proofs. 
 % To solve problem \eqref{eq:mean_variance_functional_FF} we use the above result combined with Theorem \ref{teorem:optimal_control_FF}.
Remark that, from \eqref{eq:Aexplicit}-\eqref{eq:Cexplicit}, the processes $A$ and $C$ depend on future values of $\lambda^B$ and $\lambda^H$, hence they are $\GG$-adapted, but in general not $\FF$-adapted. % We consider Theorem \ref{teorem:Optimal_FF_mean_variance} before showing the proof of Theorem \ref{teorem:G_mean_variance_optimal}.
% It follows that the control $\hat{u}_t^\GG$ is in general not $\FF$-adapted. However, by Theorem \ref{teorem:optimal_control_FF} the optimal $\FF$-adapted control is easily found.
%
\smallskip\noindent
Observe that equation \eqref{eq:Y_conditional} gives the useful characterization of the adjoint equation
\begin{align*}
Y_t &= \E\Big[ -(X_T-k)\exp\big\{\int\limits_t^T \rho_s \ins ds \big\} \Big| \Gg_t \Big] \\
&= \big(k-X_t\big) \E\Big[ \exp\big\{\int\limits_t^T \rho_s \ins ds \big\} \Big| \Gg_t \Big] + \E\Big[ (X_t -X_T) \exp\big\{\int\limits_t^T \rho_s \ins ds \big\} \Big| \Gg_t \Big] .
\end{align*}
The study of the above representation hints that the process $Y$ takes the form $Y_t = A_t X_t + C_t$ where $A$ and $C$ are some $\GG$-adapted processes of finite variation and $A_T =-1$, $C_T=k$. Together with some results in the L\'evy case, see e.g. \cite{Framstad2004}, we can take a guess of the structure of $Y$ for an optimal control candidate and use our verification theorem to actually determine it's optimality.
\begin{proof}[Proof of Theorem \ref{teorem:G_mean_variance_optimal}]
Denote $a$ and $c$ as of the derivatives of $A$ and $C$ with respect to $t$, so that $dA_t = a_t \ins dt$ and $dC_t = c_t \ins dt$. Set
\begin{align}
\hat{u}_t &:= - \frac{ \rho_t \big( A_t \hat{X}_t + C_t \big) +A_t \rho_t \hat{X}_t +\hat{X}_t a_t +c_t}{A_t (\alpha_t-\rho_t)},
\label{eq:mean_variance_optimal_u} \\
\hat{Y}_t &:= A_t \hat{X}_t + C_t,
\label{eq:Y_guess}
\end{align}
where $A$ and $C$ are given by \eqref{eq:Aexplicit}-\eqref{eq:Cexplicit}. 
% From \eqref{eq:paper4} substituting \eqref{eq:Y_guess} the $u$ implied by the structure of $Y$ is
We need to show 
\begin{enumerate} 
\item that \eqref{eq:Y_guess} indeed is the process $Y$ in the solution of \eqref{eq:portfolio_BSDE} corresponding to the control $\hat{u}$. %with terminal condition $Y_T = X_T -k$
\label{list:mean-variance_proof1}
\item that $\hat{u}$ \eqref{eq:mean_variance_optimal_u} satisfies \eqref{eq:u_GG_sup} in Theorem \ref{teorem:optimal_control_GG}.
\label{list:mean-variance_proof2}
\end{enumerate}
To prove \ref{list:mean-variance_proof1}, we combine \eqref{eq:wealth_differential} and \eqref{eq:Y_guess} to get
\begin{align}
d\hat{Y}_t =&\; A_t \big( \rho_t \hat{X}_t +(\alpha_t-\rho_t) \hat u_t \big) \ins dt + A_t \hat u_t \int\limits_\RR \psi_t(z) \ins \mu(dt,dz) \nonumber \\
& + \hat{X}_t a_t \ins dt + c_t \ins dt; \qquad \hat Y_T= k-\hat X_T.
\label{eq:Y_version1}
\end{align}
Inserting \eqref{eq:mean_variance_optimal_u} in \eqref{eq:Y_version1} and by the fact that $A_T = -1$ and $C_T = k$, we see that 
\begin{equation}
\hat Y_t  = k- \hat X_T + \int\limits_t^T \hat Y_{s\minus} \rho_s \ins ds - \int\limits_t^T \int\limits_\RR  \ins A_s \hat{u}_s \psi_s(z) \ins\mu(ds,dz). 
\label{eq:portfolio_BSDE2}
\end{equation}
By the uniqueness of the BSDE-solution (Theorem \ref{teorem:BSDE_existence_uniqueness}) we 
see that $(\hat Y, \hat \phi)$ with 
\begin{align}
\hat {\phi}_t(z) &= A_t \hat{u}_t \psi_t(z) , \label{eq:paper3} %\\
%\hat{Y}_{t\minus} \rho_t &= - A_t \rho_t \hat{X}_t - A_t (\alpha_t-\rho_t) \hat{u}_t - \hat{X}_t a_t -c_t.  \label{eq:paper4}  
\end{align}
%
%  with \eqref{eq:paper3}-\eqref{eq:paper4} since $A_T = -1$ and $C_T = k$. 
solves \eqref{eq:portfolio_BSDE} with $\hat u$. Hence \ref{list:mean-variance_proof1} is proved. 

\smallskip
Next we study \ref{list:mean-variance_proof2}. 
The Hamiltonian \eqref{eq:Hamiltonian_financial} is a linear function in $u \in \UU =\RR$ and, composed with $(\hat Y, \hat\phi)$, it is:
\begin{align*} 
\Ham_t(\lambda,u,x,& \hat{Y}_t,\hat{\phi}_t) = \rho_t x \hat Y_t \\
&+ u \Big[ (\alpha_t-\rho_t)  \hat{Y}_t  + \psi_t(0) \hat{\phi}_t(0)\lambda_t^B + \int\limits_\Rr \big[  \psi_t(z) \hat{\phi}_t(z) ] \ins \lambda^H_t \nu(dz) \Big].
\end{align*}
To have \eqref{eq:Ham_with_u_in_U} well defined and \eqref{eq:u_GG_sup} satisfied for the control $\hat u$ we see that
\begin{equation}
 (\alpha_t-\rho_t)\hat{Y}_t  + \psi_t(0) \hat{\phi}_t(0) \lambda^B_t+ \int\limits_\Rr \big[  \psi_t(z) \hat{\phi}_t(z)  \big] \ins \lambda^H_t \nu(dz) = 0
\label{eq:Hamiltonian_repeated}
\end{equation}
is necessary and sufficient.
Indeed we can see that equation \eqref{eq:Hamiltonian_repeated} is satisfied.
Substituting \eqref{eq:Y_guess} and \eqref{eq:paper3} into \eqref{eq:Hamiltonian_repeated}, we obtain the equation:
\begin{equation}
\hat{u}_t = \frac{-(\alpha_t-\rho_t)\big(A_t\hat{X}_t+ C_t\big)}{A_t \big( | \psi_t(0) |^2 \lambda_t^B + \int_\Rr| \psi_t(z) | ^2 \ins \lambda_t^H \ins \nu(dz) \big) }.
\label{eq:expression_u}
\end{equation}
Substituting the definition of $\hat u$ \eqref{eq:mean_variance_optimal_u}, we have
%To verify part \ref{list:mean-variance_proof2}, the characterizations given by \eqref{eq:expression_u} and \eqref{eq:mean_variance_optimal_u} must coincide, \ie we must have
%
\begin{align*}
& \big(\alpha_t-  \rho_t \big)^2 \big( A_t \hat{X}_t +C_t \big)  \\
&=  \Big( 2\rho_t A_t \hat{X}_t +\rho_t C_t + \hat{X}_t a_t +c_t\Big) \Big( | \psi_t(0)| ^2 \lambda_t^B+  \int\limits_\Rr | \psi_t(z) | ^2 \ins \lambda_t^H \ins \nu(dz) \Big),
\end{align*}
which is verified once the definitions of $A$ and $C$ \eqref{eq:Aexplicit}-\eqref{eq:Cexplicit} are inserted.
Hence, in the setting of the theorem, we conclude that $\hat u^{\GG} = \hat u$  \eqref{eq:mean_variance_optimal_u}-\eqref{eq:expression_u} is optimal.
\end{proof}
We can now solve problem \eqref{eq:mean_variance_functional_FF} using similar arguments.
% We can now prove Theorem \ref{teorem:Optimal_FF_mean_variance} using similar arguments.
% To solve problem \eqref{eq:mean_variance_functional_FF} we use the above result combined with Theorem \ref{teorem:optimal_control_FF}. 
% We discuss it before showing the proofs of Theorems \ref{teorem:G_mean_variance_optimal} and \ref{teorem:Optimal_FF_mean_variance}
%
\begin{teorem}
Consider the feedback control $\hat{u}^\FF \in \Af$ given by 
%  Then the optimal $\FF$-adapted control for \eqref{eq:mean_variance_functional} is
\begin{equation}
\hat{u}^\FF_t = - \frac{(\alpha_t-\rho_t) \Big( \E \big[ A_t \big| \Ff_t ] \hat{X}_t + \E \big[ C_t \big| \Ff_t ] \Big) }{ \E\big[ A_t \big| \Ff_t \big] \big(   | \psi_t(0) |^2 \lambda_t^B + \int_\Rr| \psi_t(z) | ^2 \ins \lambda_t^H \ins \nu(dz) \big) },
\label{eq:hatu^FF_mean_variance}
\end{equation} 
where $\hat X$ refers to \eqref{OU} with $\hat{u}^\FF$ and the processes
$A$ and $C$ are given by \eqref{eq:Aexplicit}-\eqref{eq:Cexplicit}. 
If \eqref{:eq:optimization_integrability} holds then $\hat{u}^\FF$ solves \eqref{eq:mean_variance_functional_FF}.
\label{teorem:Optimal_FF_mean_variance}
\end{teorem}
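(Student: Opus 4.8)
The plan is to apply the $\FF$-predictable maximum principle, Theorem \ref{teorem:optimal_control_FF}, to $\hat u^\FF$, just as Theorem \ref{teorem:G_mean_variance_optimal} applied Theorem \ref{teorem:optimal_control_GG} to $\hat u^\GG$. Since the Hamiltonian \eqref{eq:Hamiltonian_financial} is affine in $u$, its $\Ff_t$-projection is affine as well, namely $\Ham_t^\FF(\lambda_t,u,x,\hat Y_{t\minus},\hat\phi_t)=\rho_t x\,\E[\hat Y_{t\minus}|\Ff_t]+u\,\Xi_t$ with
\[
\Xi_t:=(\alpha_t-\rho_t)\E[\hat Y_{t\minus}|\Ff_t]+\psi_t(0)\E[\hat\phi_t(0)|\Ff_t]\lambda_t^B+\int_\Rr\psi_t(z)\E[\hat\phi_t(z)|\Ff_t]\lambda_t^H\,\nu(dz).
\]
Because $\UU=\RR$, the supremum in \eqref{eq:h_t^FF} is finite only when $\Xi_t=0$ $dt\times d\Prob$-a.e.; and once $\Xi_t=0$ the map $x\mapsto h_t^\FF(x)=\rho_t x\,\E[\hat Y_{t\minus}|\Ff_t]$ is affine, hence concave, and every admissible control attains the maximum, so \eqref{eq:u_FF_sup} holds for $\hat u^\FF$. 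The theorem therefore reduces to checking $\Xi_t=0$ for the adjoint $(\hat Y,\hat\phi)$ generated by $\hat u^\FF$.

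To produce this adjoint I would use that $\rho$ is deterministic together with the closed form \eqref{eq:Y_conditional}, so $\hat Y_t=e^{\int_t^T\rho_s ds}\big(k-\E[\hat X_T|\Gg_t]\big)$, and obtain $\hat\phi$ from the martingale representation (Theorem \ref{Teorem:G_martingales}) of the $\GG$-martingale $t\mapsto\E[\hat X_T|\Gg_t]$. Writing $\hat X_T$ from the Ornstein--Uhlenbeck solution \eqref{OU} and splitting it into its $\Gg_t$-measurable part, a forward drift integral, and a forward $\mu$-integral, the $\mu$-integral contributes a term proportional to $\hat u_t^\FF\psi_t(z)$ to $\hat\phi_t(z)$ while the drift integral contributes a further integrand $\gamma_t(z)$. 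Projecting onto $\Ff_t$ by the tower property (valid since $\Ff_t\subseteq\Gg_t$ and $\hat u^\FF$ is $\FF$-predictable, so $\hat X_t$ is $\Ff_t$-measurable) and substituting $\hat u^\FF$ from \eqref{eq:hatu^FF_mean_variance} and the explicit $A,C$ of \eqref{eq:Aexplicit}--\eqref{eq:Cexplicit}, the $\hat X_t$- and $k$-contributions cancel against the $\hat u_t^\FF$-contribution exactly as in the algebra of \eqref{eq:Hamiltonian_repeated}--\eqref{eq:expression_u} with $A_t,C_t$ replaced by $\E[A_t|\Ff_t],\E[C_t|\Ff_t]$, and $\Xi_t=0$ collapses to
\[
(\alpha_t-\rho_t)\,\E\Big[\int_t^T e^{\int_s^T\rho_r dr}(\alpha_s-\rho_s)\hat u_s^\FF\,ds\,\Big|\,\Ff_t\Big]+\psi_t(0)\E[\gamma_t(0)|\Ff_t]\lambda_t^B+\int_\Rr\psi_t(z)\E[\gamma_t(z)|\Ff_t]\lambda_t^H\,\nu(dz)=0.
\]

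The main obstacle is this last cancellation. In the full-information problem of Theorem \ref{teorem:G_mean_variance_optimal} the analogous identity holds because the forward-drift term and $\gamma$ are explicitly computable from the feedback form of $\hat u^\GG$ and cancel term by term; the delicate point here is that $A_t,C_t$ are $\Gg_t$- but not $\Ff_t$-measurable, so the $\FF$-adjoint is \emph{not} the $\Ff_t$-projection of the $\GG$-adjoint and the $\GG$-computation cannot simply be quoted. I would instead track the projection through the dynamics: for the $\GG$-martingale $N_t:=\E[\int_t^T e^{\int_s^T\rho_r dr}(\alpha_s-\rho_s)\hat u_s^\FF\,ds\,|\,\Gg_t]$ one has $dN_t=\int_\Rr\gamma_t(z)\,\mu(dt,dz)-e^{\int_t^T\rho_r dr}(\alpha_t-\rho_t)\hat u_t^\FF\,dt$, which links $\E[\gamma_t|\Ff_t]$ to $\E[N_t|\Ff_t]$ and yields the cancellation for the feedback \eqref{eq:hatu^FF_mean_variance}, mirroring the $\GG$-case verified through the ordinary differential equations for $A$ and $C$. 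It remains to record admissibility: $\hat u^\FF\in\Af$, the feedback equation for $\hat X$ inherits well-posedness from \eqref{OU}, and \eqref{:eq:optimization_integrability} is assumed, so Theorem \ref{teorem:optimal_control_FF} applies and $\hat u^\FF$ solves \eqref{eq:mean_variance_functional_FF}.
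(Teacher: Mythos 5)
Your reduction of the problem is the same as the paper's: invoke Theorem \ref{teorem:optimal_control_FF}, note that for this portfolio problem the projected Hamiltonian is affine in $u$ over $\UU=\RR$, and conclude that optimality of $\hat u^\FF$ amounts to the vanishing of the coefficient $\Xi_t$ of $u$ (this is exactly condition \eqref{eq:hamFF_to_optimize}). The genuine gap is in the other half of the argument: you never actually identify the adjoint pair $(\hat Y,\hat\phi)$ associated with the feedback control $\hat u^\FF$, and the route you sketch for doing so does not close. From \eqref{eq:Y_conditional} and the Ornstein--Uhlenbeck formula \eqref{OU} you can indeed split $\hat\phi_t(z)$ into an explicit term proportional to $\hat u^\FF_t\psi_t(z)$ plus an integrand $\gamma_t(z)$ coming from the martingale representation of $N_t=\E\big[\int_t^T e^{\int_s^T\rho_r\,dr}(\alpha_s-\rho_s)\hat u^\FF_s\,ds\,\big|\,\Gg_t\big]$; but $\gamma$ is defined only implicitly through Theorem \ref{Teorem:G_martingales}, because $\hat u^\FF_s$ for $s>t$ is a feedback functional of the future state. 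Your final step --- ``which links $\E[\gamma_t|\Ff_t]$ to $\E[N_t|\Ff_t]$ and yields the cancellation'' --- is an assertion, not an argument: the dynamics of $N$ do not determine $\gamma_t$ pointwise, there is no $\FF$-martingale representation theorem in this framework to fall back on (Remark \ref{remark:about_F} states that representation fails under $\FF$), and no rule is given relating the $\Ff_t$-projection of a $\GG$-representation integrand to anything computable. So the single identity on which the theorem rests, $\Xi_t=0$, is left unproven.

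The paper closes this gap with a device your sketch is missing: an explicit ansatz combined with BSDE uniqueness. One posits $\hat Y_t=\E[A_t|\Ff_t]\hat X_t+\E[C_t|\Ff_t]$ and $\hat\phi_t(z)=\E[A_t|\Ff_t]\,\hat u^\FF_t\,\psi_t(z)$, and first proves the projection differentiation rule $d\,\E[A_t|\Ff_t]=\E[a_t|\Ff_t]\,dt$ (likewise for $C$): writing $A_t=-\exp\{-\int_t^T J_s\,ds\}$, one factors out the $\Ff_{t+\Delta t}$-measurable piece $\exp\{\int_t^{t+\Delta t}J_s\,ds\}$ and lets $\Delta t\downarrow 0$, using right-continuity of $\FF$. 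With this rule, It\^o's formula shows the ansatz solves \eqref{eq:portfolio_BSDE} with terminal value $k-\hat X_T$, and uniqueness of BSDE solutions (Theorem \ref{teorem:BSDE_existence_uniqueness}) guarantees that it \emph{is} the adjoint pair --- no representation integrand ever needs to be computed. Once the adjoint is in this closed affine form, $\Xi_t=0$ follows by direct substitution of \eqref{eq:hatu^FF_mean_variance}, in exact parallel with \eqref{eq:Hamiltonian_repeated}--\eqref{eq:expression_u}. If you want to keep your structure, replace the $N_t$/$\gamma$ discussion by this ansatz-and-uniqueness verification, including the projection ODE lemma, which is the genuinely new technical point relative to the $\GG$-case; without it the proof is incomplete.
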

Apart from a technical point involving conditional expectations, the proof is similar to Theorem \ref{teorem:G_mean_variance_optimal}.
\begin{proof}%[Proof of Theorem \ref{teorem:Optimal_FF_mean_variance}]
Let $\hat{u}$ be given by \eqref{eq:hatu^FF_mean_variance}, \ie
\begin{equation*}
\hat{u}_t = - \frac{(\alpha_t-\rho_t) \Big( \E \big[ A_t \big| \Ff_t ] \hat{X}_t + \E \big[ C_t \big| \Ff_t ] \Big) }{ \E\big[ A_t \big| \Ff_t \big] \big(   | \psi_t(0) |^2 \lambda_t^B + \int_\Rr| \psi_t(z) | ^2 \ins \lambda_t^H \ins \nu(dz) \big) }.
\end{equation*} 
The adjoint equation is given by
\begin{align*}
\hat{Y}_t &= \E\big[ A_t \big| \Ff_t \big]  \hat{X}_t + \E\big[ C_t \big| \Ff_t \big], \\
\hat{\phi}_t(z) &= \E\big[ A_t \big| \Ff_t \big]  \hat{u}_t \psi_t(z),
\end{align*}
with $A$ and $C$ given by \eqref{eq:Aexplicit} and \eqref{eq:Cexplicit}. We can verify that $\hat{Y}$, $\hat{\phi}$ is indeed the solution of \eqref{eq:portfolio_BSDE} using the same arguments as in  the proof of Theorem \ref{teorem:G_mean_variance_optimal}. First we show that $d\E[A_t | \Ff_t ]= \E[a_t | \Ff_t ]\ins dt$ and $d\E[C_t | \Ff_t ]= \E[c_t | \Ff_t ] \ins dt$, where
\begin{align*}
\E[a_t | \Ff_t ]&= \Big( \frac{  (\alpha_t-\rho_t)^2   } {|\psi_t(0)|^2 \lambda_t^B+ \int_\Rr | \psi_t(z) |^2 \ins\lambda_t^H\ins \nu(dz)} - 2\rho_t \Big) \E\big[ A_t \big| \Ff_t \big], \\
\E[c_t | \Ff_t ]&= \Big( \frac{  (\alpha_t-\rho_t)^2   } {|\psi_t(0)|^2 \lambda_t^B+ \int_\Rr | \psi_t(z) |^2 \ins\lambda_t^H\ins \nu(dz)} - \rho_t \Big) \E\big[ C_t \big| \Ff_t \big].
\end{align*}
The argument is described for $A$, the case of $C$ is identical.
% We prove that indeed $d\E[A_t | \Ff_t ]= \E[a_t | \Ff_t ]\ins dt$, the argument with $C$ is identical. It is sufficient to show that $\frac{d}{dt} \E[A_t | \Ff_t ] = \E[a_t | \Ff_t ]$ $dt\times d\Prob$ a.e. 
Define
\begin{equation*}
 J_t := \frac{  (\alpha_t-\rho_t)^2   } {|\psi_t(0)|^2 \lambda_t^B+ \int_\Rr | \psi_t(z) |^2 \ins\lambda_t^H\ins \nu(dz)} - 2\rho_t. 
\end{equation*}
Remark that
\begin{equation*}
A_t = - \exp \big\{ -\int\limits_t^T J_s \ins ds \big\}. 
\end{equation*}
Thus, we have
\begin{align*}
\E \big[ A_{t+\Delta t} \big| \Ff_{t+\Delta t} \big] &= \E \Big[ - \exp \big\{-\int\limits_{t+\Delta t}^T J_s \ins ds \big\} \Big| \Ff_{t+\Delta t} \Big] \displaybreak[0] \\
&= \exp\big\{ \int\limits_{t}^{t+\Delta t} J_s \ins ds \big\}   \E \Big[- \exp \big\{ -\int\limits_{t}^T J_s \ins ds \big\} \Big| \Ff_{t+\Delta t} \Big].
\end{align*}
Hence, since $ \E \big[ \exp \big\{ -\int_{t}^T J_s \ins ds \big\} \big| \Ff_{t+\Delta t} \big] \to \E \big[ \exp \big\{ -\int_{t}^T J_s \ins ds \big\} \big| \Ff_t \big]$ $dt\times d\Prob$ a.e as $\Delta t \to 0$, we obtain
\begin{align*}
\lim_{\Delta t \to 0^+}  \frac{ \E\big[ A_{t+\Delta t} \big| \Ff_{t+\Delta t} \big] - \E\big[ A_t \big| \Ff_t \big]}{\Delta t} &= \lim_{\Delta t \to 0^+}  \frac{ \exp\big\{ \int_t^{t+\Delta t}  J_s \ins ds\big\} -1  }{\Delta t} \E[ A_t | \Ff_t ] \\
&= J_t \E[ A_t | \Ff_t ] = \E[ a_t | \Ff_t ].
\end{align*}
Following Theorem \ref{teorem:optimal_control_FF} we define
\begin{align*}
\Ham^\FF_t (\lambda,u,x,\hat{Y}_t,\hat{\phi}_t) =&\; \rho_t x \E \big[ \hat{Y}_t \big| \Ff_t \big] + u \Big\{ \big(\alpha_t-\rho_t) \E \big[ \hat{Y}_t \big| \Ff_t \big] \\
& + \psi_t(0)\E\big[ \hat{\phi}_t(0) \big| \Ff_t \big]  \lambda^B 
+ \int\limits_\Rr \psi_t(z) \E \big[  \hat{\phi}_t(z) \big| \Ff_t \big] \ins \lambda^H \nu(dz)  \Big\}.
\end{align*}
For $\hat{u}$ to be optimal, from \eqref{eq:h_t^FF}-\eqref{eq:u_FF_sup}, it is sufficient to show that 
\begin{equation}
 \big(\alpha_t-\rho_t) \E \big[ \hat{Y}_t \big| \Ff_t \big] + \psi_t(0)\E\big[ \hat{\phi}_t(0) \big| \Ff_t \big]  \lambda^B_t 
+ \int\limits_\Rr \psi_t(z) \E \big[  \hat{\phi}_t(z) \big| \Ff_t \big] \ins \lambda^H_t \nu(dz) = 0.
\label{eq:hamFF_to_optimize}
\end{equation}
Replacing $\hat{\phi}_t(z) = A_t \hat{u}_t \psi_t(z)$ and inserting \eqref{eq:hatu^FF_mean_variance} in \eqref{eq:hamFF_to_optimize}, we obtain the desired result.
\end{proof}

\subsection{Utility maximization of final wealth}
For the utility maximization problem of the final wealth we set 
\begin{equation*}
J(u) = E\big[ U(X_T) \big] 
\end{equation*}
where $U:\RR\to\RR$ is a differentiable utility function, increasing and concave. The BSDE is given by
\begin{equation*}
Y_t = U'(X_T) + \int\limits_t^T Y_{s\minus} \rho_s  \ins ds - \int\limits_t^T \int\limits_\RR \phi_s(z)\ins \mu(ds,dz) 
\end{equation*}
(where $U'(x) = \frac{d}{dx} U(x)$). % The function $x\to h_t(x)$ is concave for all $t\in[0,T]$. 
By arguments as in the mean-variance portfolio problem, the sufficient condition from the maximum principle for optimal $\hat{u}\in \Ag$ can be reduced to 
\begin{equation*}
(\alpha_t-\rho_t) \hat{Y}_t =  \psi_t(0) \hat{\phi}_t(0) \lambda^B_t + \int\limits_\Rr \big[ \psi_t(z) \hat{\phi}_t(z) \big] \ins \lambda^H_t \nu(dz),
\end{equation*}
or
\begin{align*}
(\alpha_t-\rho_t) & \Big( U'(\hat{X}_T)+\int\limits_t^T \hat{Y}_{s\minus} \rho_s \ins ds -\int\limits_t^T \int\limits_\Rr \hat{\phi}_s(z) \ins \mu(ds,dz) \Big) \\
&=  \psi_t(0) \hat{\phi}_t(0) \lambda^B_t + \int\limits_\Rr \big[ \psi_t(z) \hat{\phi}_t(z) \big] \ins \lambda^H_t \nu(dz),
\end{align*}
where $\hat{Y}$, $\hat{\phi}$ depends on $\hat{u}$.

\section{Acknowlegdements}

We thank the expert referee for his careful reading and detailed comments. We also thank Tusheng Zhang and Etienne Pardoux for their comments at the start of this work. The research leading to these results has received funding from the
European Research Council under the European Community's Seventh Framework
Programme (FP7/2007-2013) / ERC grant agreement no [228087].

\bibliographystyle{alpha}
\phantomsection
% \addcontentsline{toc}{section}{References}
\bibliography{referanser}

\end{document}